\newtheorem{theorem}{Theorem}[section]
\newtheorem{lemma}[theorem]{Lemma}
\newtheorem{corollary}[theorem]{Corollary}
\theoremstyle{definition}
\newtheorem{definition}[theorem]{Definition}
\newtheorem{example}[theorem]{Example}
\newtheorem{problem}[theorem]{Problem}
\theoremstyle{remark}
\newtheorem{remark}[theorem]{Remark}
\numberwithin{equation}{section}
\begin{document}

\title[Extension problems]
      {Exterior extension problems for strongly elliptic operators: solvability and approximation using fundamental solutions} 

\author[V. Kalinin]{Vitaly Kalinin}
\address{EP Solutions SA, 
Avenue des Sciences 13, 1400 Yverdon-les-Bains, Switzerland}

\email{contact@ep-solutions.ch}
			
\author[A. Shlapunov]{Alexander Shlapunov}
\address{Siberian Federal University,
         Institute of Mathematics and Computer Science,
         pr. Svobodnyi 79,
         660041 Krasnoyarsk,
         Russia}
\email{ashlapunov@sfu-kras.ru}
				
\address{Sirius Mathematics Center
Sirius University of Science and Technology, Olimpiyskiy ave. b.1, 354349 Sochi,
         Russia}

\email{shlapunov.aa@talantiuspeh.ru}

\subjclass [2010] {Primary 35J56; Secondary 35J57, 35K40}

\keywords{Strongly elliptic operators, the Cauchy problem, the continuation problem, the 
single layer, the boundary elements method, the fundamental solution method}

\begin{abstract}
In this work we study three exterior extension problems for strongly elliptic partial 
equations: the Cauchy problem (in a special statement), the "analytical"{} continuation 
problem and the so called "inner"{} Dirichlet problem in the scale of the Sobolev spaces 
over a domain with relatively smooth boundaries. We consider the existence of solutions to 
these problems, the dense solvability and conditional well-posedness of these problems for a 
wide class of strongly elliptic systems. We also consider the approximation of solutions to 
these problems by a single layer potential and by a linear combination of "discrete"{} 
fundamental solutions in relation to a narrower class of strongly elliptic operators of the 
second order. The obtained results justify the applicability of the indirect method of 
boundary integral equations and  for numerical solving the 
exterior extension 
problems.
\end{abstract}

\maketitle

\section*{Introduction}
\label{S.0}

In this paper we consider three interconnected problems for elliptic partial differential 
equations: a special case of the Cauchy problem with a doubly connected boundary of the 
solution domain, the "analytical"{} continuation problem and a problem that can be named the 
"inner"{} Dirichlet problem. These problems can be classified as external extension problems.

Namely, let $\Omega_0$  and $\Omega_1$ be bounded domains in ${\mathbb R}^n$, $n \geq 2$,  
with sufficiently smooth boundaries $\partial \Omega_0$ and $\partial \Omega_1$, such that 
$\overline \Omega_0 \subset \Omega_1$ and let $L$ be an elliptic  
linear  partial differential operator of order $2m$, $m \in \mathbb N$, with real analytic coefficients 
in a neighbourhood of $\overline \Omega_1$. We also fix a boundary Dirichlet system 
$\{B_j\}_{j=0}^{2m-1}$ of order $(2m-1)$ on $\partial \Omega_0$, see Definition \ref{def.Dir} 
below; of course the standard system $\{ \frac{\partial^j}{\partial\nu^j}\}_{j=0}^{2m-1}$ of  
the normal derivatives with respect to $\partial \Omega_0$ belongs to this class.
Let us roughly formulate the problems, leaving the specification of function spaces 
for the next section.

The first one is the Cauchy problem for $L$.

\begin{problem} \label{eq:problem1}
Given data $u_0, \dots u_{2m-1}$ on $\partial \Omega_0$, 
find a solution $u$ to the homogeneous equation $Lu = 0$ in $\Omega_1 \setminus 
\overline \Omega_0$ satisfying in a suitable sense $B_j u = u_j$ on $\partial \Omega_0$, 
for all $0\leq j \leq 2m-1$.
\end{problem}  

The second one is the problem of the "analytic"{} continuation from a bounded domain to a 
large one.

\begin{problem} \label{eq:problem2}
Given data $V$ in $\Omega_0$ satisfying the 
equation $LV = 0$ in  $\Omega_0$ 
find a solution $U$ to  the homogeneous equation $LU = 0$ in  $\Omega_1$  
such that $U = V$ in  $\Omega_0$.
\end{problem}  

The third one is the so-called "inner Dirichlet problem"{}

\begin{problem} \label{eq:problem3}
Given data $v_0, \dots v_{m-1}$ on $\partial \Omega_0$ 
find a solution $U$ to  the homogeneous equation $LU = 0$ in  $\Omega_1$  
such that $B_j U = v_j$ on  $\partial \Omega_0$ for all $0\leq j \leq m-1$.
\end{problem}  

A typical example of Problem \ref{eq:problem1} is the 
Cauchy problem for the Laplace equation that were initially considered by Jacques Hadamard 
\cite{Had1923} as a famous example of an ill-posed problem. Since 
then the Cauchy problem for 
elliptic equations has been actively studying in various aspects, see, for instance, works by 
 S.N. Mergeljan \cite{Me56}, E.M. Landis \cite{Lan56} , M.M. Lavrent’ev \cite{Lv1}, 
R. Latt\'es and J.-L. Lions \cite{LatLi69}, 
L.E. Payne \cite{Pay1970},  V.A. Kozlov, V.G. Maz’ya and A.V. Fomin \cite{KMF91}, 
G. Alessandrini with co-authors \cite{AlRRV}  and others. 
 It appears that the regularization methods 
(see, for instance, \cite{TikhArsX}, \cite{LvSa}) are most effective for  
studying the problem.  The book \cite{Tark36} 
gives a rather full description of solvability conditions for the homogeneous 
elliptic equations in the Sobolev spaces. Actually, 
the Cauchy problem for elliptic equations and systems of equations arises in many fields 
of science and technology, including geophysics, elasticity theory (see, 
for instance, \cite{LvRShi}) and 
medical diagnostics (in particular, non-invasive electrocardiographic imaging \cite{Ramanathan2004}).

Problem  \ref{eq:problem2} initially arose as the analytical continuation problem in the 
theory of functions of a complex variable and it was subsequently generalized to harmonic 
functions and functions governed by a wide class of elliptic equations in ${\mathbb R}^n$. 
The long-time history of the study of various aspects of this problem includes, in 
particular, works of T. Carleman \cite{C33}, N. Aronszajn \cite{Ar56}, 
F.~E. Browder \cite{browder1962approximation}, B.M. Weinstock \cite{weinstock1973uniform}.  
A deep connection between Problems \ref{eq:problem1} and \ref{eq:problem2} was exploited in 
the paper \cite{ShTaLMS}, see also book by N. Tarkhanov \cite{Tark36}. 
The issues related to theoretical justifications of numerical solving of this problem have not been sufficiently studied. We can mention only one recent paper on quantifying the ill-conditioning of the classical 
analytic continuation problem, see L.N. Trefethen \cite{Tref2020}.

Problem \ref{eq:problem3} has an independent applied significance, since it arises in various fields of science and technology.
In particular, is related to non-contact cardiac electrical mapping with the basket intracardiac catheter \cite{grace2019high}. 

Moreover, Problems \ref{eq:problem1} and \ref{eq:problem2} can be reduced to Problem \ref{eq:problem3}.  This observation opens a way to construct numerical algorithms to obtain the Problem \ref{eq:problem1} solution and \ref{eq:problem2} via solving Problem \ref{eq:problem3}.

Besides that, Problem \ref{eq:problem3} underlies the following special "extension" approach to numerical solving classical boundary value problems for elliptic equations. In this paper, we will take the Dirichlet problem for the second order strongly elliptic operators as a model example of such a classical problem. The Dirichlet problem reads as follows:

\begin{problem} \label{eq:problem4}
Let $m=1$. Given data $w_0$ on $\partial \Omega_0$, 
find a solution $w$ to the 
$Lw = 0$ in $\Omega_0$ satisfying in a 
suitable sense $B_0 w = w_0$ on $\partial \Omega_0$.
\end{problem}  

The "extension"{} approach for solving Problem \ref{eq:problem4} consists in setting a 
"virtual"{} arbitrary boundary $\partial \Omega_1$ so that domain $\Omega_1$ bounded to it 
includes the closure of domain $\partial \Omega_0$: $\overline \Omega_0 
\subset \Omega_1$. A solution $w$ to Problem \ref{eq:problem4}  in $\Omega_0$  is approximated 
by restricting to $\Omega_0$ the  Problem \ref{eq:problem3} solution for the same elliptic operator $L$ and the same boundary datum $f_0=w_0$ on $\partial \Omega_0$ (i.e. $L u = 0 $ in  $ \Omega_1$, $ u=w_0 $ 
on $ \partial \Omega_0$).

Actually, the "extension"{} strategy is used relatively widely for numerical solving 
elliptic boundary value problems. For example, in relation to the finite element method the
"extension"{} approach is known as the fictitious domain method \cite{glowinski1994fictitious}
\cite{babuvska2005new},  \cite{agress2021smooth}.
The same approach is also used in the method of fundamental solutions, which will be 
discussed below.

The "extension"{} approach has some advantages. In particular, it allows a certain freedom 
of choice of geometry and smoothness of the prescribed embracing boundary 
$ \partial \Omega_1$. However, the well-posedness of boundary value Problem \ref{eq:problem4} 
does not imply the well-posedness for Problem \ref{eq:problem3}. Therefore, the applicability 
of the "extension"{} method needs a special justification which is provided in the presented paper.

The main objective of this paper is to consider a way to solve Problem \ref{eq:problem3} using the concept of fundamental solutions. 
We will consider this issue in several aspects: a) as an actual method for solving Problem \ref{eq:problem3} itself ; b) as a method for solving Problem \ref{eq:problem1} and Problem \ref{eq:problem2} which are reduced to Problem \ref{eq:problem3}; c) as a method for solving Problem \ref{eq:problem4} via the "extension" approach.

 We will focus on two methods of solving problem \ref{eq:problem3}. 
The first method is based on the representation of solution $u$ to Problem \ref{eq:problem3} in 
$\Omega_1$ by a potential of the single layer defined on boundary $ \partial \Omega_1$:
\begin{equation} \label{eq:sl}
u(x) = \int_{\partial \Omega_1} \Phi (x,y) v(y)d s_y, \, x \in \Omega_1, \, y 
\in \partial \Omega_1 ,
\end{equation}
where $\Phi (x,y)$ is the fundamental solution for operator $L$ and $v$ is the single layer 
density. 

The proposed method can be considered in the context of the classical boundary integral 
equation method, which is based on the representation of the boundary value problems 
solutions by the single layer or (and) double layer potential. Its numerical realization is 
known as the boundary element method (BEM) (see, for instance, \cite{rjasanow2007fast} or 
elsewhere). 

The single layer version of BEM is the simplest for numerical implementation, 
however, it requires some caution. In certain cases, despite the fact that a boundary value 
problem has a unique solution, the integral equation for the single layer density may be 
unsolvable (some examples are given in section \ref{S.3} of this paper). 
Moreover, the solvability of the equation depends on the specific type of fundamental solution.

The study of the solvability of the boundary integral equations has a long history, 
including influential works of S.G. Mikhlin \cite{Mikhl65},  A.P. Calder\'on and A. 
Zygmund (see, for instance,  \cite{Strain_1998}), 
M. Costabel \cite{Cost88}, M. Costabel  and  W. L. Wendland \cite{CoWe86}.
Note that the intergal equation method was mainly studied in relation to classical 
boundary value problems, such as the Dirichet and Neumann problems. Its application 
to Problem \ref{eq:problem3} needs to be considered separately.

The second method for solving Problem \ref{eq:problem3} taken into our consideration
is known as the method of fundamental solutions (MFS).
It consists in approximating the solution of an elliptic equation in a bounded domain by a 
linear combination of fundamental solutions, whose singularities belongs to a discreet set of 
the isolated points of the prescribed boundary embracing the domain. More precisely, the solution  $u$ to  
Problem \ref{eq:problem3} in $\Omega_1$ is approximated as follows:

\begin{equation} 
\label{eq:mfs}
u \approx 
{u}_N(x) = \sum_{j=1}^{N}  
\Phi(x,y_j) v_j,\ x \in \Omega,\ y_j \in \{y_j\}_{j=1}^{N} 
\subset \partial \Theta_{ex} 
\end{equation}
where $v_j$ are real weight vector coefficients, $\partial \Theta_{ex}$ 
 is the "embracing"{} boundary, i.e., it is a boundary of a domain $\Theta_{ex}$  such that
$\overline \Omega_1\subset \Theta_{ex}$.

MFS can be viewed as a discrete version of the single layer potential method 
\cite{fairweather1998method}. MFS has a number of attractive properties (see the discussion 
of its advantages in \cite{smyrlis2009applicability}). For instance, it does not require 
numerical integration. Therefore, MFS is widely used for numerical  solving of classical boundary 
value problems for partial differential equations arising in various fields 
(see \cite{cheng2020overview} and the literature cited there). 
To justify the applicability of MFS to solving problems for an elliptic equation in a given 
domain, it is necessary to show the existence of real coefficients $\{v_j\}_{j=1}^{N}$, 
$j,N \in \mathbb N$, so that red (finite or infinite) sums of type 
\eqref{eq:mfs}
are dense in a space of solutions to the equation in the solutions domain.
This fact has been shown for some strong elliptic equations including the Laplace, Helmholtz, 
biharmonic and polyharmonic equation (using various functional spaces) subject to some 
smoothness requirements of the boundary geometry and boundary condition (see \cite{Bogo85}, 
\cite{smyrlis2009density}, \cite{smyrlis2009applicability} , \cite{smyrlis2011} and the 
literature cited in \cite{cheng2020overview}). 
The question of further generalization of these results remains open.

Some other results adjoin this topic. The completeness of the set of the discrete fundamental 
solutions with a different localization of their singularities, namely: $\Phi(x_i,y)$, $x_i \in \partial \Omega_0$, 
$y \in  \Omega_2 \setminus \overline \Omega_1$, 
$\overline \Omega_0\subset \Omega_1$, $\overline \Omega_1\subset \Omega_2$
were studied by A.B. Bakushinsky \cite{bakushinskii1970remarks}, 
M.A. Alexidze \cite{aleksidze1991fundamental}, cf. also the pioneer  result 
\cite{R1885} by C. Runge for holomorphic functions where the Cauchy kernel was used,  
or 
 \cite[Theorem 4.2.1]{Tark97}
 for elliptic differential operators admitting left fundamental 
solutions. It was also shown that the linear combination of fundamental solutions with 
singularities in an arbitrary open set $U$ outside the closure of a connected domain 
$\Omega$ are dense (in the sense of uniform norm) in the space 
$X=\{u\in C^m (\Omega): \, Lu =0 \mbox{ in } \Omega \}\cap C (\overline \Omega)$,  
see \cite{browder1962approximation}. This result has been significantly expanded in works 
\cite{weinstock1973uniform}, \cite{Tark36}, \cite{smyrlis2011}.  However, when singularities 
of the fundamental solution belong only boundary $\partial \Omega$ 
of domain $\Omega$, this statement not always valid \cite{smyrlis2009density}.

In addition, we note that the single layer integral equation method (see\cite{LvSa}, \cite{LiuZh2013}, \cite{BorCh2016})  and MFS (see  \cite{wang2006application} and \cite{ZhWe2008}) have been used for numerical solving the Causchy problem for the Laplace equation 
 in a domain with the doubly-connected boundary (similar to Problem \ref{eq:problem1}).  In these works the single layer density was given on both boundaries $ \partial \Omega_0$ and $ \partial \Omega_1$ and the 
fundamental solutions singularities were given on the two  closed surfaces 
$\partial \Theta_{in}$ and $\partial \Theta_{ex}$ such that $\overline \Theta_{in} \subset 
\Omega_0$, $\overline \Omega_1  \subset \Theta_{ex}$.
In this paper, we do not consider the justification of these approaches focusing on the proposed method of reduction of the Cauchy problem to Problem \ref{eq:problem3} which assumes the single layer density or the fundamental solution singularities  to be given only on one surface.

The paper is organized as follows. Section \ref{s.math.prelim} is devoted to 
mathematical preliminaries such as the notations and the used function spaces. 
In section \ref{S.2}, we present some results on solvability and conditional 
well-posedness of extension Problems \ref{eq:problem1}, \ref{eq:problem2}, \ref{eq:problem3} 
for strongly elliptic systems in the form of the generalized Laplacian in bounded domains 
in ${\mathbb R}^n$, $n\geq 2$, with relatively smooth boundaries, considering 
their solutions in Sobolev-Slobodetsky spaces. In section \ref{S.3} we investigate the 
representation and approximation of solutions to the strongly elliptic systems of the second 
order by the single layer potential and a weighted sum of the fundamental solutions. 
In section \ref{S.4} we provide some results justifying the applicability of the single layer and MFS 
methods for approximation of solutions to Problems \ref{eq:problem1}--\ref{eq:problem4} 
for the strongly elliptic systems of the second order.

\section{Mathematical preliminaries}
\label{s.math.prelim}

Let $\theta$ be a measurable set in ${\mathbb R}^n$, $n\geq 2$.
Denote by $L^2(\theta)$ a Lebesgue space of functions on  $\theta$ with the 
standard inner product $\left( u,v \right)_{L^2(\theta)}$. 
If $D$ is a domain in ${\mathbb R}^n$ with a Lipschitz  
boundary $\partial D$, then for $s \in \mathbb{N}$ we denote by $H^s(D)$ 
the standard Sobolev space with the standard inner product $\left( u,v \right)_{H^s(D)} $, 
see, for instance, \cite{Ad03}. 
It is well-known that this scale extends for all real $s>0$. More precisely, 
given any non-integer $s > 0$, we use  
the so-called Sobolev-Slobodetskii space $H^s (D)$,  
see \cite{Slob58}. We denote also by $H^s_0 (D)$ the closure of the subspace 
$C^{\infty}_{\mathrm{comp}} (D)$ in $H^{s} (D)$, where $C^{\infty}_{\mathrm{comp}} (D)$ is 
the linear space of functions with compact supports in  $D$. 
Then the scale of Sobolev spaces can be extended for negative smoothness indexes, too. 
Namely, for $s > 0$, the space $H^{-s} (D)$
can be identified with the dual of $H^s_0 (D)$ with respect to the pairing induced by 
$(\cdot, \cdot)_{L^2 (D)}$, see, for instance, \cite{Ad03}.

If the boundary $\partial D$ of the domain $D$ is sufficiently smooth, 
then, using the standard volume form $d\sigma$ on the hypersurface $\partial D$ induced from 
${\mathbb R}^n$, we may consider the Sobolev-Slobodeckij spaces  $H^s(\partial D)$ on 
$\partial D$, see, for instance, \cite{Ad03}. 

Recall that a 
linear (matrix) differential operator 
$$
A (x,\partial) = \sum_{|\alpha|\leq m} A_\alpha (x) \partial^\alpha
$$  
 of order $m$ and with $(l\times k)$-matrices $A_\alpha (x) $ having 
entries from $C^\infty (X)$ on an open set $X \subset {\mathbb R}^n$, 
is called an operator  with injective (principal) symbol on $X$ 
if $l\geq k$ and for its principal symbol
$$
\sigma(A) (x,\zeta) = \sum_{|\alpha|= m} A_\alpha (x) \zeta^\alpha
$$ 
we have 
$
\mathrm{rang} \, ( \sigma(A) (x,\zeta) )=k \mbox{ for any } x\in X , \zeta \in {\mathbb R}^n 
\setminus \{0\} 
$. 
An operator $A$  is called (Petrovsky) elliptic, if $l=k$ and 
its symbol is injective.

An operator $L (x,\partial)$ is called strongly elliptic if 
it is elliptic, its order $2m$ is even and there is a positive constant $c_0$ such that 
\begin{equation} \label{eq.str.ell}
(-1)^{m}\Re{\, (w^*\sigma(L)} (x,\zeta) \,w)\geq c_0 |\zeta|^{2m} |w|^2
\mbox{ for any } x\in X , \zeta \in {\mathbb R}^n , 
w \in {\mathbb C}^k
\end{equation}
where $w^* = \overline w^T$ and $w ^T$ is the transposed vector 
for $w \in {\mathbb C}^k$.

In fact, \eqref{eq.str.ell} yields that the so-called G\aa{}rding inequality: 
\begin{equation} \label{eq.Garding}
\|u\|^2_{[H^m (X)]^k}  \leq c_1 \Re{((Lu,u)_{[L^2 (X)]^k}}) + 
c_2 \|u\|^2_{[L^2 (X)]^k} 
\mbox{ for all } u \in [H^m_0 (X)]^k
\end{equation}
with some positive constants $c_1$, $c_2$ independent on $u$.

Actually, if the principal symbol of $A$ is injective then  
the operator $A^*A$ is strongly elliptic of order $2m$ where
$$
A^* (x,\partial) = \sum_{|\alpha|\leq m} (-1)^{|\alpha|} \partial^\alpha (A^*_\alpha (x) \cdot)
$$
is the formal adjoint for $A$ with the adjoint matrices $A^*_\alpha (x) $. 
The typical operator of such type is the Laplacian 
$
-\Delta = \nabla^*\nabla = - \nabla \cdot \nabla.
$ 
For the generalized Laplacians $A^*A$ {G\aa{}rding inequality \eqref{eq.Garding} 
is equivalent to the following:
\begin{equation} \label{eq.Garding.Laplace}
\|u\|^2_{[H^m (X)]^k}  \leq c_1 
\|A u \|^2_{[L^2 (X)]^k} + 
c_2 \|u\|^2_{[L^2 (X)]^k} 
\mbox{ for all } u \in [H^m_0 (X)]^k
\end{equation}
with some positive constants $c_1$, $c_2$ independent on $u$.

Next, for a domain $D\subset X$ denote by $S_A (D)$ the space of solutions 
to the equation 
$
A u = 0 \mbox{ in } D
$ 
in the sense of distributions. If the principal symbol of $A$ is injective
then elements of $S_A (D)$ 
are actually $C^\infty$-smooth (vector-)functions in $D$.

We say that a differential operator $Q$ satisfies 
the so called \textit{Uniqueness condition in the small} on $X$ or 
\textit{the Unique Continuation Property} if
\begin{itemize}
\item[(US)]
for any domain $D\subset X$ if a solution $u \in S_{Q} (D)$ vanishes 
on an open subset $G\subset D$ then it is identically zero in $D$.
\end{itemize}
In the sequel we assume that the both operators $L$ and $L^*$ under the consideration 
possess the Unique Continuation Property (US)  (in particular, we assume 
that the generalized Laplacian $A^*A$ possesses this 
property).  
This assumption provides that the operator admits a bilateral 
fundamental solution, say $\Phi$,  see \cite[\S 2.3]{Tark95a}. As it is known 
the property (US) holds true for any elliptic operator 
with real analytic coefficients. Also scalar second order operators 
with $C^1$ smooth coefficients  possess the Unique Continuation Property, 
see \cite{Ar56}, \cite{Cor56}, \cite{Lan56}. Actually, for  
general elliptic operators with smooth coefficients the absence of  
multiple complex characteristics is essential for the property (US).

However both the property (US) and 
the real analyticity of the coefficients are too excessive for the existence 
of the fundamental solution; for instance, a bilateral fundamental solution 
exists on a domain $D\subset X$ for the operator $A^*A$ if the operator $A$ has injective 
principal symbol and satisfies (US) on $X$. Actually, in this case the Green function 
of the Dirichlet problem for the Laplacian $A^*A$ in $D$ is a bilateral 
fundamental solution for it. 

More precisely, we begin with a definition.

\begin{definition} \label{def.Dir}
A set of linear differential 
operators $\{B_0,B_1, \dots B_{m-1}\}$ is called a 
$(k\times k)$-Dirichlet system of order $(m-1)$ 
on $\partial D$ if: 
1) the operators are defined in a neighbourhood of $\partial D$; 
2) the order of the differential operator $B_j $ equals to $j$; 
3) the map $ \sigma (B_j) (x,\nu (x)) :{\mathbb C}^k \to {\mathbb C}^k$ 
is bijective for each $x \in \partial D$, where 
$\nu (x)$ will denote the outward normal vector to the hypersurface $\partial D$
at the point $x\in \partial D$.
\end{definition}
A typical $(k\times k) $-Dirichlet system of order $(m-1)$ on $\partial D$
consists of 
$$
\{I_k, I_k \frac{\partial}{\partial\nu}, \dots, 
I_k \frac{\partial^{m-1}}{\partial\nu^{m-1}} \}
$$ 
where 
$I_k$ is the unit $(k\times k)$-matrix and $\frac{\partial^j}{\partial\nu^j} $ 
is $j$-th normal derivative with respect to $\partial D$. 
According to the Trace Theorem, see for instance 
\cite[Ch.~1, \S~8]{LiMa72} and \cite{McL00}, 
if $\partial D\in C^{s}$, $s\geq m\geq 1$ then 
each operator $B_j$ induces a bounded linear operator
\begin{equation} \label{eq.trace.B_j}
B_j: H^s (D) \to H^{s-j/2} (\partial D).
\end{equation}

The main advantage of the use of the Dirichlet system is the following lemma.

\begin{lemma} \label{l.Dir.right}
Let $\partial D\in C^s$, $s\geq m$ and 
$B=\{B_0,B_1,\dots, B_{m-1}\}$ be a Dirichlet system of order $(m-1)$ 
on $\partial D$. Then for each set $\oplus_{j=0}^{m-1} 
u_j \in \oplus_{j=0}^{m-1} [H^{s-j-1/2} (\partial D)]^k$
there is a function $u \in  [H^s (D)]^k$ such that 
\begin{equation} \label{eq.Dir.right}
\oplus_{j=0}^{m-1} B_j u =\oplus_{j=0}^{m-1} u_j \mbox{ on }\partial D.
\end{equation}
\end{lemma}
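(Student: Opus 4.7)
The plan is to reduce this to the classical trace extension theorem for the standard Dirichlet system of normal derivatives, which is already available for $C^s$ boundaries with $s\geq m$ (see, e.g., \cite[Ch.~1, \S~8]{LiMa72}). Namely, one knows that for any collection $\oplus_{j=0}^{m-1} v_j \in \oplus_{j=0}^{m-1} [H^{s-j-1/2}(\partial D)]^k$ there exists $U\in [H^s(D)]^k$ with $\partial^j U/\partial\nu^j = v_j$ on $\partial D$, and the right inverse can be chosen to be linear and continuous. The task is therefore to re-express the data $\oplus_{j=0}^{m-1} u_j$ for the system $B$ as data $\oplus_{j=0}^{m-1} v_j$ for the standard system, with the same Sobolev regularity, and then invoke the classical theorem.

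The key step is a local decomposition near $\partial D$. Working in a tubular neighborhood and in local coordinates adapted to the boundary, each operator $B_j$ of order $j$ may be written in the form
\begin{equation*}
B_j = \sum_{i=0}^{j} C_{j,i}(x, D_\tau)\, \frac{\partial^i}{\partial\nu^i},
\end{equation*}
where $C_{j,i}$ is a tangential matrix-valued differential operator of order $j-i$ with smooth coefficients, and the leading coefficient satisfies $C_{j,j}(x) = \sigma(B_j)(x,\nu(x))$. By the Dirichlet hypothesis, each $C_{j,j}(x)$ is a bijective $(k\times k)$-matrix depending smoothly on $x\in\partial D$, and hence $C_{j,j}^{-1}$ is smooth as well.

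With this decomposition, the relations $B_j u = u_j$ on $\partial D$ become a triangular linear system for the unknown normal traces $v_i = \partial^i u/\partial\nu^i|_{\partial D}$. It is solved recursively by
\begin{equation*}
v_0 = \sigma(B_0)(x,\nu)^{-1} u_0, \qquad v_j = \sigma(B_j)(x,\nu)^{-1} \Bigl( u_j - \sum_{i=0}^{j-1} C_{j,i}(x, D_\tau)\, v_i \Bigr).
\end{equation*}
By induction each $v_j$ belongs to $[H^{s-j-1/2}(\partial D)]^k$: the tangential operator $C_{j,i}(x,D_\tau)$ of order $j-i$ maps $H^{s-i-1/2}(\partial D)$ continuously into $H^{s-j-1/2}(\partial D)$, and pointwise multiplication by the smooth inverse of $\sigma(B_j)(x,\nu)$ preserves this space. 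Applying the standard trace extension to the collection $\oplus_{j=0}^{m-1} v_j$ produces the required function $u\in [H^s(D)]^k$, and the composition gives a bounded linear right inverse of $\oplus_{j=0}^{m-1} B_j$.

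The main obstacle is the second step, i.e., the rigorous local decomposition of a differential operator of order $j$ into normal-plus-tangential components with the coefficient of $\partial^j/\partial\nu^j$ identified precisely as $\sigma(B_j)(x,\nu(x))$; this requires working in a collar neighborhood with coordinates adapted to $\partial D\in C^s$ and carefully tracking how the principal symbol enters. Once that decomposition is in place, the triangular inversion and the Sobolev bookkeeping are routine and the result reduces to the classical trace theorem.
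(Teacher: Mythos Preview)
Your proposal is correct and is exactly the standard argument underlying the cited reference: the paper does not give its own proof but simply refers to \cite[Lemma~5.1.1]{Roit96}, whose proof proceeds precisely by your triangular reduction to the classical trace extension for normal derivatives. Your sketch thus supplies the content the paper omits, and there is nothing to add beyond the regularity bookkeeping you already flag (the collar coordinates for $\partial D\in C^s$ give only finitely smooth coefficients $C_{j,i}$, but this suffices for the tangential mapping properties on $H^{s-i-1/2}(\partial D)$ since $s\geq m\geq j$).
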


\begin{proof} See, for instance, \cite[Lemma 5.1.1]{Roit96}.
\end{proof}

Next we need the 
following useful (first) Green formula. 

\begin{lemma} \label{eq.dual.Dir}
Let $m\in \mathbb N$, 
$\partial D\in C^m$,  $A$ be an differential operator 
with injective symbol of order $m \in \mathbb N$ in a neighbourhood of 
$\overline D$ and $B=\{B_0,B_1, \dots B_{m-1}\}$ be a Dirichlet system of order $(m-1)$ on $
\partial D$. Then there is a  Dirichlet system ${\tilde B }^A=\{\tilde B_0^A, \tilde 
B_1 ^A,\dots 
\tilde B_{m-1}^A \}$ on $\partial D$ such that 
for all $v \in [H^{m} (D)]^k$, $ u\in  [H^m (D)]^k$ we have  
\begin{equation} \label{eq.Green.M.B}
\int_{\partial D} \Big( \sum_{j=0}^{m-1}({\tilde B}^A_{m-1-j} v)^* B_j u 
\Big) d\sigma = \int_{D} \Big( v^*  A u - (A^* v)^{*} u  \Big) dx.
\end{equation}
\end{lemma}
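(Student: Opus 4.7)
The plan is to derive the Green formula first for smooth $u,v$ by integrating by parts term-by-term, then rewrite the resulting boundary bilinear form in the basis provided by the Dirichlet system $B$, and finally verify that the operators produced on the $v$-side form a Dirichlet system of order $m-1$. Throughout, I would work with $C^\infty(\overline D)$-data and extend to $[H^m(D)]^k$ at the end by density together with the bounded trace maps \eqref{eq.trace.B_j}.

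For the first step, write $A(x,\partial)=\sum_{|\alpha|\le m} A_\alpha(x)\partial^\alpha$ and push each monomial $v^* A_\alpha\partial^\alpha u$ to $((-1)^{|\alpha|}\partial^\alpha(A_\alpha^*v))^* u$ by integrating by parts $|\alpha|$ times. At each step one picks up a boundary term whose derivative orders in $u$ and $v$ add up to $|\alpha|-1\le m-1$. Collecting all contributions and grouping by the order $j$ of normal differentiation falling on $u$, one obtains
\begin{equation*}
\int_D\!\bigl(v^*Au-(A^*v)^* u\bigr)dx
=\int_{\partial D}\sum_{j=0}^{m-1}(N_{m-1-j}v)^*\,\partial_\nu^{\,j} u\,d\sigma,
\end{equation*}
where $N_{m-1-j}$ is a boundary differential operator of order $m-1-j$ whose principal symbol at $\nu(x)$ is determined algebraically by $\sigma(A)(x,\nu(x))$ (the tangential derivatives of $u$ that arise along the way are moved onto $v$ by integration by parts on the closed manifold $\partial D$).

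For the second step, because $\{B_j\}_{j=0}^{m-1}$ is a Dirichlet system of order $m-1$, the bijectivity of $\sigma(B_j)(x,\nu(x))$ yields a triangular representation
\begin{equation*}
\partial_\nu^{\,j} u\big|_{\partial D}=\sum_{l=0}^{j}C_{jl}(x,\partial_\tau)\,B_l u\big|_{\partial D},
\end{equation*}
with tangential differential operators $C_{jl}$ and invertible diagonal $C_{jj}=\sigma(B_j)(x,\nu)^{-1}$. Substituting this into the formula of the previous step and transferring the tangential operators onto the $v$-factor by parts on $\partial D$ (no boundary contributions arise since $\partial D$ is closed), one gets \eqref{eq.Green.M.B} with operators $\tilde B^A_{m-1-j}$ of order $m-1-j$ uniquely produced by the rearrangement.

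The main obstacle is verifying that $\{\tilde B^A_0,\dots,\tilde B^A_{m-1}\}$ is actually a Dirichlet system, i.e.\ that $\sigma(\tilde B^A_{m-1-j})(x,\nu(x))$ is bijective at every boundary point. By construction its principal symbol equals $\sigma(N_{m-1-j})(x,\nu)\,\bigl(\sigma(B_j)(x,\nu)\bigr)^{-*}$ up to sign, so the bijectivity reduces to that of $\sigma(N_{m-1-j})(x,\nu)$. The latter is read off the explicit integration-by-parts formula and, as in the standard derivation of the Green identity for operators with injective symbol (see, e.g., \cite[Ch.~1, \S~8]{LiMa72} or \cite[\S 2.3]{Tark95a}), is controlled by $\sigma(A)(x,\nu(x))$; the assumed injectivity of $\sigma(A)$ yields the required bijectivity. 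Once this symbolic verification is done, \eqref{eq.Green.M.B} is extended from $C^\infty(\overline D)$ to $[H^m(D)]^k$ using the density of $C^\infty(\overline D)$ in $H^m(D)$ and the continuity of the traces $B_j$ and $\tilde B^A_{m-1-j}$ guaranteed by \eqref{eq.trace.B_j}.
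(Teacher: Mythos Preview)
The paper does not give its own proof here; it simply cites \cite[Lemma 8.3.3]{Tark97}. Your sketch is precisely the standard argument behind that reference: integrate by parts to produce a boundary bilinear form in the normal derivatives $\partial_\nu^{\,j}u$, change basis to the given Dirichlet system $B$ via the triangular relation, push the tangential operators onto the $v$-factor by integration by parts on the closed manifold $\partial D$, and finally extend from $C^\infty(\overline D)$ to $H^m$ by density and the trace theorem. So you are essentially reproducing the cited proof rather than offering an alternative.

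One point deserves more care. Your claim that ``the assumed injectivity of $\sigma(A)$ yields the required bijectivity'' of $\sigma(\tilde B^A_{m-1-j})(x,\nu)$ is only correct when $A$ is square ($l=k$): the symbol of the zeroth-order operator on the $v$-side is essentially $\sigma(A)(x,\nu)^*$, and for a genuinely overdetermined $A$ with $l>k$ this map $\mathbb{C}^l\to\mathbb{C}^k$ is surjective but not injective, so $\tilde B^A$ would fail to be a Dirichlet system in the sense of Definition~\ref{def.Dir}. Note, however, that the lemma as stated takes $v\in[H^m(D)]^k$, which already forces $l=k$ for the pairing $v^*Au$ to make sense, and in the rest of the paper the formula is only invoked with the square operator $L$ in place of $A$. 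Under that reading your argument goes through; just be explicit that injectivity of a square symbol is the same as bijectivity, rather than suggesting that mere injectivity in the rectangular case suffices.
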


\begin{proof} See, for instance, \cite[Lemma 8.3.3]{Tark97}.
\end{proof}

Now we recall the Existence and Uniqueness 
Theorem for the Dirichlet Problem related to strongly elliptic operators.

\begin{problem} \label{pr.Dir}
 Given pair 
$g \in [H^{s-2m} (D)]^k$ and $\oplus_{j=0}^{m-1} w_j \in 
\oplus_{j=0}^{m-1}  [H^{s-j-1/2} (\partial D)]^k$ find, if possible a 
function $w \in [H^{s} (D)]^k$ such that 
\begin{equation} \label{eq.Dirichlet.Laplacian}
\left\{ \begin{array}{lll}
L  w =g & {\rm in} & D,\\
\oplus_{j=0}^{m-1} B_j w= \oplus_{j=0}^{m-1} w_j & {\rm on} & \partial D.\\
\end{array}
\right.
\end{equation}
\end{problem}
The problem can be treated in the framework of the operator theory in Banach spaces, regarding 
\eqref{eq.Dirichlet.Laplacian} as an operator equation 
with the linear bounded operator 
$$
(L, \oplus_{j=0}^{m-1} B_j) : [H^{s} (D)]^k \to [H^{s-2m} (D)]^k \times 
\oplus_{j=0}^{m-1}  [H^{s-j-1/2} (\partial D)]^k, \, s\geq m. 
$$
Recall that a problem related to operator equation 
$
R u =f
$ 
with a linear bounded operator $R: X_1 \to X_2$ in Banach spaces $X_1, X_2$ has the Fredholm 
property, if the kernel ${\rm ker}(R)$ of the operator $R$ and 
the co-kernel ${\rm coker}(R)$ (i.e. 
the kernel ${\rm ker}(R^*)$ of its adjoint operator $R^*: X_2^* \to X_1^*$)
are finite-dimensional vector spaces and the range of the operator $R$ is closed in $X_2$.  

\begin{theorem} \label{t.Dirichlet.M}
Let $L$ be a strongly elliptic differential operator of order $2m$, $m\geq 1$, 
with smooth coefficients  in a neighbourhood $X$ of $\overline D$,  
$\partial D\in C^s$, $s\geq m$ and 
$B=\{B_0,B_1,\dots, B_{m-1}\}$ be a Dirichlet system of order $(m-1)$ 
on $\partial D$. Then Problem \ref{pr.Dir} has the Fredholm property on the scale of the Sobolev spaces over $D$.  Namely, the dimensions of the spaces 
$[H^{s} (D) \cap H^{m}_0 (D)]^k\cap S_L (D)$ and 
$[H^{s} (D) \cap H^{m}_0 (D)]^k\cap S_{L^*} (D)$ 
are finite and  there is a bounded linear operator 
$$
{\mathcal G}_D: [H^{s-2m} (D)]^k \to [H^{s} (D)]^k \cap [H^{m}_0 (D)]^k, 
$$
$$
\Pi^{(1)}_D: [H^{s} (D)]^k \cap [H^{m}_0 (D)]^k \to [H^{s} (D)]^k \cap [H^{m}_0 (D)]^k
\cap S_L (D) , 
$$
$$
\Pi^{(2)}_D: [H^{s-2m} (D)]^k  \to [H^{s} (D)]^k \cap [H^{m}_0 (D)]^k
\cap S_{L^*} (D) , 
$$
such that 
\begin{equation} \label{eq.Hodge}
{\mathcal G}_D L = I- \Pi^{(1)}_D 
\mbox{ on } [H^s (D) \cap H^{m}_0 (D)]^k, \,\,L {\mathcal G}_D = 
I- \Pi^{(2)}_D \mbox{ on } [H^{s-2m} (D) ]^k.
\end{equation}
\end{theorem}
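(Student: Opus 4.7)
The plan is to first reduce the inhomogeneous boundary value problem to one with homogeneous Dirichlet data, then use G\aa{}rding's inequality \eqref{eq.Garding} together with Lax--Milgram and Rellich's compactness to establish the Fredholm property at the energy level $s=m$, and finally upgrade to the full Sobolev scale by elliptic boundary regularity.

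First, I would apply Lemma \ref{l.Dir.right} to lift the boundary data: for any $\oplus_{j=0}^{m-1} w_j \in \oplus_{j=0}^{m-1}[H^{s-j-1/2}(\partial D)]^k$ pick $u_1 \in [H^s(D)]^k$ with $B_j u_1 = w_j$, and substitute $w = v + u_1$. Then $v$ must solve the \emph{homogeneous} Dirichlet problem $Lv = g - Lu_1 \in [H^{s-2m}(D)]^k$, $v \in [H^m_0(D)]^k$, so it suffices to treat $L$ as an operator $[H^s(D) \cap H^m_0(D)]^k \to [H^{s-2m}(D)]^k$. At the base level $s=m$, strong ellipticity \eqref{eq.str.ell} yields via \eqref{eq.Garding} that for sufficiently large $\lambda > 0$ the sesquilinear form $(Lu, v)_{[L^2(D)]^k} + \lambda (u,v)_{[L^2(D)]^k}$ is coercive on $[H^m_0(D)]^k$, and Lax--Milgram then makes $L + \lambda I : [H^m_0(D)]^k \to [H^{-m}(D)]^k$ a topological isomorphism. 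Since $[H^m_0(D)]^k \hookrightarrow [L^2(D)]^k \hookrightarrow [H^{-m}(D)]^k$ is compact by Rellich--Kondrachov, $L = (L + \lambda I) - \lambda I$ is Fredholm of index zero between these spaces, giving finite-dimensional kernel and cokernel.

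Next I would pass to arbitrary $s \geq m$ via standard elliptic a priori estimates for the Dirichlet problem associated with a strongly elliptic $L$ and a Dirichlet system $B$; the Lopatinskii--Shapiro covering condition is automatic from item 3) of Definition \ref{def.Dir}, so the boundary regularity theory in \cite{LiMa72}, \cite{McL00}, \cite{Tark97} applies. This bootstraps every $H^m_0$ solution whose right-hand side lies in $[H^{s-2m}(D)]^k$ up to $[H^s(D)]^k$, so the kernels coincide with those at the energy level and remain finite-dimensional. I would then define $\Pi^{(1)}_D$ as the $[L^2(D)]^k$-orthogonal projector onto $[H^s(D) \cap H^m_0(D)]^k \cap S_L(D)$ and $\Pi^{(2)}_D$ as the $[L^2(D)]^k$-orthogonal projector onto $[H^s(D) \cap H^m_0(D)]^k \cap S_{L^*}(D)$ (both are smooth by elliptic regularity). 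Setting $\mathcal G_D \tilde g$ equal to the unique $v \in (I-\Pi^{(1)}_D)[H^s(D)\cap H^m_0(D)]^k$ solving $Lv = (I-\Pi^{(2)}_D)\tilde g$, the identities \eqref{eq.Hodge} follow by construction and continuity of $\mathcal G_D$ on the required scale comes from the a priori estimates.

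The main obstacle is the regularity step: one must control the Sobolev-scale a priori estimates in the $C^s$-regular boundary setting and verify that the partial inverse constructed at the energy level extends continuously as an operator $[H^{s-2m}(D)]^k \to [H^s(D)]^k$ for every $s \geq m$, uniformly handling negative smoothness indices via the identification of $H^{-s}(D)$ with the dual of $H^s_0(D)$ recalled in the preliminaries. Here the Dirichlet-system structure of $B$ is crucial on both sides: it provides the lifting in Lemma \ref{l.Dir.right}, and it guarantees the covering condition that underlies the boundary regularity theory for $(L,B)$ and (together with Lemma \ref{eq.dual.Dir}) for the formal adjoint pair $(L^*, \tilde B^L)$.
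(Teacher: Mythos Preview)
Your sketch is correct and is precisely the standard argument underlying the references the paper cites; the paper itself gives no proof beyond pointing to \cite{Mikh76}, \cite{Roit96}, \cite{GiTru83}, \cite{Brow59a}, \cite{ShTaDual} for the Fredholm property and the Hodge decomposition. Your outline (trace-lifting via Lemma~\ref{l.Dir.right}, G\aa{}rding $+$ Lax--Milgram $+$ Rellich at the energy level, then elliptic boundary regularity to reach the full scale, with finite-rank projections giving \eqref{eq.Hodge}) is exactly what one finds in those sources, so there is no genuine difference in approach.
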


\begin{proof} For the Fredholm property see, for instance, \cite{Mikh76}, \cite[Ch. 5]{Roit96} or elsewhere.  As for Hodge decomposition \eqref{eq.Hodge}, one may  
refer, for instance, to \cite{Mikh76}, \cite{GiTru83} for the second order elliptic 
operators or \cite{Brow59a} or any modern book devoted to elliptic
partial differential equation. For the Hodge decomposition related to the Dirichlet 
problem for the generalized Laplacian $A^*A$ 
in the Sobolev spaces of negative smoothness see \cite[Ch. 5]{Roit96} or 
\cite{ShTaDual}. 
\end{proof}

The operator ${\mathcal G}_D$ is called usually the Hodge 
parametrix of Dirichlet Problem \ref{pr.Dir}; if $\Pi^{(1)}_D  = \Pi^{(2)}_D =0$ then ${
\mathcal G}_D$ is called Green function of Dirichlet Problem \ref{pr.Dir}. 

In the following corollary $\{ B_j\}_{j=0}^{2m-1}$  is a Dirichlet system of order 
$(2m-1)$, considered as an extension of the Dirichlet system $\{ B_j\}_{j=0}^{m-1}$. 
As before,  $\{ \tilde B_j^{L}\}_{j=0}^{2m-1}$ is the dual Dirichlet system 
$\{ B_j\}_{j=0}^{2m-1}$ with respect to formula \eqref{eq.Green.M.B} for $A=L$. 

\begin{corollary} \label{c.Dirichlet.Poisson}
Let $L$ be a strongly elliptic differential operator of order $2m$, $m\geq 1$, 
with smooth coefficients  in a neighbourhood $X$ of $\overline D$,  
$\partial D\in C^s$, $s\geq m$. If 
\begin{equation} \label{eq.Dir.Hadamard}
[H^{m}_0 (D)]^k
\cap S_{L} (D)= [H^{m}_0 (D)]^k
\cap S_{L^*} (D) =0 
\end{equation} 
then  Problem \ref{pr.Dir} has one and only one solution. Moreover, in this case the Green function of 
the Dirichlet problem, i.e. the Schwartz 
kernel ${\mathcal G}_D (x,y)$ of the operator 
${\mathcal G}_D$,  is a bilateral fundamental solution to $L$ on $D$ and, 
in particular, the solution $u$ to Problem \ref{pr.Dir} is given by
$$
u(x) = \int_D {\mathcal G}_D (x,y) g(y) dy \, + \, {\mathcal P}_D 
\Big(\oplus_{j=0}^{m-1} u_j \Big)(x),
\, x \in D,
$$
where 
$$
 {\mathcal P}_D \Big(\oplus_{j=0}^{m-1} u_j \Big) (x)=
\int_{\partial D} 
\sum_{j=0}^{m-1} (\tilde B^{L}_{2m-j-1}  {\mathcal G}^{*}_D (x,y))^* u_j (y) d\sigma(y) 
$$
is the Poisson type integral related to Problem \ref{pr.Dir} with the corresponding 
linear bounded operator  
$$
 {\mathcal P}_D :[\oplus_{j=0}^{m-1}  H^{s-j-1/2} (\partial D)]^k \to 
[H^{s} (D) ]^k \cap S_{L} (D).
$$ 
\end{corollary}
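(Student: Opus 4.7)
The plan is to derive everything directly from Theorem \ref{t.Dirichlet.M} by observing that assumption \eqref{eq.Dir.Hadamard} annihilates the two obstruction spaces appearing there. First I would note that the projections $\Pi^{(1)}_D$ and $\Pi^{(2)}_D$, whose ranges are $[H^s(D)\cap H^m_0(D)]^k\cap S_L(D)$ and the finite-dimensional space associated with $[H^s(D)\cap H^m_0(D)]^k\cap S_{L^*}(D)$, both vanish under \eqref{eq.Dir.Hadamard}. Consequently \eqref{eq.Hodge} collapses to $\mathcal{G}_D L = I$ on $[H^s(D)\cap H^m_0(D)]^k$ and $L\mathcal{G}_D = I$ on $[H^{s-2m}(D)]^k$, i.e.\ $\mathcal{G}_D$ is a genuine inverse modulo boundary data. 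Uniqueness for Problem \ref{pr.Dir} is then immediate (zero data forces $w\in[H^s(D)\cap H^m_0(D)]^k\cap S_L(D)=\{0\}$); for existence with general data I would lift $\oplus_{j=0}^{m-1}u_j$ via Lemma \ref{l.Dir.right} to some $W\in[H^s(D)]^k$ and set $w:=W+\mathcal{G}_D(g-LW)$, with the identity $\mathcal{G}_D L=I$ on $H^m_0$ guaranteeing that the correction lies in $H^m_0(D)$.

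The bilateral fundamental-solution property of $\mathcal{G}_D(x,y)$ is just a Schwartz-kernel restatement of these operator identities: $L\mathcal{G}_D = I$ yields $L_x \mathcal{G}_D(x,y) = \delta(x-y)I_k$ in $x$, while $\mathcal{G}_D L = I$, transposed, yields $L^*_y \mathcal{G}_D(x,y) = \delta(x-y)I_k$ in $y$.

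For the Poisson representation I would apply the Green formula \eqref{eq.Green.M.B} of Lemma \ref{eq.dual.Dir} with $A=L$ and the \emph{extended} Dirichlet system $\{B_j\}_{j=0}^{2m-1}$ of order $2m-1$, plugging in the solution $u$ and $v(y)=\mathcal{G}_D^*(x,y)$ for fixed $x\in D$, where $\mathcal{G}_D^*$ denotes the Green kernel of the adjoint Dirichlet problem for $L^*$ with dual boundary conditions $\tilde B^L_0,\dots,\tilde B^L_{m-1}$. Splitting the boundary sum as $\sum_{j=0}^{m-1}+\sum_{j=m}^{2m-1}$, the upper half drops because the factors $\tilde B^L_{2m-1-j}$ for $j\geq m$ are exactly the operators $\tilde B^L_0,\dots,\tilde B^L_{m-1}$ that vanish on $\mathcal{G}_D^*(x,\cdot)$. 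The lower half, where $B_j u=u_j$ is prescribed data, reproduces $\mathcal{P}_D(\oplus u_j)(x)$. The volume integral, via $L^*_y \mathcal{G}_D^*(x,y)=\delta(y-x)I_k$ and $Lu=g$, contributes $-u(x)+\int_D \mathcal{G}_D(x,y) g(y)\,dy$, and rearrangement gives the stated formula. Boundedness of $\mathcal{P}_D$ into $[H^s(D)]^k\cap S_L(D)$ then follows by subtracting the volume contribution from the boundedness of the Dirichlet inverse.

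The main obstacle will be setting up the adjoint Green kernel $\mathcal{G}_D^*$ correctly and verifying the dual boundary conditions $\tilde B^L_k \mathcal{G}_D^*(x,\cdot)=0$, $k=0,\dots,m-1$. This hinges on identifying the formal $L^2$-adjoint of the boundary value operator $(L;B_0,\dots,B_{m-1})$ with the adjoint Dirichlet operator $(L^*;\tilde B^L_0,\dots,\tilde B^L_{m-1})$, which is exactly what Lemma \ref{eq.dual.Dir} encodes when tested on functions in $[H^m_0(D)]^k$. A secondary technical point is the singularity of $\mathcal{G}_D^*(x,y)$ at $y=x$ when invoking \eqref{eq.Green.M.B}: this is handled by excising a small ball around $x$, using the known pole asymptotics of the elliptic fundamental solution, and passing to the limit.
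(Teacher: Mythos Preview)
Your approach is correct and is essentially the standard argument that the cited references (Mikhailov, Roitberg) carry out. The paper itself gives no proof here: it simply writes ``See, for instance, \cite{Mikh76}, \cite[Ch.~5]{Roit96} or elsewhere.'' So there is nothing in the paper to compare against beyond the fact that your derivation from Theorem~\ref{t.Dirichlet.M}, Lemma~\ref{l.Dir.right} and the Green formula of Lemma~\ref{eq.dual.Dir} is exactly the route those textbooks take.

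One small clarification worth making explicit in your write-up: the two obstacles you flag are genuinely linked. The identity $\mathcal{G}_D^*(x,y)=\mathcal{G}_{D,L^*}(y,x)$ between the adjoint of the Schwartz kernel and the Green kernel of the adjoint problem $(L^*;\tilde B^L_0,\dots,\tilde B^L_{m-1})$ follows from the Green formula once you know that \emph{any} Dirichlet system of order $m-1$ cuts out the same subspace $[H^m_0(D)]^k$; hence hypothesis~\eqref{eq.Dir.Hadamard}, which is stated for the original operators $B_j$, automatically gives uniqueness for the adjoint Dirichlet problem with the $\tilde B^L_j$'s as well. With that in hand the dual boundary conditions $\tilde B^L_k\,\mathcal{G}_D^*(x,\cdot)=0$ for $k\le m-1$ are immediate, and your ball-excision argument for the singularity at $y=x$ is the standard way to finish.
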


\begin{proof} See, for instance, \cite{Mikh76}, \cite[Ch. 5]{Roit96} or elsewhere.
\end{proof}

For generalized Laplacians the Hodge decomposition is more simple.

\begin{corollary} \label{c.Dirichlet.Hodge}
Let $L = A^*A$ where $A$ be a differential  operator with injective symbol 
with smooth coefficients  in a neighbourhood $X$ of $\overline D$, $m\geq 1$, 
$\partial D\in C^s$, $s\geq m$ and 
$B=\{B_0,B_1,\dots, B_{m-1}\}$ be a Dirichlet system of order $(m-1)$ 
on $\partial D$. Then $L^*=L$, 
$$[H^{s} (D) \cap H^{m}_0 (D)]^k\cap S_L (D) = 
[H^{s} (D) \cap H^{m}_0 (D)]^k\cap S_A (D)
$$
and $\Pi^{(1)}_D = \Pi^{(2)}_D = \Pi_D$ where 
$\Pi_D$ be the $L^2 (D)$-orthogonal projection 
on the finite-dimensional space $[H^{s} (D) \cap H^{m}_0 (D)]^k\cap S_A (D)$. 
If the operator $A$ satisfies (US) on $X$ 
then 
$[H^{s} (D) \cap H^{m}_0 (D)]^k\cap S_A (D) = \{0\}$ 
and Problem \ref{pr.Dir} has one and only one solution. 
\end{corollary}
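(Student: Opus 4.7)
The plan is to verify the four assertions in order, with only the last requiring substantial input. The identity $L^* = L$ is immediate from $L = A^*A$ and the functorial behaviour of the formal adjoint, $(A^*A)^* = A^*A$.

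For the coincidence of kernels $[H^s(D)\cap H^m_0(D)]^k\cap S_L(D) = [H^s(D)\cap H^m_0(D)]^k\cap S_A(D)$, the inclusion $\supset$ is trivial. For the reverse inclusion I would take $u$ in the left-hand side, approximate it in the $[H^m(D)]^k$ norm by test functions $u_\nu \in [C^\infty_{\mathrm{comp}}(D)]^k$, apply the Green formula of Lemma \ref{eq.dual.Dir} to $u_\nu$ so that all boundary terms vanish identically, and pass to the limit to obtain the identity $(Lu,u)_{[L^2(D)]^k} = \|Au\|^2_{[L^2(D)]^k}$. Given $Lu = 0$, this forces $Au = 0$ in $D$. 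With the kernels of $L$ and of $L^* = L$ thus both identified with the single space $\mathcal N := [H^s(D)\cap H^m_0(D)]^k\cap S_A(D)$, Theorem \ref{t.Dirichlet.M} tells us that $\Pi^{(1)}_D$ and $\Pi^{(2)}_D$ each project onto $\mathcal N$, while the self-adjointness of $L$ built into the Hodge decomposition \eqref{eq.Hodge} identifies each of them with the $L^2(D)$-orthogonal projection $\Pi_D$ onto $\mathcal N$.

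The main work lies in the vanishing of $\mathcal N$ under the hypothesis (US) for $A$. Given $u \in \mathcal N$, I would extend $u$ by zero across $\partial D$ to obtain $\tilde u$. Since $u \in [H^m_0(D)]^k$ and $\partial D \in C^s$ with $s \geq m$, standard extension theory guarantees $\tilde u \in [H^m(X)]^k$; and since zero extension commutes with every differential operator of order $\leq m$ on $[C^\infty_{\mathrm{comp}}(D)]^k$ with both sides depending continuously on the argument in $[H^m(D)]^k$, passing to the $H^m_0(D)$-limit yields $A\tilde u = \widetilde{Au} = 0$ on $X$ in the distributional sense. Thus $\tilde u \in S_A(X_0)$ on every connected component $X_0$ of $X$ meeting $D$, and $\tilde u$ vanishes on the non-empty open subset $X_0\setminus \overline D$. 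Property (US) for $A$ then forces $\tilde u \equiv 0$ on $X_0$, and varying over the relevant components yields $u \equiv 0$ in $D$. Hence $\mathcal N = \{0\}$, and Theorem \ref{t.Dirichlet.M} now upgrades the Fredholm statement to unique solvability of Problem \ref{pr.Dir}.

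The principal obstacle is the distributional identity $A\tilde u = 0$ throughout $X$ rather than merely inside $D$: without it, property (US) cannot be engaged. It is precisely the combination of the boundary regularity $\partial D \in C^s$ with $s \geq m$ and the stronger membership $u \in H^m_0(D)$ (as opposed to just $H^m(D)$) that ensures the zero extension preserves the equation across $\partial D$.
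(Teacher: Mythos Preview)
Your proposal is correct and in fact supplies considerably more detail than the paper, whose proof consists only of the citation ``See, for instance, \cite{ShTaDual} or elsewhere.'' The four steps you outline are the standard ones: $(A^*A)^*=A^*A$; the integration-by-parts identity $\langle Lu,u\rangle=\|Au\|^2_{L^2}$ for $u\in H^m_0(D)$ (more precisely a duality pairing than an $L^2$ inner product when $s<2m$, but this is cosmetic since $Lu=0$); and the extension-by-zero argument to invoke (US), which is exactly the right mechanism and which you justify carefully.

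One small point deserves a bit more care. Your sentence ``the self-adjointness of $L$ built into the Hodge decomposition \eqref{eq.Hodge} identifies each of them with the $L^2(D)$-orthogonal projection'' is slightly optimistic: the abstract statement of Theorem~\ref{t.Dirichlet.M} only asserts the \emph{existence} of operators $\mathcal G_D,\Pi^{(1)}_D,\Pi^{(2)}_D$ satisfying the Hodge identities, and idempotence of $\Pi^{(1)}_D$ with range $\mathcal N$ follows as you indicate, but orthogonality with respect to the $L^2$ pairing is an additional feature coming from the specific variational construction of $\mathcal G_D$ for the self-adjoint operator $A^*A$. That is precisely what the cited reference \cite{ShTaDual} supplies, so your plan is sound; just be aware that ``self-adjointness of $L$'' alone does not force an arbitrary projection onto $\mathcal N$ to be the orthogonal one.
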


\begin{proof} See, for instance, \cite{ShTaDual} or elsewhere. 
\end{proof}

\begin{remark} \label{r.m=1}
Note that for the second order Laplacians $L=A^*A$ (i.e. for the case 
$m=1$, the results of 
Lemmata \ref{l.Dir.right}, \ref{eq.dual.Dir}, Corollaries \ref{c.Dirichlet.Poisson}, 
\ref{c.Dirichlet.Hodge}  and Theorem \ref{t.Dirichlet.M} can be 
extended domains with Lipschitz boundaries using the classical method of non-negative 
Hermitian forms, see \cite{Mikh76}  or \cite{ShTaDU}, \cite{ShTSibAdv} 
for the case of even more general problem with domains having 
non-smooth boundaries and with non-smooth boundary operators in (weighted) Sobolev spaces.
\end{remark}

\section{Exterior extension problems for strongly elliptic systems
}
\label{S.2}

We begin the section with the discussion of the ill-posed 
Cauchy problem for a $(k\times k)$-{\it elliptic} system $L$ 
of order $2m$ with $m\geq 1$ on $X $ 
in a very particular situation. Namely, let $\Omega_0$ and $\Omega_1$ be two 
bounded domains in $X$ such that $\overline \Omega_0 \subset \Omega_1$.

\begin{problem}\label{pr.Cauchy.M}
Let $s \in {\mathbb Z}_+$ and $\partial \Omega_0\in C^s$ if $s\geq m$ or $\partial 
\Omega_0\in C^\infty$ if $s<m$. Let also $B=\{B_0,B_1, \dots B_{2m-1}\}$  be 
a Dirichlet system 
of order $(2m-1)$ on $\partial \Omega_0$. Given functions 
$u_j \in [H^{s-j-1/2} (\partial \Omega_0)]^k$, $0\leq j \leq 2m-1$, find, if possible,  
a vector function $u \in [H^{s} (\Omega _1 \setminus \overline \Omega_0)]^k$ such that 
\begin{equation*} 
\left\{ \begin{array}{lll}
L u =0 & {\rm in } & \Omega _1 \setminus \overline \Omega_0,\\
\oplus _{j=0}^{2m-1} B_j u = \oplus _{j=0}^{2m-1} u_j  & \rm{  on }  & \partial \Omega_0.\\
\end{array}
\right.
\end{equation*}
\end{problem}

As the Cauchy problem is generally ill-posed, the description
of its solvability conditions is rather complicated. 
 It appears that the regularization methods 
(see, for instance, \cite{TikhArsX}) are most effective for  
studying the problem.  However, there are many different ways to 
realize the regularization, see, for instance, 
\cite{Lv1}, \cite{MH74}, \cite{KMF91} for the Cauchy problem related 
to the second order elliptic equations. We follow idea of the book \cite{Tark36}, that
gives a rather full description of solvability conditions for the homogeneous 
elliptic equations. The even order of the system $L$ is unessential 
for Problem \ref{pr.Cauchy.M} but is essential to other extension problems 
consired in this section. 

As we mentioned above, if we assume that both operators $L$ and $L^*$
possess the Unique continuation property (US)
then $L$ 
admits a bilateral (left and right) fundamental solution $\Phi (x,y)$, 
see, for instance, \cite[\S 2.3]{Tark95a}. In particular, the following 
(representation)
Green formula holds: for each $u \in [H^{s} (\Omega _1 \setminus \overline \Omega_0)]^k$ we have 
\begin{equation} \label{eq.Green.M.two}
\chi_{\Omega _1 \setminus \overline \Omega_0} u = {\mathcal T}^{(B)}_{\partial \Omega_1} (Bu) 
- {\mathcal T}^{(B)}_{\partial \Omega_0} (Bu),  
\end{equation}
where $\chi_D$ is the characteristic function of the (bounded) domain $D$ in ${\mathbb R}^n$, 
and 
\begin{equation*}
{\mathcal T}^{(B)}_{S} (\oplus_{j=0}^{2m-1} u_j) = \int_S 
\Big(\sum_{j=0}^{2m-1}  (\tilde B^L_j (y) \Phi^{*} (x,y) )^* u_j (y)\Big) 
d\sigma (y), 
\end{equation*}
with a hypersurface $S$, $x \not \in S$, and the dual Dirichlet system $\tilde B^L$ for $B$ 
with respect to the (first) Green formula for the operator $L$.

Let us formulate a solvability criterion for Problem \ref{pr.Cauchy.M} under  
reasonable assumptions on $S$. 
We denote by $({\mathcal T}^{(B)}_{\partial \Omega_0}(\oplus_{j=0}^{2m-1}  
u_j))^+$ 
 the restriction of the potential ${\mathcal T}^{(B)}_{\partial \Omega_0}(\oplus_{j=0}^{2m-1}  u_j)$
 onto $\Omega_0$. 

Similarly, the restriction 
of the potential ${\mathcal T}^{(B)}_{\partial \Omega_0} (\oplus_{j=0}^{2m-1}  u_j)$
onto $\Omega_1\setminus \overline \Omega_0$ wil be denoted by 
$({\mathcal T}^{(B)}_{\partial \Omega_0} (\oplus_{j=0}^{2m-1}  u_j))^-$. 
Obviously, 
$$
L {\mathcal T}^{(B)}_{\partial \Omega_0} (\oplus_{j=0}^{2m-1}  u_j) = 0 \mbox{ in } 
\Omega_1 \setminus \partial \Omega_0,
$$
as a parameter dependent integrals, i.e.
$({\mathcal T}^{(B)}_{\partial \Omega_0} (\oplus_{j=0}^{2m-1}  u_j))^+ \in S_{L}(\Omega_0)$ 
and, similarly, $({\mathcal T}^{(B)}_{\partial \Omega_0}(\oplus_{j=0}^{2m-1}  u_j))^- \in S_{L}
(\Omega_1 \setminus \overline \Omega_0)$.

\begin{theorem} \label{t.Cauchy.M}
 Let $L$ be an elliptic operator such that both 
$L$ and $L^*$ satisfy (US)
on $X$ and $\Omega_0$ be a 
bounded domain in $X$. Let also 
$s \in {\mathbb Z}_+$ and $\partial \Omega_0\in C^{\max{(s,2)}}$ 
if $s\geq m$ or $\partial \Omega_0\in C^
\infty$ if $s<m$. 
If $\Omega_1 \setminus  \Omega_0$ has no compact components in $\Omega_1$ then 
Problem \ref{pr.Cauchy.M} is densely solvable and 
it has no more than one solution. It 
is solvable if and only if there is 
a function ${\mathcal F}\in [H^s (\Omega_1)]^k\cap S_{L}(\Omega_1)$ 
and such that 
$$
{\mathcal F}= ({\mathcal T}^{(B)}_{\partial \Omega_0}(\oplus_{j=0}^{2m-1}  u_j))^+
\mbox{ in } \Omega_0.
$$
Besides, the solution $u$, if exists, is given by the following formula:
\begin{equation} \label{eq.sol.Cauchy}
u  = ({\mathcal T}^{(B)}_{\partial \Omega_0}(\oplus_{j=0}^{2m-1}  u_j))^-
- {\mathcal F} \mbox{ in } \Omega_1 \setminus \overline \Omega_0.
\end{equation}
\end{theorem}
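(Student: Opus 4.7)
My plan is to separate the theorem into four parts---uniqueness, necessity of the $\mathcal F$-criterion, sufficiency together with the reconstruction formula \eqref{eq.sol.Cauchy}, and density of admissible Cauchy data---and to handle them using the representation Green formula \eqref{eq.Green.M.two} together with the Unique Continuation Property. I would begin with uniqueness. Suppose $u$ is a solution whose Cauchy data $\oplus_j B_j u$ on $\partial\Omega_0$ vanish. Because $B=\{B_0,\dots,B_{2m-1}\}$ is a full Dirichlet system of order $2m-1$, extending $u$ by zero across $\partial\Omega_0$ produces $\tilde u\in[H^s(\Omega_1)]^k$ whose $B_j$-traces match from both sides, hence $L\tilde u=0$ distributionally on all of $\Omega_1$. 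Since $\tilde u\equiv 0$ on the open set $\Omega_0$, the property (US) propagates this zero to the whole connected component of $\Omega_1$ meeting $\Omega_0$; under the hypothesis on the components of $\Omega_1\setminus\Omega_0$, every component of $\Omega_1\setminus\overline\Omega_0$ lies in such a component of $\Omega_1$, forcing $u\equiv 0$.

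For the representation, I would apply \eqref{eq.Green.M.two} to a hypothetical solution $u$. The potential $\mathcal F:=-\mathcal T^{(B)}_{\partial\Omega_1}(Bu)$, with sign chosen to match \eqref{eq.sol.Cauchy}, is a parameter-dependent integral over $\partial\Omega_1$, and hence as a function of $x\in\Omega_1$ it lies in $[H^s(\Omega_1)]^k\cap S_L(\Omega_1)$. Reading \eqref{eq.Green.M.two} on $\Omega_0$, where the characteristic function vanishes, produces the identity $\mathcal F=(\mathcal T^{(B)}_{\partial\Omega_0}(\oplus u_j))^+$ on $\Omega_0$; this is the necessary condition, while reading \eqref{eq.Green.M.two} on $\Omega_1\setminus\overline\Omega_0$ recovers \eqref{eq.sol.Cauchy}. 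Conversely, given any $\mathcal F$ with this property I define $u$ by \eqref{eq.sol.Cauchy}; both summands lie in $S_L(\Omega_1\setminus\overline\Omega_0)$, and the Cauchy condition $B_j u=u_j$ on $\partial\Omega_0$ follows by combining the jump relations $B_j(\mathcal T^{(B)}_{\partial\Omega_0}(\oplus u_j))^- - B_j(\mathcal T^{(B)}_{\partial\Omega_0}(\oplus u_j))^+=u_j$ for the multi-layer potential with the observation that $\mathcal F$ and $(\mathcal T^{(B)}_{\partial\Omega_0}(\oplus u_j))^+$ share interior boundary traces on $\partial\Omega_0$.

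For dense solvability, I note that every $F\in S_L(\Omega_1)\cap[H^s(\Omega_1)]^k$ supplies admissible data by taking $\mathcal F:=F$, so it suffices to prove density of $\{\oplus_j B_j F|_{\partial\Omega_0}:F\in S_L(\Omega_1)\cap[H^s(\Omega_1)]^k\}$ in $\oplus_{j=0}^{2m-1}[H^{s-j-1/2}(\partial\Omega_0)]^k$. I would argue by Hahn--Banach: a continuous functional $\ell$ annihilating these traces can, using the first Green formula and the dual Dirichlet system $\tilde B^L$, be realized as an integral pairing against a distribution supported on $\partial\Omega_0$ whose corresponding multi-layer potential for $L^*$ is orthogonal to $S_L(\Omega_1)$. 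The uniqueness for the analogous extension problem for $L^*$---available because $L^*$ satisfies (US) and the component hypothesis is purely geometric---then forces $\ell=0$. This is the Runge-type approximation pattern of \cite{Tark36, Tark97}.

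The main technical obstacle I anticipate is establishing the jump relations for the multi-layer potential $\mathcal T^{(B)}_{\partial\Omega_0}$ precisely at the prescribed Sobolev-Slobodetskii index---especially when $s-j-1/2$ becomes negative---and validating the interior/exterior trace computations at that smoothness level; this is where the $C^{\max(s,2)}$ or $C^\infty$ regularity of $\partial\Omega_0$ enters, and where the layer-potential calculus recorded in the preliminaries, together with \cite{Tark97, Roit96}, is indispensable.
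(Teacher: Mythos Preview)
Your treatment of uniqueness, of the necessity of the extension criterion, and of sufficiency via the jump relations is correct and follows the same line as the paper (which largely defers these points to \cite{ShTaLMS}); your identification of the low-regularity jump calculus as the technical crux is also on target.

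The density argument, however, contains a genuine error. You propose to show that the traces $\{\oplus_j B_j F\mid_{\partial\Omega_0}: F\in S_L(\Omega_1)\cap[H^s(\Omega_1)]^k\}$ are dense in the full data space. This is false. Any such $F$ restricts to an element of $S_L(\Omega_0)$, so its higher traces $B_mF,\dots,B_{2m-1}F$ on $\partial\Omega_0$ are determined by $B_0F,\dots,B_{m-1}F$ through the bounded Dirichlet-to-Neumann operator of $\Omega_0$; the Cauchy data you produce therefore lie on the graph of this operator, a proper closed subspace. Concretely, for $L=-\Delta$ in the plane with $\Omega_0=B(0,1)\subset\Omega_1=B(0,2)$, every harmonic $F$ on $\Omega_1$ has the form $\sum_n a_n r^{|n|}e^{in\theta}$, so on $r=1$ one gets $(\sum a_n e^{in\theta},\sum |n|a_n e^{in\theta})$; the continuous functional $(u_0,u_1)\mapsto \hat u_1(1)-\hat u_0(1)$ vanishes on all such pairs but not on $(0,e^{i\theta})$. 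Your Hahn--Banach step must therefore break down: orthogonality of the $L^*$-potential to $S_L(\Omega_1)$ does not force that potential to vanish.

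What is missing is that one must allow solutions in the annulus that do \emph{not} extend across $\partial\Omega_0$. The paper does this by writing $u_j$ as the jump $B_j(\mathcal T^{(B)}_{\partial\Omega_0}(\oplus u_j))^- - B_j(\mathcal T^{(B)}_{\partial\Omega_0}(\oplus u_j))^+$ and approximating the two pieces separately: the interior piece $(\mathcal T)^+\in S_L(\Omega_0)\cap[H^s(\Omega_0)]^k$ by elements of $S_L(\overline\Omega_1)$ (via Runge-type approximation, which is where the no-compact-components hypothesis enters), and the exterior piece $(\mathcal T)^-$ by elements of $S_L(\overline{\Omega_1\setminus\Omega_0})$. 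The approximating Cauchy datum is then $\oplus_j B_j(F^- - F^+)$ with $F^- - F^+$ a solution only on the annulus; in the disc example this brings in the singular modes $r^{-|n|}e^{in\theta}$ and restores density.
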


\begin{proof} All the statement, except the dense solvability  
follow from \cite[Theorems 2.8 and 5.2]{ShTaLMS}. 
The dense solvability of the Cauchy problem 
was established in spaces of different types, see, 
for instance,  \cite{MH74}, \cite{Me56}, for the Laplace operator in spaces 
of $C^1$-smooth functions or \cite[Lemma 3.2]{ShTaLMS} for general elliptic 
systems in the so-called Hardy spaces. In order to give some arguments 
about the dense solvability in this particular situation 
we may use Approximation Theorems for solutions to elliptic systems,
 see, for instance, \cite[Ch. 5-8]{Tark36} and 
Theorems on the jump behaviour of the potential 
${\mathcal T}^{(B)}_{\partial \Omega_0}(\oplus_{j=0}^{2m-1}  u_j)$, 
see \cite[Theorem 3.3.9]{Tark95a} and \cite[Lemma 2.7]{ShTaLMS}. 

Indeed, the  continuity of the potentials 
\begin{equation} \label{eq.potentials.cont.+}
({\mathcal T}^{(B)}_{\partial \Omega_0}(\oplus_{j=0}^{2m-1}  u_j))^+
: \oplus_{j=0}^{2m-1} [H^{s-j-1/2} (\partial \Omega _0 )]^k 
\to [H^s (\Omega_0)]^k, 
\end{equation} 
\begin{equation} \label{eq.potentials.cont.-}
({\mathcal T}^{(B)}_{\partial \Omega_0}(\oplus_{j=0}^{2m-1}  u_j))^-
:\oplus_{j=0}^{m-1} [H^{s-j-1/2} (\partial \Omega _0 )]^k 
\to [H^s (\Omega_1 \setminus \overline \Omega_0)]^k, 
\end{equation}
follows from Theorems on the boundedness of the boundary 
potential operators related the pseudo differential operators satisfying the 
so-called transmission property, see, for instance, 
 \cite[\S 2.3.2.5 ]{RS82} or \cite[\S 2.4]{Tark36}. 

In particular, the continuity of the potentials mean that 
\begin{equation} \label{eq.potentials.plus}
({\mathcal T}^{(B)}_{\partial \Omega_0}(\oplus_{j=0}^{2m-1}  u_j))^+
\in S_{L}(\Omega_0) \cap [H^s ( \Omega_0)]^k,
\end{equation}
\begin{equation} \label{eq.potentials.minus}
({\mathcal T}^{(B)}_{\partial \Omega_0}(\oplus_{j=0}^{2m-1}  u_j))^-
\in S_{L}(\Omega_1 \setminus \overline \Omega_0) \cap 
[H^s (\Omega_1 \setminus \overline \Omega_0)]^k.
\end{equation}

Next, according to Jump Theorems, for $0\leq j\leq 2m-1$ we have
\begin{equation} \label{eq.jump}
B_j ({\mathcal T}^{(B)}_{\partial \Omega_0}(\oplus_{j=0}^{2m-1}  u_j))^-
- B_j ({\mathcal T}^{(B)}_{\partial \Omega_0}(\oplus_{j=0}^{2m-1}  u_j))^+ =u_j \mbox{ on } \partial \Omega_0,
\end{equation}
where the type of boundary values depends on the order of the differential operators.
Of course, if $s\geq 2m$ then all the boundary values under the 
consideration  can be treated as the standard traces on $\partial \Omega$. Otherwise, 
if the orders of operators $B_j$ 
are greater than $s$ 
then  we should pass to the so-called weak boundary values, see, for instance, \cite{Str84} 
for harmonic functions or \cite[\S\S 9.3, 9.4]{Tark97} or \cite[Definition 2.2]{ShTaLMS} 
for general elliptic systems satisfying the Unique Continuation Property (US) 
and then the boundary should be sufficiently regular (for example, $C^\infty$-smooth for $0
\leq s<m$, but it always can be a high finite smoothness). In any case, 
under the assumptions above the mappings
\begin{equation} \label{eq.traces.int}
 B_j : [H^s (\Omega_0)]^k \cap S_{L}(\Omega_0) \to 
 [H^{s-j-1/2} (\partial \Omega _0 )]^k, \, 
\end{equation}
\begin{equation} \label{eq.traces.ext}
B_j : [H^s (\Omega_1 \setminus \overline \Omega_0)]^k \cap 
S_{L}(\Omega_1 \setminus \overline \Omega_0) \to 
[H^{s-j-1/2} (\partial \Omega _0 )]^k ,
\end{equation}
are continuous. 

Next, for a domain $D$ in $X$ we denote by $S_{L} (\overline D)$ the union $\cup
_{U\supset \overline D}S_{L} (U)$ of solutions on all open sets in $X$ 
containing  $\overline D$. 

According to \cite[Theorem 8.2.2]{Tark36} for $s< 2m$ and 
\cite[Theorems 8.1.2 and 8.1.3]{Tark36} for $s\geq 2m$, 
the space $S_{L} (\overline \Omega_0)$ is everywhere dense in 
$S_{L} (\Omega_0) \cap [H^s (\Omega_0)]^k$ and, similarly, 
the space $S_{L} (\overline \Omega_1 \setminus \Omega_0)$ is everywhere dense in the space 
$S_{L} (\Omega_1\setminus \overline \Omega_0) \cap 
[H^s (\Omega_1\setminus \overline \Omega_0)]^k$. In particular, 
the potential $({\mathcal T}^{(B)}_{\partial \Omega_0}(\oplus_{j=m}^{2m-1}  u_j))^-$
can be approximated 
in the space $[H^s (\Omega_1 \setminus \overline  \Omega_0)]^k$ by elements from 
$S_{L} (\overline \Omega_1 \setminus \Omega_0)$. 

The Runge type Theorems for solutions to elliptic systems, 
see, for instance, \cite{Me56}	for harmonic functions, 
\cite[Theorems 5.1.11, 5.1.13]{Tark36}, yields that the space 
$S_{L} (\overline \Omega_1)$ is everywhere dense in 
the space $S_{L} (\overline \Omega_0)$, since 
$\Omega_1 \setminus  \Omega_0$ has no compact components in $\Omega_1$. 
Hence, relations \eqref{eq.potentials.plus}, \eqref{eq.potentials.minus} 
imply that the potential $({\mathcal T}^{(B)}_{\partial \Omega_0}(\oplus_{j=m}^{2m-1}  u_j))^+$
can be approximated 
in the space $[H^s ( \Omega_0)]^k$ by elements from $S_{L} (\overline \Omega_1)$. 

Finally, the jump formulas \eqref{eq.jump} 
combined with the continuity relations \eqref{eq.traces.int}--\eqref{eq.traces.ext} 
yield the possibility to approximate the Cauchy data $\oplus _{j=0}^{2m-1} u_j $ 
in the space $ \oplus _{j=0}^{2m-1} [H^{s-j-1/2} ( \partial \Omega_0)]^k $  
by elements of the type $\oplus _{j=0}^{2m-1} B_j u $ with $u $ from the space 
$S_{L}(\overline \Omega_1 \setminus \Omega_0)$, which was to be proved. 
\end{proof}

Thus, Theorem \ref{t.Cauchy.M} reduces Problem \ref{pr.Cauchy.M}
to the following problem related to "analytic continuation"{} from an 
open subset of $X$ to a larger one.

\begin{problem} \label{pr.ext.1} Given $V \in [H^s (\Omega_0)]^k \cap 
S_{L} ( \Omega_0)$ find, if possible, $U 
\in [H^s (\Omega_1)]^k \cap S_{L} ( \Omega_1)$ such that $U=V$ in $\Omega_0$.
\end{problem}

\begin{corollary} \label{c.ext.1}
Let $L$ be an elliptic operator such that both $L$ and $L^*$ satisfy (US) on $X$ and $\Omega_0$ be a 
bounded domain. Let also 
$s \in {\mathbb Z}_+$ and $\partial \Omega_0\in C^{\max{(s,2)}}$ 
if $s\geq m$ or $\partial \Omega_0\in C^
\infty$ if $s<m$. 
If $\Omega_1 \setminus  \Omega_0$ has no compact components in $\Omega_1$ then 
Problem \ref{pr.ext.1} is densely solvable and 
it has no more than one solution. It 
is solvable if and only if  Problem \ref{pr.Cauchy.M} 
is solvable for the data $\oplus_{j=0}^{2m-1} u_j = \oplus_{j=0}^{2m-1} B_j V$ 
on $\partial \Omega_0$. 
\end{corollary}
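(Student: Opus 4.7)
\medskip
\noindent\textbf{Proof plan.} The plan is to derive all three assertions of the corollary---uniqueness, solvability criterion, and dense solvability---directly from Theorem \ref{t.Cauchy.M} by demonstrating the equivalence of Problem \ref{pr.ext.1} with Problem \ref{pr.Cauchy.M} for the Cauchy data $\oplus_{j=0}^{2m-1} u_j = \oplus_{j=0}^{2m-1} B_j V$ on $\partial \Omega_0$. Uniqueness is the cheapest step: if $U_1$ and $U_2$ both solve Problem \ref{pr.ext.1}, their difference lies in $[H^s(\Omega_1)]^k \cap S_L(\Omega_1)$ and vanishes on the open set $\Omega_0$, so the Unique Continuation Property (US) for $L$ forces $U_1 = U_2$ on $\Omega_1$.

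For the equivalence with the Cauchy problem, the forward direction is essentially a trace computation: if $U$ solves Problem \ref{pr.ext.1}, then the restriction $u = U|_{\Omega_1 \setminus \overline\Omega_0}$ solves $Lu = 0$ there, and its Dirichlet traces $B_j u$ on $\partial \Omega_0$ coincide with $B_j V$ because $V$ and $u$ are two restrictions of a single $[H^s(\Omega_1)]^k$-function---the traces being understood as standard traces when $s \geq 2m$ or as the weak boundary values of \cite[Definition 2.2]{ShTaLMS} when $s < 2m$. For the reverse direction I would glue $V$ and $u$ into
\[
U = \begin{cases} V & \text{in } \Omega_0, \\ u & \text{in } \Omega_1 \setminus \overline\Omega_0, \end{cases}
\]
and verify that $U \in [H^s(\Omega_1)]^k \cap S_L(\Omega_1)$. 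The key observation is that matching the \emph{full} Dirichlet system $\{B_j\}_{j=0}^{2m-1}$ of order $2m-1$ precisely cancels every boundary distribution in the first Green formula of Lemma \ref{eq.dual.Dir} applied across $\partial \Omega_0$, so that $LU = 0$ holds on all of $\Omega_1$ in the sense of distributions; elliptic regularity then yields the Sobolev-class assertion, and $U = V$ in $\Omega_0$ is automatic.

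Dense solvability follows without additional work from the Runge-type density machinery already invoked in the proof of Theorem \ref{t.Cauchy.M}: since $\Omega_1 \setminus \Omega_0$ has no compact components in $\Omega_1$, the space $S_L(\overline \Omega_1)$ is dense in $S_L(\overline \Omega_0)$, which is in turn dense in $[H^s(\Omega_0)]^k \cap S_L(\Omega_0)$. Every element of $S_L(\overline \Omega_1)$ is a solution on some open neighbourhood of $\overline \Omega_1$, hence restricts trivially to an admissible extension in $[H^s(\Omega_1)]^k \cap S_L(\Omega_1)$, showing that the image of Problem \ref{pr.ext.1} is dense in its target space.

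The main obstacle I anticipate is the gluing step in the low-regularity regime $s < m$: there the operators $B_j$ with $j > s$ have order strictly larger than the Sobolev regularity of $u$ and of $V$, so the matching of traces and the distributional identity $LU = 0$ across $\partial \Omega_0$ must be interpreted within the weak boundary value framework, invoking the jump relations \eqref{eq.jump} and the Green identity of Lemma \ref{eq.dual.Dir} rather than direct trace computations; once that framework is in place, however, the cancellation of all boundary terms is immediate because the entire Dirichlet datum vanishes from one side to the other.
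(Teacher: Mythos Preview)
Your proposal is correct, and the uniqueness, forward direction, and dense solvability arguments coincide with the paper's. The genuine difference lies in the gluing step (reverse direction of the equivalence). You propose to verify $LU=0$ distributionally on $\Omega_1$ by pairing against test functions, applying the first Green formula of Lemma~\ref{eq.dual.Dir} separately on $\Omega_0$ and on $\Omega_1\setminus\overline\Omega_0$, and observing that the boundary terms on $\partial\Omega_0$ cancel because the full Dirichlet datum matches; elliptic regularity then upgrades $U$ to $[H^s(\Omega_1)]^k$. The paper instead applies the \emph{representation} Green formula \eqref{eq.Green.M.two} to $V$ on $\Omega_0$ and to $u$ on $\Omega_1\setminus\overline\Omega_0$, adds the two identities, and uses the matching \eqref{eq.erasing.1} to cancel the $\partial\Omega_0$ potentials, arriving at the explicit expression $U=\mathcal T^{(B)}_{\partial\Omega_1}(\oplus_{j=0}^{2m-1}B_j u)$, which is manifestly in $S_L(\Omega_1)$ as a parameter-dependent integral. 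Your route is more elementary when $s\geq 2m$ (no fundamental solution needed), but, exactly as you anticipate, the first Green formula of Lemma~\ref{eq.dual.Dir} as stated requires $H^m$-regularity on both sides, so for $s<2m$ you would have to pass through the weak boundary value machinery anyway---at which point the paper's potential-theoretic argument, which is already set up for all $s\in\mathbb Z_+$ via \cite[Theorem~2.4]{ShTaLMS}, is the cleaner option and additionally yields an explicit formula for the extension.
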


\begin{proof} The uniqueness of the problem is provided by 
the Unique Continuation Property (US) on $X$. The dense solvability 
follows from the same arguments as in the proof of Theorem \ref{t.Cauchy.M}: 
first,  \cite[Theorem 8.2.2]{Tark36} for $s< 2m$ and 
\cite[Theorems 8.1.2 and 8.1.3]{Tark36} for $s\geq 2m$, implies that 
the space $S_{L} (\overline \Omega_0)$ is everywhere dense in 
$S_{L} (\Omega_0) \cap [H^s (\Omega_0)]^k$ and then 
the Runge type Theorems for solutions to elliptic systems, 
see, for instance, \cite{Me56}	for harmonic functions, 
\cite[Theorems 5.1.11, 5.1.13]{Tark36}, provides that the space 
$S_{L} (\overline \Omega_1)$ is everywhere dense in 
the space $S_{L} (\overline \Omega_0)$, since 
$\Omega_1 \setminus  \Omega_0$ has no compact components in $\Omega_1$. 

If $U$ is a solution to Problem \ref{pr.ext.1} then $U \in [H^{s} (\Omega_1)]^k 
\cap S_{L} (\Omega_1)$, and 
the Cauchy data $\oplus_{j=0}^{2m-1}  B_j U = \oplus_{j=0}^{2m-1}  B_j V $,  
are well-defined on $\partial \Omega_0$.  Obviously the restriction $u = U|\Omega_1 $ is the 
unique solution to Problem \ref{pr.Cauchy.M}.

If $u \in [H^{s} (\Omega_1 \setminus \overline \Omega_0)]^k$ is 
the solution to  Problem \ref{pr.Cauchy.M} with the Cauchy data 
$\oplus_{j=0}^{2m-1}  u_j = \oplus_{j=1}^m  B_j V $,  
 on $\partial \Omega_0$ then the function 
$$
U = \left\{
\begin{array}{lll} V & \rm{in} & \Omega_0, \\
u & \rm{in} & \Omega_1 \setminus \overline \Omega_0
\end{array}
\right.
$$
belongs to $ ([H^{s} (\Omega_0)]^k 
\cap S_{L} (\Omega_0)) \cap 
([H^{s} (\Omega_1 \setminus \overline \Omega_0)]^k 
\cap S_{L} (\Omega_1\setminus \overline  \Omega_0)) $ 
and satisfies 
\begin{equation}\label{eq.erasing.1}
\oplus_{j=0}^{2m-1} B_j U^- =
\oplus_{j=0}^{2m-1} B_j u =  \oplus_{j=0}^{m-1} B_jV  = 
\oplus_{j=0}^{m-1} B_j U^+ \mbox{ on } \partial \Omega_0, 
\end{equation}
 on  $\partial \Omega_0$, too. 
Then, by the  Green formula for solutions to elliptic systems \eqref{eq.Green.M.two}, 
see \cite[Theorem 2.4]{ShTaLMS}, 
\begin{equation}\label{eq.Green.-}
\chi_{\Omega_1\setminus 
\overline \Omega_0} u = {\mathcal T}^{(B)}_{\partial (\Omega_1\setminus 
\overline \Omega_0)} (\oplus_{j=0}^{2m-1}  B_j u) 
\mbox{ in } \Omega_1 \setminus \partial \Omega_0, 
\end{equation}
\begin{equation}\label{eq.Green.+}
\chi_{ \Omega_0}
V = {\mathcal T}^{(B)}_{\partial \Omega_0} (\oplus_{j=0}^{2m-1}  B_j V) 
\end{equation}
and then, adding \eqref{eq.Green.+} to \eqref{eq.Green.-} and  
taking in account relations \eqref{eq.erasing.1} 
and the orientation of the boundaries of the domains 
$\Omega_0$ and  $\Omega_1 \setminus  \Omega_0$ we see that 
\begin{equation}\label{eq.Green.add}
U = {\mathcal T}^{(B)}_{\partial \Omega_1} (\oplus_{j=0}^{2m-1}  B_j u)
\mbox{ in } \Omega_1 \setminus \partial \Omega_0. 
\end{equation}
The right hand side of \eqref{eq.Green.add} belongs to $S_{L} (\Omega_1)$ 
as a parameter depending integral and hence $U \in 
[H^s (\Omega_1)]^k \cap S_{L} (\Omega_1)$ is the unique solution to 
Problem \ref{pr.ext.1}. 
\end{proof}

Let us consider one more "extension problem"{} that can be called an "inner"{} 
Dirichlet problem for a strongly elliptic operator $L$.

\begin{problem} \label{pr.ext.2} 
Given Dirichlet data $\oplus_{j=0}^{m-1}
v_j \in \oplus_{j=0}^{m-1} [H^{s-j-1/2} (\partial \Omega_0)]^k $ 
find, if possible, $v 
\in [H^s (\Omega_1)]^k \cap S_{L} ( \Omega_1)$ such that $
\oplus_{j=0}^{m-1} B_j v = 
\oplus_{j=0}^{m-1} v_j$ on $\partial \Omega_0$.
\end{problem}

Though it depends on the Dirichlet data, this problem is closely  
related to Problems \ref{pr.Cauchy.M} and  \ref{pr.ext.2}. 

\begin{corollary} \label{c.ext.2}
Let $L$ be a strongy elliptic operator such that both $L$ and $L^*$ satisfy 
(US)  on $X$ and $\Omega_0$ be a 
bounded domain. Let also 
$s \in {\mathbb Z}_+$ and $\partial \Omega_0\in C^{\max{(s,2)}}$ 
if $s\geq m$ or $\partial \Omega_0\in C^
\infty$ if $s<m$. 
If \eqref{eq.Dir.Hadamard} holds and 
$\Omega_1 \setminus  \Omega_0$ has no compact components in $\Omega_1$ then 
Problem \ref{pr.ext.2} is densely solvable and 
it has no more than one solution. Moreover, the following 
statements are equivalent: 
\begin{itemize}
\item
Problem \ref{pr.ext.2}  is solvable with the 
data 
$(\oplus_{j=0}^{m-1} v_j) \in \oplus_{j=0}^{m-1} [H^{s-j-1/2} (\partial \Omega_0)]^k $;
\item
Problem \ref{pr.ext.1}  is solvable with $V= {\mathcal P}_{\Omega_0} 
(\oplus_{j=0}^{m-1} v_j)$;
\item 
 Problem \ref{pr.Cauchy.M} 
is solvable for the Cauchy data $\oplus_{j=0}^{2m-1} u_j $ where 
$\oplus _{j=0}^{m-1}u_j =\oplus_{j=0}^{m-1} v_j$ and   
$\oplus_{j=m}^{2m-1} u_{j}= \oplus_{j=m}^{2m-1}  B_j {\mathcal P}_{\Omega_0} 
(\oplus_{j=0}^{m-1} v_j)$ on $\partial \Omega_0$.  
\end{itemize}
\end{corollary}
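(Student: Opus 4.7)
The plan is to route all assertions through the Poisson operator ${\mathcal P}_{\Omega_0}$ from Corollary \ref{c.Dirichlet.Poisson}: under hypothesis \eqref{eq.Dir.Hadamard} the map $\oplus_{j=0}^{m-1} v_j \mapsto V := {\mathcal P}_{\Omega_0}(\oplus_{j=0}^{m-1} v_j)$ is a bounded linear bijection from the Dirichlet data space onto the unique solution in $[H^s(\Omega_0)]^k \cap S_L(\Omega_0)$ of the interior Dirichlet problem. This $V$ serves as the bridge between Problem \ref{pr.ext.2} and Problem \ref{pr.ext.1}, so that Corollary \ref{c.ext.1} and Theorem \ref{t.Cauchy.M} supply everything we need.

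For uniqueness, if $v_1, v_2$ are two candidate solutions then $w = v_1 - v_2 \in [H^s(\Omega_1)]^k \cap S_L(\Omega_1)$ has $B_j w = 0$ on $\partial \Omega_0$ for $0 \leq j \leq m-1$. Restricting to $\Omega_0$ yields $w|_{\Omega_0}$ in the kernel described by \eqref{eq.Dir.Hadamard} (with traces interpreted weakly via the Green formula \eqref{eq.Green.M.B} when $s < m$, exactly as done in the proof of Theorem \ref{t.Cauchy.M}), so $w \equiv 0$ on $\Omega_0$; then the Unique Continuation Property of $L$ on $X$ propagates this to $w \equiv 0$ on $\Omega_1$.

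The three equivalent conditions all go through $V$. If $v$ solves Problem \ref{pr.ext.2}, then $v|_{\Omega_0} \in S_L(\Omega_0) \cap [H^s(\Omega_0)]^k$ realises the Dirichlet data, so by uniqueness in \eqref{eq.Dir.Hadamard} we get $v|_{\Omega_0} = V$, and $v$ solves Problem \ref{pr.ext.1} with that $V$; conversely, any solution $U$ of Problem \ref{pr.ext.1} satisfies $B_j U = B_j V = v_j$ on $\partial \Omega_0$ and hence solves Problem \ref{pr.ext.2}. This yields the equivalence of (1) and (2). For the equivalence of (2) and (3), I apply Corollary \ref{c.ext.1} to $V$: that corollary equates solvability of Problem \ref{pr.ext.1} with solvability of Problem \ref{pr.Cauchy.M} for the Cauchy data $\oplus_{j=0}^{2m-1} B_j V$, and since $B_j V = v_j$ for $0 \leq j \leq m-1$ and $B_j V = B_j {\mathcal P}_{\Omega_0}(\oplus_{i=0}^{m-1} v_i)$ for $m \leq j \leq 2m-1$, this is precisely the data prescribed in (3).

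For dense solvability, I take arbitrary data $\oplus_{j=0}^{m-1} v_j$, form $V$ as above, and use the dense solvability of Problem \ref{pr.ext.1} from Corollary \ref{c.ext.1} to obtain a sequence $U_n \in [H^s(\Omega_1)]^k \cap S_L(\Omega_1)$ with $U_n|_{\Omega_0} \to V$ in $[H^s(\Omega_0)]^k$. Continuity of the trace operators $B_j$ then gives $B_j U_n \to B_j V = v_j$ in $[H^{s-j-1/2}(\partial \Omega_0)]^k$, and each $U_n$ solves Problem \ref{pr.ext.2} with its own data $\oplus_{j=0}^{m-1} B_j U_n$. The main delicate point throughout is the low-regularity case $s < m$, where traces must be interpreted in the weak sense on spaces of elliptic solutions and \eqref{eq.Dir.Hadamard} used accordingly; this is precisely the machinery already deployed in Theorem \ref{t.Cauchy.M}, so the proof requires only careful reuse rather than new technical input.
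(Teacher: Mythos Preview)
Your proof is correct and follows essentially the same route as the paper: both arguments hinge on the Poisson operator ${\mathcal P}_{\Omega_0}$ from Corollary \ref{c.Dirichlet.Poisson} being an isomorphism onto $[H^{s}(\Omega_0)]^k \cap S_L(\Omega_0)$, so that any solution of Problem \ref{pr.ext.2} restricts on $\Omega_0$ to $V={\mathcal P}_{\Omega_0}(\oplus_{j=0}^{m-1} v_j)$, and then uniqueness, dense solvability, and the equivalence with Problem \ref{pr.Cauchy.M} are inherited from Corollary \ref{c.ext.1} and Theorem \ref{t.Cauchy.M}. The only cosmetic difference is that the paper invokes the Poisson isomorphism directly to deduce uniqueness and dense solvability from Corollary \ref{c.ext.1}, whereas you spell out the uniqueness argument via \eqref{eq.Dir.Hadamard} plus (US) and the dense-solvability argument via trace continuity; your remark on weak boundary values should also cover the range $m\leq s<2m$ for the higher-order traces $B_j$, $m\leq j\leq 2m-1$, as the paper notes.
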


\begin{proof} Indeed, according to Corollary \ref{c.Dirichlet.Poisson}, 
the Poisson integral ${\mathcal P}_{\Omega_0}$ induces 
an isomorphism of Banach spaces
$$
{\mathcal P}_{\Omega_0}: \oplus_{j=0}^{m-1} [H^{s-j-1/2} (\partial \Omega_0)]^k
\to [H^{s} (\Omega_0)]^k \cap S_{L} ( \Omega_0).
$$
For this reason any solution $U$ to Problem \ref{pr.ext.2} 
coincides with  ${\mathcal P}_{\Omega_0} 
(\oplus_{j=0}^{m-1} v_j)$ in $\Omega_0$ and this proves the equivalence 
between the first two statements of the corollary. In particular, 
the Uniqueness Theorem for Problem \ref{pr.ext.2} and its  
dense solvability immediately follow from Corollary \ref{c.ext.1}.

Moreover, as the operator 
$$
\oplus_{j=m}^{2m-1} B_{j}
{\mathcal P}_{\Omega_0}: \oplus_{j=0}^{m-1} [H^{s-j-1/2} (\partial \Omega_0)]^k 
\to \oplus_{j=m}^{2m-1} [H^{s-j-1/2} (\partial \Omega_0)]^k
$$
is well-defined (of course, as we mentioned above, it should be understood 
in the sense of weak boundary values for $s<2m$), because of the 
continuity of mapping \eqref{eq.traces.int}, \eqref{eq.traces.ext}; 
it represents the so-called Dirichlet-to-Neumann operator in a sense. 
Combined with Theorem \ref{t.Cauchy.M}, 
this proves the equivalence of the second and the third statements 
of the corollary.  
\end{proof}

Thus, Corollary \ref{c.ext.2} hints us that Problem \ref{pr.ext.1} is 
of key importance for Problems \ref{pr.Cauchy.M} and \ref{pr.ext.2}.
Its solvability conditions and formulas for approximate and exact solutions
may be indicated in terms of the so-called bases with the double orthogonality 
property, see \cite{ShTaLMS} and \cite[Ch. 12]{Tark36}. Hence the Cauchy 
Problem  \ref{pr.Cauchy.M} and interior Dirichlet Problem \ref{pr.ext.2} can 
be investigated with the use of these bases; of course, one may 
elaborate some iteration algorithms, see, for instance, \cite{KMF91} or envoke 
integral representation method, see \cite{A}, \cite{Lv1} or elsewhere.  

We finish the section with another important issue, namely, the conditional well-posedness (see, for instance, 
\cite{TikhArsX}) of the problems considered above.  With this purpose, 
given positive constant $\gamma$, denote by $[H^s_{L,\gamma} (\Omega_1)]^k$
the set of vectors $U 
\in [H^s (\Omega_1)]^k \cap S_{L} ( \Omega_1)$ such that 
$$
\| U\|_{[H^s (\Omega_1)]^k} \leq \gamma.
$$
We consider it as a metric space with the metric induced by 
the norm $\| \cdot \|_{[H^s (\Omega_1)]^k}$.

\begin{corollary} \label{c.well.ext.1} 
Let $L$ be an elliptic operator such that both $L$ and $L^*$ satisfy (US)
on $X$ and $\Omega_0$ be a 
bounded domain. Let also $s \in {\mathbb Z}_+$ and $\partial \Omega_0\in 
C^{\max{(s,2)}}$ 
if $s\geq m$ or $\partial \Omega_0\in C^
\infty$ if $s<m$. If $\Omega_1 \setminus  \Omega_0$ has no compact components in $\Omega_1$ 
then  Problems \ref{pr.Cauchy.M},  \ref{pr.ext.1} 
are conditionally well-posed in the following senses:
\begin{enumerate}
\item
if a sequence $\{U_i\} \subset [H^s_{L,\gamma} (\Omega_1)]^k$
converges to zero in  $[H^s ( \Omega_0)]^k$ then 
$\{U_i\} $ converges to zero in the local space $[H_{\rm loc}^{s} ( \Omega_1 )]^k$ 
and weakly converges to zero in $[H^{s} ( \Omega_1 )]^k$;
\item
if for a sequence $\{u_i\} \subset [H^s_{L,\gamma}(\Omega_1\setminus \overline\Omega_0)]^k$
the sequences $\{B_j u_i\}$ converge to zero in  $[H^{s-j-1/2} (\partial \Omega_0)]^k$ 
for each $j$, $0\leq j\leq 2m-1$, then 
$\{u_i\} $ converges to zero in the space $[H_{\rm loc}^{s} ( \Omega_1 
\setminus \Omega_0)]^k$.
\end{enumerate}
If, moreover, $L$ is a strongly elliptic operator satisfying \eqref{eq.Dir.Hadamard} 
then Problem \ref{pr.ext.2}  is conditionally well-posed in the following sense: 
\begin{enumerate}
\item[(3)]
if for a sequence $\{U_i\} \subset [H^s_{L,\gamma}(\Omega_1)]^k$ the sequences $\{B_j U_i\}$ 
converge to zero in  $[H^{s-j-1/2} (\partial \Omega_0)]^k$ for all $j$, $0\leq j \leq m-1$, 
then $\{U_i\} $ converges to zero in the space $[H_{\rm loc}^{s} ( \Omega_1 )]^k$.
\end{enumerate}
\end{corollary}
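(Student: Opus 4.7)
I would handle the three conclusions by a single weak-compactness/uniqueness argument, treating them in parallel. From any subsequence of the uniformly $[H^s]^k$-bounded sequence I would extract a further sub-subsequence converging weakly in $[H^s]^k$; identify the weak limit as a solution of the corresponding homogeneous \emph{data-free} problem; invoke the uniqueness result proved earlier (Theorem \ref{t.Cauchy.M}, Corollary \ref{c.ext.1}, or Corollary \ref{c.ext.2}, as appropriate) to conclude that this weak limit vanishes; deduce weak convergence of the full sequence to zero in $[H^s]^k$; and finally upgrade this to strong local convergence in $[H^s_{\mathrm{loc}}]^k$ by combining Rellich's compact embedding with the standard interior a priori estimate
\begin{equation*}
\|U\|_{[H^s(K)]^k}\le C\bigl(\|LU\|_{[H^{s-2m}(\Omega')]^k}+\|U\|_{[L^2(\Omega')]^k}\bigr),\qquad K\Subset\Omega'\Subset\Omega_1,
\end{equation*}
which, applied to the solutions $LU_i=0$, collapses to $\|U_i\|_{[H^s(K)]^k}\le C\|U_i\|_{[L^2(\Omega')]^k}$ and so turns the weak (hence $L^2_{\mathrm{loc}}$-compact) convergence of $\{U_i\}$ into strong $H^s_{\mathrm{loc}}$-convergence.

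For item (1), if $U_{i_k}\rightharpoonup U$ weakly in $[H^s(\Omega_1)]^k$, then passing to the limit in $\langle U_i,L^*\phi\rangle=0$ for $\phi\in [C^\infty_{\mathrm{comp}}(\Omega_1)]^k$ yields $LU=0$ distributionally, while weak continuity of the restriction together with the hypothesized strong convergence on $\Omega_0$ gives $U=0$ in $\Omega_0$. Since $\Omega_1$ is a (connected) domain and $L$ satisfies (US), we conclude $U\equiv 0$ in $\Omega_1$. Item (2) proceeds identically once one replaces the vanishing of $U$ on $\Omega_0$ by vanishing of the Cauchy data of the weak limit $u$ on $\partial\Omega_0$: the trace continuity \eqref{eq.traces.ext} gives $B_j u_{i_k}\rightharpoonup B_j u$ weakly in $[H^{s-j-1/2}(\partial\Omega_0)]^k$, which combined with the strong convergence $B_j u_i\to 0$ forces $B_j u=0$ for every $0\le j\le 2m-1$, so the uniqueness clause of Theorem \ref{t.Cauchy.M} gives $u\equiv 0$.

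Item (3) I would reduce directly to item (1). Under \eqref{eq.Dir.Hadamard}, Corollary \ref{c.Dirichlet.Poisson} provides a bounded linear Poisson operator
\begin{equation*}
\mathcal{P}_{\Omega_0}\colon\oplus_{j=0}^{m-1}[H^{s-j-1/2}(\partial\Omega_0)]^k\to[H^s(\Omega_0)]^k\cap S_L(\Omega_0),
\end{equation*}
and uniqueness of the Dirichlet problem in $\Omega_0$ identifies $U_i|_{\Omega_0}=\mathcal{P}_{\Omega_0}(\oplus_{j=0}^{m-1}B_jU_i)$. Continuity of $\mathcal{P}_{\Omega_0}$ and the assumed vanishing of the traces then give $U_i\to 0$ in $[H^s(\Omega_0)]^k$, whereupon item (1) concludes the proof. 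The only mildly delicate point---needed in items (2) and (3) when some orders of $B_j$ exceed $s$---is that the weak boundary values of \cite[\S\S 9.3, 9.4]{Tark97} behave continuously with respect to weak $[H^s]^k$-convergence of solutions; this, however, is already implicit in the proof of Theorem \ref{t.Cauchy.M}, so what remains is essentially bookkeeping.
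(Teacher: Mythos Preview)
Your argument is correct, and for parts (1) and (3) it is essentially identical to the paper's: weak compactness plus the relevant uniqueness theorem to identify the weak limit as zero, then an interior compactness step (you use Rellich plus the a~priori estimate, the paper phrases it as Stieltjes--Vitali, which is the same thing), and for (3) the reduction to (1) via the Poisson operator $\mathcal{P}_{\Omega_0}$ is verbatim what the paper does.

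For part (2) you take a genuinely different and more economical route. The paper does \emph{not} argue directly in the annulus. Instead it invokes the solvability criterion of Theorem~\ref{t.Cauchy.M}: for each $u_i$ it forms the auxiliary function
\[
\mathcal{F}_i=\bigl(\mathcal{T}^{(B)}_{\partial\Omega_0}(\oplus_j B_j u_i)\bigr)^{-}-\chi_{\Omega_1\setminus\overline\Omega_0}\,u_i,
\]
which extends to an element of $[H^s(\Omega_1)]^k\cap S_L(\Omega_1)$; it then shows $\{\mathcal{F}_i\}$ is bounded in $[H^s(\Omega_1)]^k$ and converges to zero in $[H^s(\Omega_0)]^k$ (both via continuity of the potentials \eqref{eq.potentials.cont.+}, \eqref{eq.potentials.cont.-}), applies the already-proved part (1) to $\{\mathcal{F}_i\}$, and reads off the conclusion for $u_i$ from \eqref{eq.sol.Cauchy}. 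Your direct weak-compactness argument in $\Omega_1\setminus\overline\Omega_0$, using the uniqueness clause of Theorem~\ref{t.Cauchy.M} for the weak limit, bypasses this machinery entirely. What the paper's detour buys is that the potential term $(\mathcal{T}^{(B)}_{\partial\Omega_0}(\oplus_j u_{j,i}))^-$ converges to zero \emph{globally} in $[H^s(\Omega_1\setminus\overline\Omega_0)]^k$ while $\mathcal{F}_i\to 0$ in $[H^s_{\mathrm{loc}}(\Omega_1)]^k$, so the conclusion holds on compacts of $\Omega_1\setminus\Omega_0$ that meet $\partial\Omega_0$; your interior estimate, as stated with $K\Subset\Omega'\Subset\Omega_1\setminus\overline\Omega_0$, only yields convergence in $[H^s_{\mathrm{loc}}(\Omega_1\setminus\overline\Omega_0)]^k$. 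If the statement is read literally this is a small gap; it is easily filled, but you should say how (e.g.\ by the same potential decomposition, or by a local boundary estimate near $\partial\Omega_0$ using the strong convergence of the Cauchy data).
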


\begin{proof} We begin with  Problem \ref{pr.ext.1}.  
Actually, the topology of the space 
$[H_{\rm loc}^{s} ( \Omega_1)]^k$ is "metrizable". Namely, 
one may fix  an exhaustion $\{ D_\nu\}$ of the domain $ \Omega_1 $ 
by relatively compact domains $D_\nu \subset \Omega_1$, $\cup _\nu \overline D_\nu = \Omega_1$ 
and define a metric as follows:
$$
\rho_{\Omega_1} (U,V) = \sum_{\nu=1}^\infty \frac{1}{2^\nu} 
\frac{\|U-V\|_{[H^{s} ( D_\nu)]^k}}{1+\|U-V\|_{[H^{s} ( D_\nu)]^k}}.
$$
If a sequence $\{U_i\}$ belongs to $ H^s_{L,\gamma} (\Omega_1)$ then it is 
bounded in this space. Then by the weak compactness principle 
one may extract a subsequence $\{U'_{i}\}$ weakly convergent 
to a vector $U_0 \in [H^{s} ( \Omega_1)]^k$. Now  
Stieltjes-Vitali Theorem for solutions to elliptic systems implies that the 
subsequence $\{U'_{i}\} $ converges to $U_0$, 
in fact, in the space $C^\infty _{\rm loc} (\Omega_1)$; in particular, this means 
$U_0 \in [H^{s} ( \Omega_1)]^k \cap S_{L} (\Omega_1)$.

On the other hand, according to the hypothesis of the theorem it converges 
to zero in the space $[H^{s} ( \Omega_0)]^k$. Hence $U_0 =0$ in 
$\Omega_0$ and, by the Unique Continuation Property, $U_0 \equiv 0$ in $\Omega_1$.

Now, if the sequence $\{U_i\}$ does not converge to zero in 
$[H_{\rm loc}^{s} ( \Omega_1 )]^k$ then there is a positive number $\varepsilon_0$
and a subsequence $\{\tilde U_{i}\} $ such that 
\begin{equation} \label{eq.contr}
\rho_{\Omega_1}(\tilde U_{i},0) \geq \varepsilon_0. 
\end{equation}
Applying to the sequence $\{\tilde U_{i}\} $ the arguments as above 
we may extract a subsequence $\{\tilde U_{i}\}$ weakly convergent to zero in 
$[H^{s} ( \Omega_1)]^k $ and convergent to zero in the space $C^\infty _{\rm loc} (\Omega)$, 
obtaining a contradiction with \eqref{eq.contr}.

Similarly, if the sequence $\{U_i\}$ does not converge to zero weakly in 
$[H^{s} ( \Omega_1 )]^k$ then it has a subsequence weakly converging to a non-zero 
element $U_0 \in [H^{s} ( \Omega_1 )]^k \cap \cap S_{L} (\Omega_1)$. 
Repeating the arguments above we see that $U_0\equiv 0$ in $\Omega_1$, i.e. we arrive at 
the contradiction.

We continue with  Problem \ref{pr.Cauchy.M}. 

If a sequence $\{u_i\}$ belongs to $ [H^s_{L,\gamma} 
(\Omega_1 \setminus \overline\Omega_0)]^k$ 
then it is bounded in this space. Each element $u_i$ of this sequence 
is the unique solution to the Cauchy problem \ref{pr.Cauchy.M} with the data 
$\oplus_{j=0}^{2m-1} u_{j,i} $ with $u_{j,i} = B_j u_i$ 
for $0\leq j \leq 2m-1$. 
Then Theorem \ref{t.Cauchy.M} implies that for the  potential 
$({\mathcal T}^{(B)}_{\partial \Omega_0} (\oplus_{j=0}^{2m-1}  u_{j,i}))^+$ 
there is the unique 
extension ${\mathcal F}_i \in H^s (\Omega_1) \cap S_{L} (\Omega_1)$ in the domain 
$\Omega_1$. Moreover, formula \eqref{eq.sol.Cauchy} means 
that 
\begin{equation} \label{eq.Fnu}
{\mathcal F}_i = ({\mathcal T}^{(B)}_{\partial \Omega_0} (\oplus_{j=0}^{2m-1}  u_{j,i}))^-
- \chi_{\Omega_1\setminus \overline \Omega_0} u_i. 
\end{equation}
If the sequences $\{B_j u_i\}$ converge to zero in  $[H^{s-j-1/2} (\partial \Omega_0)]^k$ 
for each $j$, $0\leq j\leq 2m-1$, then the continuity of the potentials, see 
\eqref{eq.potentials.cont.+}, \eqref{eq.potentials.cont.-}, 
implies
\begin{equation} \label{eq.potential+.to.zero}
({\mathcal T}^{(B)}_{\partial \Omega_0} (\oplus_{j=0}^{2m-1}  u_{j,i}))^+
\to 0 \mbox{ in } H^s (\Omega_0) ,
\end{equation}
\begin{equation} \label{eq.potential-.to.zero}
({\mathcal T}^{(B)}_{\partial \Omega_0} (\oplus_{j=0}^{2m-1}  u_{j,i}))^-
\to 0 \mbox{ in } H^s (\Omega_1 \setminus \overline \Omega_0).
\end{equation}
In particular, $\|{\mathcal F}_i \|_{H^s (\Omega_0) }$ converges to zero. 

On the other hand, 
$$
\|{\mathcal F}_i\|^2_{H^s (\Omega_1) } = 
\|{\mathcal F}_i\|^2_{H^s (\Omega_0) } +
\|{\mathcal F}_i\|^2_{ H^s (\Omega_1 \setminus \overline \Omega_0) }\leq 
$$
$$
\|u_i\|^2_{ H^s (\Omega_1 \setminus \overline \Omega_0) } +
2 \|({\mathcal T}^{(B)}_{\partial \Omega_0} (\oplus_{j=0}^{2m-1}  u_{j,i}))^-
\|^2_{ H^s (\Omega_1 \setminus \overline \Omega_0) } + 
2\|({\mathcal T}^{(B)}_{\partial \Omega_0} (\oplus_{j=0}^{2m-1}  u_{j,i}))^+
\|^2_{ H^s (\Omega_0) }
$$
and then, by \eqref{eq.Fnu}, \eqref{eq.potential+.to.zero}, 
\eqref{eq.potential-.to.zero} and the hypothesis of the corollary, 
the sequence $\{ {\mathcal F}_i \}$ belongs to the space
$H^s_{L,\tilde \gamma}(\Omega_1)$ with some $\tilde \gamma \geq \gamma >0$.

At this point, the already proved part (1) of this corollary 
yields that the sequence $\{{\mathcal F}_i\} $ converges to zero in the local space 
$[H_{\rm loc}^{s} ( \Omega_1 )]^k$.  

Hence, by formula \eqref{eq.sol.Cauchy} and \eqref{eq.potential-.to.zero}, we conclude 
that the sequence $\{ u_i\}$ converges to zero in 
the space $[H_{\rm loc}^{s} ( \Omega_1 \setminus \Omega_0)]^k$.

Finally, for Problem \ref{pr.ext.2} we argue as follows. 
Each element $U_i$ is the unique solution to 
Problem \ref{pr.ext.2}  with the Dirichlet data $\{\oplus_{j=0}^{m-1}
u_{j,i} = \oplus_{j=0}^{m-1}B_j U_i\}$. 
Moreover, by Corollary \ref{c.Dirichlet.Poisson} we have 
$U_i = {\mathcal P}_{\Omega_0} (\oplus_{j=0}^{m-1} u_{j,i})$. 
As the Dirichlet problem \ref{pr.Dir} for the operator $L$ 
is well-posed under the hypothesis of this corollary, we see that ${\mathcal P}_{\Omega_0} (\oplus_{j=0}^{m-1} u_{j,i})$ 
converges to zero in $[H^s (\Omega_0)]^k$ if the sequences $\{ B_j U_i\}$
 converge to zero in  $[H^{s-j-1/2} (\partial \Omega_0)]^k$ for each $j$, $0\leq j \leq m-1$.  
Now, the already proved part (1) of this corollary 
yields that the sequence 
$\{U_i\} $ converges to zero in the local space $[H_{\rm loc}^{s} ( \Omega_1 )]^k$.  
\end{proof}

\section{On representation of solutions 
by the single layer potential }
\label{S.3}

Now we would like to discuss the possibility to 
representation of solutions of second order strongly elliptic 
systems by a single layer potential. For harmonic functions 
of different classes the problem is known since \cite{Ga36} (see also 
 \cite{Solom}, \cite{MH74} and elsewhere). The matter is closely related 
to the theory of multidimensional singular integral equation, see, 
for instance, \cite{Mikhl65}, \cite{Hs91}.

We restrict ourselves with the second order strongly elliptic operator 
with smooth coefficients. Thus, without loss of generality we may assume that 
the operator $L$ is written in the divergence form: 
\begin{equation*} 
L u = - \sum_{i,j=1}^n \partial_j (L_{i,j}  \partial_i u) +
\sum_{i=1}^n L_{i}  \partial _i u + L_0 u
\end{equation*}
with $(k\times k)$-matrices of smooth functions 
$L_{i,j} (x)$, $L_{j} (x)$, $L_{0} (x)$ on $\overline X \subset {\mathbb R}^n$. 
Of course, any generalized Laplacian $A^*A$ is automatically of the divergence form.

Fix a relatively compact domain $\Omega$ in $X$ with the Lipschitz boundary.
Following \cite{Cost88}, we additionally assume that $L$
satisfy following Korn type inequality that is much stronger 
than G\aa{}rding inequality \eqref{eq.Garding} in $\Omega$: 
\begin{equation} \label{eq.Korn}
\|u\|^2_{[H^1 (\Omega)]^k} - c_2 \|u\|^2_{[L^2 (\Omega)]^k}  
 \leq 
\end{equation} 
\begin{equation*}
c_1 \Re{ 
\Big(\sum_{i,j=1}^n (L_{i,j} \partial _i u , \partial_j u )_{[L^2 (\Omega)]^k} +
\big(\sum_{i=1}^n L_{i} \partial _i u + L_0 u,  u \big)_{[L^2 (\Omega)]^k} \Big)} 
\end{equation*}
for all $u \in [H^1 (\Omega)]^k$ 
with some positive constants $c_1$, $c_2$ independent on $u$. 
It is always fulfilled for uniformly strongly elliptic 
scalar operators with real entries. 
For a system $L$ or even for a scalar linear operator with complex-valued 
coefficients \eqref{eq.Korn} can be unaffordable. Indeed, 
for the generalized second order Laplacians $A^*A$ this is equivalent to the following:
\begin{equation} \label{eq.Korn.Laplace}
\|u\|^2_{[H^1 (\Omega)]^k}  \leq c_1 
\|A u \|^2_{[L^2 (\Omega)]^k} + 
c_2 \|u\|^2_{[L^2 (\Omega)]^k} 
\mbox{ for all } u \in [H^1 (\Omega)]^k
\end{equation}
with some positive constants $c_1$, $c_2$ independent on $u$.
Then the presence or absence of \eqref{eq.Korn.Laplace} depends 
on the operator $A$ involved in the factorisation. For instance, 
if $\Delta $ is the usual Laplace operator in ${\mathbb R}^{2}$ 
then $(-\Delta) = \nabla^* \nabla $ and \eqref{eq.Korn.Laplace} holds true.
On the other hand, we also have $(-\Delta) = 4 \overline \partial ^* \overline \partial $
where $\overline \partial = (1/2)(\partial_x + \iota \partial_y)$ 
is the Cauchy-Riemann operator. In this case, \eqref{eq.Korn.Laplace} 
fails on holomorphic functions on $\Omega$ from the Sobolev class $H^1 (\Omega)$. 

The Lam\'e system from the elasticity theory
$
{\mathcal L} = 
-\mu\Delta - (\mu + \lambda)\mathop{\nabla}\operatorname{div}
$ 
(with positive constants $\mu$ and $\lambda$) 
is known to be strongly elliptic (and even a generalized Laplacian) 
and satisfy \eqref{eq.Korn}, see the 
original paper by Korn \cite{Korn08} or classical book \cite{Fi72} by 
G. Fichera or elsewhere. However, similarly to the Laplace operator 
on complex-valued functions, this system admit a factorization that 
does not fit for the Korn inequality, 
see \cite{PeSh15}.

Actually, inequality \eqref{eq.Korn} 
provides that the following Neumann Problem has the Fredholm property 
on the scale of the Sobolev spaces for the domain $\Omega\Subset X$, see, 
for instance, \cite{Simanca1987}.

\begin{problem} \label{pr.Neu}
 Given $k$-vector function
$ u_1 \in [H^{s-3/2} (\partial \Omega)]^k $ 
find, if possible a $k$-vector function $u \in [H^{s} (\Omega)]^k $ such that 
\begin{equation*} 
\left\{ \begin{array}{lll}
L  u =0 & {\rm in} & \Omega,\\
\partial_{L,\nu} u  =u_1
& {\rm on} & \partial \Omega,\\
\end{array}
\right.
\end{equation*}
where 
$
\partial_{L,\nu} = \sum_{i,j=1}^n \nu_j L_{i,j}    \partial _i 
$ 
is the "co-normal derivative"{} with respect to $\partial \Omega$ related to $L$ 
with the exterior unit normal vector $\nu (x)= (\nu_1 (x), \dots \nu_n(x))$ 
 to the surface $\partial \Omega$ at the point $x$. 
\end{problem}
In the case of the usual Laplace operator in ${\mathbb R}^{2}$ and $A=\nabla$ Problem 
\eqref{pr.Neu} is the classical Neumann problem where $B_1$ coincides 
with the normal derivative while the operator 
$A=2\overline \partial $ corresponds to the $\overline \partial$-Neumann problem 
with $\sum_{i,j=1}^n L_{i,j}  \nu_j (x) \partial _i = (\nu_1 + \iota \nu_2)\overline 
\partial$. The  $\overline \partial$-Neumann problem is not Fredholm on the scale 
of the Sobolev spaces because 
the space of solutions of its homogeneous version is precisely infinite-dimensional 
space of holomorphic functions of the corresponding Sobolev class over $D$. 
The normal solvabilty of such non-Fredholm problems 
can be established by the method  of sub-elliptic estimates, see for instance, 
\cite{Kohn79}, \cite{PeSh15}, \cite{ShTaDU}; however it may affect the behaviour of boundary 
integrals in the Green formula \eqref{eq.Green.M.two}. 

In our particular case, for the Dirichlet pairs 
\begin{equation} \label{eq.B.L}
B=\big(B_0=I_k,B_1= \sum_{j=1}^n \nu_j L_j u -\partial_{L,\nu}\big),
\, \tilde B =(\tilde B_0=I_k, \tilde B_1= \partial_{L^*,\nu})
\end{equation}
 (the first) Green formula \eqref{eq.Green.M.B} reads as follows:
\begin{equation*}
\int_{\partial \Omega} \big( (\tilde B_1 v)^* u - v ^* B_1 v
\big) d\sigma = \int_{\Omega} \big( v^*  L u - 
(L^* v)^* u  \big) dx,
\end{equation*}
where 
$$
L^* v= - \sum_{i,j=1}^n \partial_i (L^*_{i,j}  \partial_j v) -
\sum_{i=1}^n \partial _i( L^*_{i}   v) + L^*_0 v.
$$
For the generalized Laplacians $A^*A$ 
the boundary operators \eqref{eq.B.L} are the following:
\begin{equation} \label{eq.B.Laplace}
B_0 = \tilde B_0 =I_k, \, B_1 = \tilde B_1 =\sum_{j=1}^n A_j^* \nu_j A.
\end{equation}

As before, we assume that both the operators $L$ and $L^*$ 
possess the Unique continuation property (US) on $X$. Then 
the potential ${\mathcal T}_S (u_0,u_1)$ corresponding to $L$ and 
the Dirichlet pairs \eqref{eq.B.L} can be present as the sum 
\begin{equation} \label{eq.T.m=1}
{\mathcal T}^{(B)}_S (u_0,u_1) (x) = {\mathcal V}^{(B)}_{S} (u_1) 
+ {\mathcal W}^{(B)}_{S} (u_0)
\end{equation}
where  
\begin{equation} \label{eq.V.m=1}
{\mathcal V}^{(B)}_{S} ( u_1) = \int_S 
\Phi (x,y)  u_1 (y)
d\sigma (y) 
\end{equation}
is the single layer potential and
\begin{equation} \label{eq.W.m=1}
{\mathcal W}^{(B)}_{S} (u_0) = -\int_S 
(\partial_{L^*,\nu (y)} \Phi^* (x,y) )^* u_0 (y)
d\sigma (y)
\end{equation}
is the double layer potential. Note that 
in some book the classical double layer 
potential related to the Laplace operator is defined without the sign "-"; this 
affects on the sign in formula \eqref{eq.T.m=1}, see, for instance, \cite{rjasanow2007fast}.

 We will be concentrated 
on domains with Lipschitz boundaries as the most important case for applications.

Let $\Omega $ be a relatively compact domains in $X$ with Lipschitz 
boundary. According to \cite[Theorem 1]{Cost88} the operators 
\begin{equation} \label{eq.VB}
{\mathcal V}^{(B)}_{\partial \Omega} :
[H^{s-3/2} (\partial \Omega)]^k \to [H^{s} 
( \Omega)]^k
\end{equation}
\begin{equation} \label{eq.WB}
{\mathcal W}^{(B)}_{\partial \Omega} :
[H^{s-1/2} (\partial \Omega)]^k \to [H^{s} 
( \Omega)]^k
\end{equation}
\begin{equation} \label{eq.integral}
B_0 {\mathcal V}^{(B)}_{\partial \Omega} :
[H^{s-3/2} (\partial \Omega)]^k \to [H^{s-1/2} 
(\partial \Omega)]^k
\end{equation}
\begin{equation} \label{eq.integral.W}
B_0 {\mathcal W}^{(B)}_{\partial \Omega} :
[H^{s-1/2} (\partial \Omega)]^k \to [H^{s-1/2} 
(\partial \Omega)]^k
\end{equation} 
are bounded for $s\in (\frac{1}{2}, \frac{3}{2})$ 
if  $L$ is a second order strongly elliptic operator satisfying 
\eqref{eq.Dir.Hadamard}, \eqref{eq.Korn}. Moreover, taking in account Remark \ref{r.m=1} we conclude that for any $s\in (\frac{1}{2}, \frac{3}{2})$  and $u\in [H^s (\Omega)]^k \cap 
S_L (\Omega)$ the (representation) Green formula 
\begin{equation}
\label{eq.Green.Lip}
\chi_{\Omega} u = {\mathcal W}^{(B)}_{\partial \Omega} B_0 u + 
{\mathcal V}^{(B)}_{\partial \Omega} B_1 u
\end{equation}
is still valid for the Lipschitz domain $\Omega$ under the assumptions above because 
the the following boundary operator can be treated as continuous 
\begin{equation}
\label{eq.B1.Lip}
B_1  :[H^s (\Omega)]^k \cap S_L (\Omega) \to [H^{s-3/2} (\partial \Omega)]^k,
\end{equation}
see \cite[Lemma 3.7]{Cost88}.

This allows to 
extend Theorem \ref{t.Cauchy.M} to the case of Lipschitz boundaries for $m=1$.

\begin{theorem} \label{t.Cauchy.M.Lip} 
 Let $L$ be a second order strongly  elliptic operator such that both 
$L$ and $L^*$ satisfy (US) on $X$ and satisfy  
\eqref{eq.Dir.Hadamard}, \eqref{eq.Korn}. Let also $s\in (\frac{1}{2}, \frac{3}{2})$ and let 
$\Omega_0$ be a bounded Lipschitz domain in $X$. 
If $\Omega_1 \setminus  \Omega_0$ has no compact components in $\Omega_1$ then 
Problem \ref{pr.Cauchy.M} is densely solvable and 
it has no more than one solution. It 
is solvable if and only if there is 
a function ${\mathcal F}\in [H^s (\Omega_1)]^k\cap S_{L}(\Omega_1)$ 
and such that 
$$
{\mathcal F}= 
({\mathcal W}^{(B)}_{\partial \Omega_0} (u_0))^+ + 
({\mathcal V}^{(B)}_{\partial \Omega_0} (u_1))^+
\mbox{ in } \Omega_0.
$$
Besides, the solution $u$, if exists, is given by the following formula:
\begin{equation} \label{eq.sol.Cauchy.Lip}
u  = ({\mathcal W}^{(B)}_{\partial \Omega_0} (u_0))^- + 
({\mathcal V}^{(B)}_{\partial \Omega_0} (u_1))^-
- {\mathcal F} \mbox{ in } \Omega_1 \setminus \overline \Omega_0.
\end{equation}
\end{theorem}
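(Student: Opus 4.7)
The plan is to mirror, line by line, the proof of Theorem \ref{t.Cauchy.M}, replacing each smooth-boundary ingredient by its Lipschitz counterpart already established in the excerpt. The four tools to be redeployed are: the representation Green formula \eqref{eq.Green.Lip}, the splitting \eqref{eq.T.m=1} of $\mathcal{T}^{(B)}_{\partial\Omega_0}$ into $\mathcal{W}^{(B)}_{\partial\Omega_0} + \mathcal{V}^{(B)}_{\partial\Omega_0}$, the potential continuities \eqref{eq.VB}--\eqref{eq.integral.W}, and the conormal trace continuity \eqref{eq.B1.Lip}. Remark \ref{r.m=1} together with Costabel's framework guarantees that everything used in Theorem \ref{t.Cauchy.M} has a valid analogue for Lipschitz $\Omega_0$ and $s\in(1/2,3/2)$, so the structural skeleton of the earlier proof transfers and only the references at each step change.

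For uniqueness, if $u_0=u_1=0$, the conormal trace continuity \eqref{eq.B1.Lip} gives the weak boundary values of $u$ from the exterior side of $\partial\Omega_0$; the extension by zero of $u$ across $\partial\Omega_0$ then has matching traces from both sides and defines a weak solution of $Lv=0$ on $\Omega_1$, so (US) forces $u\equiv 0$. For the necessity of the criterion and the explicit formula, apply \eqref{eq.Green.Lip} in $\Omega_1\setminus\overline{\Omega_0}$ with the correct orientation of its boundary and set $\mathcal{F}:=\mathcal{T}^{(B)}_{\partial\Omega_1}(Bu)$; the continuities \eqref{eq.VB}--\eqref{eq.WB} for a potential supported on the outer boundary place $\mathcal{F}$ in $[H^s(\Omega_1)]^k\cap S_L(\Omega_1)$. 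A second application of \eqref{eq.Green.Lip}, this time inside $\Omega_0$, together with the decomposition \eqref{eq.T.m=1}, yields $\mathcal{F}|_{\Omega_0}=(\mathcal{W}^{(B)}_{\partial\Omega_0}(u_0))^+ + (\mathcal{V}^{(B)}_{\partial\Omega_0}(u_1))^+$, and rearranging the Green identity produces formula \eqref{eq.sol.Cauchy.Lip}.

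For sufficiency, given $\mathcal{F}$ satisfying the stated condition, define $u$ by \eqref{eq.sol.Cauchy.Lip}; each term is in $S_L(\Omega_1\setminus\overline{\Omega_0})$, so $Lu=0$ there. The Cauchy conditions $B_j u = u_j$ ($j=0,1$) on $\partial\Omega_0$ follow from the Lipschitz jump relations for the single and double layer potentials (the $m=1$ instance of \eqref{eq.jump}) combined with the identity $\mathcal{F}|_{\Omega_0}=(\mathcal{W}^{(B)}_{\partial\Omega_0}(u_0))^+ + (\mathcal{V}^{(B)}_{\partial\Omega_0}(u_1))^+$ and the fact that $\mathcal{F}\in S_L(\Omega_1)$ has no jump across $\partial\Omega_0$: the interior trace of the two potentials cancels with the trace of $\mathcal{F}$ from inside $\Omega_0$, so only the jump of the potentials survives and equals $u_j$. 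Dense solvability is obtained by the three-step argument of Theorem \ref{t.Cauchy.M}: density of $S_L(\overline{\Omega_1\setminus\Omega_0})$ in $S_L(\Omega_1\setminus\overline{\Omega_0})\cap [H^s(\Omega_1\setminus\overline{\Omega_0})]^k$ from \cite[Ch.~8]{Tark36}, Runge-type density of $S_L(\overline{\Omega_1})$ in $S_L(\overline{\Omega_0})$ (valid because $\Omega_1\setminus\Omega_0$ has no compact components in $\Omega_1$), and the jump relations to transfer this density to Cauchy data.

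The main obstacle is internal consistency of the functional framework at the low regularity $s\in(1/2,3/2)$ over a Lipschitz hypersurface: the two-sided traces $B_0 u$ and $B_1 u$, the value $\mathcal{F}|_{\partial\Omega_0}$, and the jump identities must all be interpreted in the same weak boundary-value sense, and the integration by parts in the Green formula \eqref{eq.Green.Lip} must remain valid in this sense. Everything of this kind is exactly what Costabel's \cite{Cost88} establishes for second-order strongly elliptic systems under \eqref{eq.Korn}, and which Remark \ref{r.m=1} licenses us to use; the proof therefore reduces to quoting the Lipschitz version of each ingredient at the corresponding place in the proof of Theorem \ref{t.Cauchy.M}.
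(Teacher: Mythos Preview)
Your proposal is correct and follows essentially the same route as the paper: both proofs declare that the argument of Theorem~\ref{t.Cauchy.M} carries over once each smooth-boundary ingredient is replaced by its Lipschitz analogue from Costabel~\cite{Cost88}, with the necessity direction handled by writing $\mathcal{F}$ as the $\partial\Omega_1$-potential via the Green formula~\eqref{eq.Green.Lip} and the sufficiency direction handled by the Lipschitz jump relations~\eqref{eq.jump.V2}. You are in fact more explicit than the paper about uniqueness and dense solvability (the paper's proof treats only the solvability criterion and leaves the rest under ``follows the same way''); for the density step in the Lipschitz setting note that the precise reference is \cite[Corollary~8.4.2]{Tark36} via the strong cone property, as the paper invokes in the proof of Corollary~\ref{c.ext.1.Lip}.
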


\begin{proof}  The proof actually follows the same way as 
the proofs of Theorem \ref{t.Cauchy.M}. Namely, if the solution $u$ exists then 
$$
{\mathcal F}= ({\mathcal W}^{(B)}_{\partial \Omega_0} (u_0)) + 
({\mathcal V}^{(B)}_{\partial \Omega_0} (u_1)) -\chi_{\Omega_1 \setminus \overline 
\Omega_0} u = -({\mathcal W}^{(B)}_{\partial \Omega_1} (B_0 u)) - 
({\mathcal V}^{(B)}_{\partial \Omega_1} (B_1 u))
$$
that is obviously belongs to $[H^s (\Omega_1)]^k\cap S_{L}(\Omega_1)$.

Back, if there is 
a function ${\mathcal F}\in [H^s (\Omega_1)]^k\cap S_{L}(\Omega_1)$  
we may invoke the jump formula \eqref{eq.jump} for potentials. In this particular case  
they are still valid for Lipschitz surfaces and they have the following  form:
\begin{equation} \label{eq.jump.V2} 
B_1 ({\mathcal V}^{(B)}_{\partial \Omega_0} (u_1)
)^- - B_1 ({
\mathcal V}^{(B)}_{\partial \Omega_0} (u_1)
)^+ =u_{1} \mbox{ on } \partial \Omega, \, 
\end{equation}
\begin{equation*} 
B_0({\mathcal V}^{(B)}_{\partial \Omega_0})(u_1))^- 
-  B_0({\mathcal V}^{(B)}_{\partial \Omega_0} ( u_1))^+ =0 \mbox{ on } \partial \Omega_0,
\end{equation*}
\begin{equation*} 
B_0({\mathcal W}^{(B)}_{\partial \Omega_0} (u_0)
)^- -  B_0({\mathcal W}^{(B)}_{\partial \Omega_0}
(u_0) )^+ =u_0 \mbox{ on } \partial \Omega_0, 
\end{equation*}
\begin{equation*} 
 B_1 ({\mathcal W}^{(B)}_{\partial \Omega_0}
(u_0) )^- -  B_1  ({\mathcal W}^{(B)}_{\partial \Omega_0}
(u_0) )^+ =0  \mbox{ on } \partial \Omega_0
\end{equation*}
see \cite[Lemma 4.1]{Cost88}. According to them the function $u$ defined by 
\eqref{eq.sol.Cauchy.Lip} is the solution to Problem \ref{pr.Cauchy.M}.
\end{proof}

Now we may extend Corollary \ref{c.ext.1} to the Lipschitz domains for $m=1$.

\begin{corollary} \label{c.ext.1.Lip}
 Let $L$ be a second order strongly  elliptic operator such that both 
$L$ and $L^*$ satisfy (US) on $X$ and satisfy  
\eqref{eq.Dir.Hadamard}, \eqref{eq.Korn}. Let also $s\in (\frac{1}{2}, \frac{3}{2})$ and let 
$\Omega_j$ be  bounded Lipschitz domains in $X$. 
If $\Omega_1 \setminus  \Omega_0$ has no compact components in $\Omega_1$ then 
Problem \ref{pr.ext.1} is densely solvable and 
it has no more than one solution. It 
is solvable if and only if  Problem \ref{pr.Cauchy.M} 
is solvable for the data $u_0 =B_0V$ and $u_1 =B_1 V$ 
on $\partial \Omega_0$. 
\end{corollary}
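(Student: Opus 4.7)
The plan is to mirror the proof of Corollary \ref{c.ext.1} step by step, replacing Theorem \ref{t.Cauchy.M} with its Lipschitz analogue Theorem \ref{t.Cauchy.M.Lip} and invoking the Green formula \eqref{eq.Green.Lip} together with the continuity \eqref{eq.B1.Lip} whenever traces on $\partial\Omega_0$ appear. Uniqueness is immediate: any two solutions to Problem \ref{pr.ext.1} agree on $\Omega_0$ by definition, hence on all of $\Omega_1$ by the Unique Continuation Property (US).

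For dense solvability I would reproduce, almost verbatim, the density chain used in Corollary \ref{c.ext.1}: first, $S_L(\overline{\Omega}_0)$ is dense in $S_L(\Omega_0) \cap [H^s(\Omega_0)]^k$ by \cite[Theorem~8.2.2]{Tark36} (valid in the range $s<2m=2$ we are considering, and insensitive to Lipschitz boundaries); second, the Runge-type theorem \cite[Theorems 5.1.11, 5.1.13]{Tark36} yields density of $S_L(\overline{\Omega}_1)$ in $S_L(\overline{\Omega}_0)$ since $\Omega_1\setminus\Omega_0$ has no compact components in $\Omega_1$. Composing these densities gives the claim.

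The solvability equivalence I would establish in two directions. If $U$ solves Problem \ref{pr.ext.1} with $U=V$ in $\Omega_0$, then $u:=U|_{\Omega_1\setminus\overline{\Omega}_0}$ lies in $[H^s(\Omega_1\setminus\overline{\Omega}_0)]^k\cap S_L(\Omega_1\setminus\overline{\Omega}_0)$, and by the continuity of $B_0$ and by \eqref{eq.B1.Lip} both $B_0 u = B_0 V$ and $B_1 u = B_1 V$ on $\partial\Omega_0$, so $u$ solves Problem \ref{pr.Cauchy.M} for the indicated data. Conversely, given such a $u$, I would set
\[
U = \begin{cases} V & \text{in } \Omega_0, \\ u & \text{in } \Omega_1\setminus\overline{\Omega}_0, \end{cases}
\]
and apply \eqref{eq.Green.Lip} separately on the Lipschitz domains $\Omega_0$ and $\Omega_1\setminus\overline{\Omega}_0$. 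Adding the two representations, and using the opposite induced orientations on $\partial\Omega_0$ together with the matching $B_0 u = B_0 V$, $B_1 u = B_1 V$, the contributions from $\partial\Omega_0$ cancel and one is left with a sum of single- and double-layer potentials over $\partial\Omega_1$, which by \eqref{eq.VB}--\eqref{eq.WB} belongs to $[H^s(\Omega_1)]^k \cap S_L(\Omega_1)$ and coincides with $U$ in $\Omega_1\setminus\partial\Omega_0$.

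The main obstacle is the trace management at low Sobolev regularity $s\in(1/2,3/2)$: the conormal trace $B_1$ is not the straightforward restriction and must be interpreted via \eqref{eq.B1.Lip}, and one must check that this weak-trace interpretation is compatible with the Green formula \eqref{eq.Green.Lip} on each side of the Lipschitz cut $\partial\Omega_0$, so that the cancellation step is rigorously justified. Once this technical point is granted (it is precisely where \cite[Theorem~1]{Cost88} and \cite[Lemma 3.7]{Cost88} do the work for us), the remainder of the proof is the same algebraic manipulation as in the smooth case.
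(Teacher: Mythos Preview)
Your overall strategy is exactly the paper's: mirror the proof of Corollary~\ref{c.ext.1}, with the Lipschitz versions of the Green formula and the potential/trace continuity results from \cite{Cost88} supplying the technical input. The uniqueness and the two-direction solvability equivalence are carried out just as you describe.

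The one point where your sketch and the paper diverge is the dense solvability step. You propose to reuse \cite[Theorem 8.2.2]{Tark36} and the Runge-type results \cite[Theorems 5.1.11, 5.1.13]{Tark36}, asserting that the former is ``insensitive to Lipschitz boundaries.'' The paper, however, flags exactly this step as the place where the smooth-boundary argument must be \emph{corrected}: it replaces the appeal to \cite[Theorem 8.2.2]{Tark36} by the observation that a bounded Lipschitz domain enjoys the strong cone property (\cite[Exercise 1.4.10]{Tark36}), and then invokes \cite[Corollary 8.4.2]{Tark36} to conclude that $[H^s(\Omega_1)]^k\cap S_L(\Omega_1)$ is dense in $[H^s(\Omega_0)]^k\cap S_L(\Omega_0)$. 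So either you should verify that \cite[Theorem 8.2.2]{Tark36} genuinely applies under the sole Lipschitz hypothesis (the paper's phrasing suggests the authors did not think so), or you should switch to the strong-cone-property route the paper uses. Apart from this citation-level issue, your proof is the same as the paper's.
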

 
\begin{proof} Actually, all the arguments are the same as in the proof 
of Corollary \ref{c.ext.1}: we need to correct then in the part 
related to the dense solvability. But according to \cite[Exercise 1.4.10]{Tark36} 
the bounded Lipschitz domain $\Omega_0$ has the so-called \textit{strong cone property}
and then $[H^s (\Omega_1)]^k\cap S_L (\Omega_1)$ is dense in 
$[H^s (\Omega_0)]^k\cap S_L (\Omega_0)$ because of  
\cite[Corollary 8.4.2]{Tark36}.
\end{proof}

Thus we arrive at the end of our discussion of Problems 
\ref{pr.Cauchy.M} \ref{pr.ext.1}, and \ref{pr.ext.2} for $m=1$ and Lipschitz boundaries.

\begin{corollary} \label{c.ext.2.Lip}
Let $L$ be a second order strongly  elliptic operator such that both 
$L$ and $L^*$ satisfy (US) on $X$ and satisfy  
\eqref{eq.Dir.Hadamard}, \eqref{eq.Korn}. Let also $s\in (\frac{1}{2}, \frac{3}{2})$ and let 
$\Omega_j$ be  bounded Lipschitz domains in $X$. 
If 
$\Omega_1 \setminus  \Omega_0$ has no compact components in $\Omega_1$ then 
Problem \ref{pr.ext.2} is densely solvable and 
it has no more than one solution. Moreover, the following 
statements are equivalent: 
\begin{itemize}
\item
Problem \ref{pr.ext.2}  is solvable with the 
data 
$v_0 \in  [H^{s-1/2} (\partial \Omega_0)]^k $;
\item
Problem \ref{pr.ext.1}  is solvable with $V= {\mathcal P}_{\Omega_0} 
(v_0)$;
\item 
 Problem \ref{pr.Cauchy.M} 
is solvable for the Cauchy data $u_0$, $u_1$  where $u_0=v_0$ 
 and   $u_1 = B_1 {\mathcal P}_{\Omega_0} (v_0)$ on $\partial \Omega_0$.  
\end{itemize}
\end{corollary}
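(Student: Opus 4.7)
The plan is to mirror the proof of Corollary \ref{c.ext.2}, systematically replacing each tool from the smooth setting by its Lipschitz counterpart developed earlier in Section \ref{S.3}. The hypotheses of this corollary are precisely those under which Remark \ref{r.m=1} applies, so Corollary \ref{c.Dirichlet.Poisson} extends verbatim to the Lipschitz domain $\Omega_0$ for $m=1$. In particular, for $s\in(\tfrac{1}{2},\tfrac{3}{2})$ the Poisson-type operator
\[
\mathcal{P}_{\Omega_0}: [H^{s-1/2}(\partial \Omega_0)]^k \to [H^{s}(\Omega_0)]^k \cap S_L(\Omega_0)
\]
is a topological isomorphism of Banach spaces. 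This single fact does most of the bookkeeping.

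Next I would establish the equivalence of the first two bullets. If $v \in [H^s(\Omega_1)]^k \cap S_L(\Omega_1)$ solves Problem \ref{pr.ext.2} with datum $v_0$, then $v|_{\Omega_0}$ is an element of $[H^s(\Omega_0)]^k\cap S_L(\Omega_0)$ whose trace under $B_0$ is $v_0$; by uniqueness in the Lipschitz interior Dirichlet problem (Remark \ref{r.m=1}) we must have $v|_{\Omega_0}=\mathcal{P}_{\Omega_0}(v_0)$, so $v$ is a solution of Problem \ref{pr.ext.1} with $V=\mathcal{P}_{\Omega_0}(v_0)$. Conversely, any solution $U$ of that instance of Problem \ref{pr.ext.1} satisfies $B_0 U = B_0 V = v_0$ on $\partial\Omega_0$, hence solves Problem \ref{pr.ext.2}. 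This equivalence, together with Corollary \ref{c.ext.1.Lip}, transfers the uniqueness and the dense solvability statements to Problem \ref{pr.ext.2}: uniqueness follows from (US) applied to the difference of two candidate solutions, and density follows from the density part of Corollary \ref{c.ext.1.Lip} combined with the isomorphism $\mathcal{P}_{\Omega_0}$ which sends dense subsets of the data space to dense subsets of $[H^s(\Omega_0)]^k\cap S_L(\Omega_0)$.

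For the equivalence of the second and third bullets I would invoke Theorem \ref{t.Cauchy.M.Lip}. The key ingredient is that the Dirichlet-to-Neumann type operator
\[
B_1\,\mathcal{P}_{\Omega_0}:[H^{s-1/2}(\partial\Omega_0)]^k \to [H^{s-3/2}(\partial\Omega_0)]^k
\]
is well-defined and continuous, which follows from the continuity of \eqref{eq.B1.Lip} (i.e. \cite[Lemma 3.7]{Cost88}) composed with the bounded Poisson operator. Given this, the Cauchy data $(u_0,u_1)=(v_0,B_1\mathcal{P}_{\Omega_0}(v_0))$ is precisely the Cauchy data $(B_0 V, B_1 V)$ of the candidate extension $V=\mathcal{P}_{\Omega_0}(v_0)$. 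Then Corollary \ref{c.ext.1.Lip} identifies solvability of Problem \ref{pr.ext.1} with this $V$ and solvability of Problem \ref{pr.Cauchy.M} with this Cauchy data, closing the chain.

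The main obstacle I expect is at the low-regularity boundary level: on a Lipschitz surface and with $s<3/2$, the co-normal trace $B_1\mathcal P_{\Omega_0}(v_0)$ must be interpreted in the sense of weak boundary values, and one needs to confirm that the Green formula \eqref{eq.Green.Lip}, the jump relations \eqref{eq.jump.V2}, and the mapping \eqref{eq.B1.Lip} are compatible enough to make the three formulations genuinely equivalent rather than merely formally so. Once this is verified by quoting Costabel's framework used in Theorem \ref{t.Cauchy.M.Lip} and Corollary \ref{c.ext.1.Lip}, the rest of the argument is a direct transcription of the proof of Corollary \ref{c.ext.2}.
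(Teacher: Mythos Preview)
Your proposal is correct and follows essentially the same route as the paper: the paper states Corollary \ref{c.ext.2.Lip} without giving an explicit proof, tacitly relying on a verbatim transcription of the proof of Corollary \ref{c.ext.2} with the smooth-boundary ingredients replaced by their Lipschitz counterparts (Remark \ref{r.m=1}, Theorem \ref{t.Cauchy.M.Lip}, Corollary \ref{c.ext.1.Lip}, and the continuity \eqref{eq.B1.Lip}). Your identification of the Poisson isomorphism as the bridge between the first two bullets and of Corollary \ref{c.ext.1.Lip} as the bridge between the last two is exactly what is intended, and your caveat about interpreting $B_1\mathcal P_{\Omega_0}(v_0)$ in the weak sense via Costabel's framework is the only point requiring care.
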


\begin{corollary} \label{c.well.ext.1.Lip} 
Let $L$ be a second order strongly  elliptic operator such that both 
$L$ and $L^*$ satisfy (US) on $X$ and satisfy  
\eqref{eq.Dir.Hadamard}, \eqref{eq.Korn}. Let also $s\in (\frac{1}{2}, \frac{3}{2})$ and let 
$\Omega_j$ be  bounded Lipschitz domains in $X$.
 If $\Omega_1 \setminus  \Omega_0$ has no compact components in $\Omega_1$ 
then  Problems \ref{pr.Cauchy.M},  \ref{pr.ext.1} 
are conditionally well-posed in the following senses:
\begin{enumerate}
\item
if a sequence $\{U_i\} \subset [H^s_{L,\gamma} (\Omega_1)]^k$
converges to zero in  $[H^s ( \Omega_0)]^k$ then 
$\{U_i\} $ converges to zero in the local space $[H_{\rm loc}^{s} ( \Omega_1 )]^k$ 
and weakly converges to zero in $[H^{s} ( \Omega_1 )]^k$;
\item
if for a sequence $\{u_i\} \subset [H^s_{L,\gamma}(\Omega_1\setminus \overline\Omega_0)]^k$
the sequences $\{B_j u_i\}$ converge to zero in  $[H^{s-j-1/2} (\partial \Omega_0)]^k$ 
for each $j$, $0\leq j\leq 2m-1$, then 
$\{u_i\} $ converges to zero in the space $[H_{\rm loc}^{s} ( \Omega_1 
\setminus \Omega_0)]^k$.
\end{enumerate}
If, moreover, $L$  satisfies \eqref{eq.Dir.Hadamard} 
then Problem \ref{pr.ext.2}  is conditionally well-posed in the following sense: 
\begin{enumerate}
\item[(3)]
if for a sequence $\{U_i\} \subset [H^s_{L,\gamma}(\Omega_1)]^k$ the sequences $\{B_j U_i\}$ 
converge to zero in  $[H^{s-j-1/2} (\partial \Omega_0)]^k$ for all $j$, $0\leq j \leq m-1$, 
then $\{U_i\} $ converges to zero in the space $[H_{\rm loc}^{s} ( \Omega_1 )]^k$.
\end{enumerate}
\end{corollary}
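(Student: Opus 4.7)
The plan is to follow line by line the structure of the proof of Corollary \ref{c.well.ext.1}, simply replacing every appeal to smooth-boundary tools by its Lipschitz counterpart supplied in this section.

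First I would handle statement (1), which is purely an \emph{interior} argument and therefore essentially insensitive to the boundary regularity. Metrize $[H^s_{\mathrm{loc}}(\Omega_1)]^k$ via an exhaustion $\{D_\nu\} \Subset \Omega_1$ exactly as before. A sequence $\{U_i\} \subset [H^s_{L,\gamma}(\Omega_1)]^k$ is norm-bounded, so by the weak compactness principle one may extract a subsequence weakly convergent in $[H^s(\Omega_1)]^k$ to some $U_0$. Since $S_L(\Omega_1)$ is weakly closed (distributions pass to weak limits), $U_0 \in [H^s(\Omega_1)]^k \cap S_L(\Omega_1)$, and the Stieltjes--Vitali theorem for elliptic systems upgrades weak convergence to $C^\infty_{\mathrm{loc}}(\Omega_1)$ convergence; in particular, the convergence is in $[H^s(D_\nu)]^k$ for every $\nu$. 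The hypothesis $U_i \to 0$ in $[H^s(\Omega_0)]^k$ forces $U_0 \equiv 0$ on $\Omega_0$, and then the property (US) gives $U_0 \equiv 0$ on $\Omega_1$. A standard subsequence-of-subsequence contradiction argument (as in the proof of Corollary \ref{c.well.ext.1}) upgrades this to convergence of the whole sequence, both in $[H^s_{\mathrm{loc}}(\Omega_1)]^k$ and weakly in $[H^s(\Omega_1)]^k$.

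For statement (2) I would use Theorem \ref{t.Cauchy.M.Lip} to produce, for each $u_i$, an extension ${\mathcal F}_i \in [H^s(\Omega_1)]^k \cap S_L(\Omega_1)$ given by
\begin{equation*}
{\mathcal F}_i = ({\mathcal W}^{(B)}_{\partial \Omega_0}(B_0 u_i))^- + ({\mathcal V}^{(B)}_{\partial \Omega_0}(B_1 u_i))^- - \chi_{\Omega_1\setminus \overline\Omega_0}\, u_i.
\end{equation*}
The continuity of the layer potentials \eqref{eq.VB}, \eqref{eq.WB} (valid for the Lipschitz domain under the Korn-type assumption \eqref{eq.Korn}) and the hypothesis $B_j u_i \to 0$ in $[H^{s-j-1/2}(\partial \Omega_0)]^k$ imply that the single- and double-layer terms tend to zero in $[H^s(\Omega_0)]^k$; since $u_i$ is supported outside $\overline\Omega_0$, it follows that ${\mathcal F}_i \to 0$ in $[H^s(\Omega_0)]^k$. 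The same continuity estimates bound $\|{\mathcal F}_i\|_{[H^s(\Omega_1)]^k}$ by $\gamma$ plus a quantity depending on the boundary data, so $\{{\mathcal F}_i\}$ lies in $[H^s_{L,\tilde\gamma}(\Omega_1)]^k$ for some $\tilde\gamma \geq \gamma$. Applying the already established part (1) to $\{{\mathcal F}_i\}$ gives ${\mathcal F}_i \to 0$ in $[H^s_{\mathrm{loc}}(\Omega_1)]^k$, and then the representation \eqref{eq.sol.Cauchy.Lip} together with the $[H^s(\Omega_1\setminus\overline\Omega_0)]^k$-convergence to zero of the layer potentials yields $u_i \to 0$ in $[H^s_{\mathrm{loc}}(\Omega_1 \setminus \Omega_0)]^k$.

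For statement (3) I would invoke the Lipschitz-adapted version of Corollary \ref{c.Dirichlet.Poisson} made available by Remark \ref{r.m=1}, which provides a bounded Poisson operator ${\mathcal P}_{\Omega_0} : [H^{s-1/2}(\partial \Omega_0)]^k \to [H^s(\Omega_0)]^k \cap S_L(\Omega_0)$. Each $U_i$ solves Problem \ref{pr.ext.2} with data $v_0 = B_0 U_i$, so $U_i|_{\Omega_0} = {\mathcal P}_{\Omega_0}(B_0 U_i)$; continuity of ${\mathcal P}_{\Omega_0}$ together with $B_0 U_i \to 0$ in $[H^{s-1/2}(\partial \Omega_0)]^k$ gives $U_i \to 0$ in $[H^s(\Omega_0)]^k$, and part (1) then closes the argument.

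The main obstacle I anticipate is purely bookkeeping: checking that all the Lipschitz-domain continuity statements for the layer potentials, for the Poisson operator, and for the trace operators are indeed available in the range $s\in (\tfrac12,\tfrac32)$ under \eqref{eq.Dir.Hadamard} and \eqref{eq.Korn}. Once these are in place, no new analytic ingredient beyond weak compactness plus Stieltjes--Vitali plus (US) is needed, and the argument is essentially a mechanical transcription of the proof of Corollary \ref{c.well.ext.1}.
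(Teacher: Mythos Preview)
Your proposal is correct and is precisely the approach the paper intends: the paper states Corollary \ref{c.well.ext.1.Lip} without proof, leaving it as the evident transcription of the proof of Corollary \ref{c.well.ext.1} with the smooth-boundary potential and trace estimates replaced by their Lipschitz counterparts from Costabel \cite{Cost88} (i.e.\ \eqref{eq.VB}--\eqref{eq.integral.W}, \eqref{eq.B1.Lip}) and with Theorem \ref{t.Cauchy.M.Lip} and Remark \ref{r.m=1} in place of Theorem \ref{t.Cauchy.M} and Corollary \ref{c.Dirichlet.Poisson}. Your identification of the only real work as bookkeeping the availability of these Lipschitz tools in the range $s\in(\tfrac12,\tfrac32)$ is exactly right.
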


Let us formulate the main result of this section related 
to the problem of representation of solutions to $L$ 
as a single layer potential.
The following  expectable 
 and rather known statement 
  follows almost immediately  from \cite[Theorem 2 and 3]{Cost88}.

\begin{theorem} \label{t.single.layer}
Let $\Omega $ be a relatively compact domains in $X$ with Lipschitz 
boundary and $L$ be a second order strongly elliptic operator satisfying 
\eqref{eq.Dir.Hadamard}, \eqref{eq.Korn} and 
\begin{equation} \label{eq.Dir.Hadamard.Omega}
[H^{1}_0 (\Omega)]^k
\cap S_{L} (\Omega)= [H^1_0 (\Omega)]^k
\cap S_{L^*} (\Omega) =0. 
\end{equation} 
 If operator \eqref{eq.integral} is injective 
then for any 
$u\in [H^{s} (\Omega)]^k\cap S_L (\Omega)$, $s\in [1,\frac{3}{2}) $, 
there is a unique function $u_1 \in [H^{s-3/2} (\partial \Omega)]^k$ such that 
\begin{equation} \label{eq.single}
u = {\mathcal V}^{(B)}_{\partial \Omega} (u_1) \mbox{ in } \Omega 
\end{equation}
related to $L$ and Dirichlet pairs \eqref{eq.B.L}. 
Moreover, if 
$\partial \Omega$ is a surface  of class $C^\infty$, 
statements is still true for all $s \in \mathbb N$. 
\end{theorem}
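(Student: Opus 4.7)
The plan is to reduce the existence of the single-layer representation to the solvability of a boundary integral equation and then invoke uniqueness of the Dirichlet problem for $L$. Concretely, if a density $u_1$ satisfies \eqref{eq.single}, then by the continuity of the boundary trace \eqref{eq.integral} it is necessary that
$$
B_0\,{\mathcal V}^{(B)}_{\partial \Omega}(u_1) = B_0 u \quad\text{on }\partial \Omega.
$$
Conversely, suppose $u_1$ solves this boundary equation. Set $w := u - {\mathcal V}^{(B)}_{\partial \Omega}(u_1)$. By \eqref{eq.VB} and the hypothesis $u\in [H^{s}(\Omega)]^k\cap S_L(\Omega)$, the function $w$ lies in $[H^{s}(\Omega)]^k\cap S_L(\Omega)$ and satisfies $B_0 w = 0$ on $\partial \Omega$; by \eqref{eq.Dir.Hadamard.Omega} and Corollary \ref{c.Dirichlet.Poisson} the Dirichlet problem for $L$ on $\Omega$ has trivial kernel, hence $w\equiv 0$ in $\Omega$. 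Thus both existence and uniqueness of $u_1$ are reduced to the invertibility of the boundary integral operator $B_0\,{\mathcal V}^{(B)}_{\partial \Omega}$.

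The second step is to establish that invertibility. By \cite[Theorem 2]{Cost88}, under the assumptions of strong ellipticity, \eqref{eq.Korn}, \eqref{eq.Dir.Hadamard} and \eqref{eq.Dir.Hadamard.Omega}, the operator
$$
B_0\,{\mathcal V}^{(B)}_{\partial \Omega}: [H^{-1/2}(\partial \Omega)]^k \to [H^{1/2}(\partial \Omega)]^k
$$
is Fredholm of index zero; combined with the injectivity hypothesis this yields an isomorphism for $s=1$. For $s\in(1,\tfrac{3}{2})$ one shifts in the Sobolev scale via the continuity of \eqref{eq.integral} together with \cite[Theorem 3]{Cost88}, so that $B_0\,{\mathcal V}^{(B)}_{\partial \Omega}:[H^{s-3/2}(\partial \Omega)]^k\to [H^{s-1/2}(\partial \Omega)]^k$ remains an isomorphism throughout the range $s\in[1,\tfrac{3}{2})$. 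Applied to the datum $B_0 u\in [H^{s-1/2}(\partial \Omega)]^k$ (continuous by \eqref{eq.B1.Lip} and the trace theorem), this produces the unique density $u_1\in [H^{s-3/2}(\partial \Omega)]^k$, and the first step finishes the proof in the Lipschitz setting.

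For the smooth boundary case, I would argue that $B_0\,{\mathcal V}^{(B)}_{\partial \Omega}$ is a classical pseudodifferential operator of order $-1$ on $\partial \Omega$, whose principal symbol is elliptic because of the strong ellipticity of $L$ and of \eqref{eq.Korn}. Then the isomorphism already obtained at the energy level extends by standard elliptic regularity on closed manifolds to an isomorphism $[H^{s-3/2}(\partial \Omega)]^k\to[H^{s-1/2}(\partial \Omega)]^k$ for every $s\in\mathbb N$, and the same Dirichlet uniqueness argument as above yields the representation \eqref{eq.single}. The main obstacle is the second step: one must propagate the Fredholm-index-zero property of $B_0\,{\mathcal V}^{(B)}_{\partial \Omega}$ from the natural $s=1$ energy space to the full range $s\in[1,\tfrac{3}{2})$ on a merely Lipschitz boundary, where pseudodifferential techniques are unavailable and one must rely on Costabel's jump relations and the Rellich-type identities underlying \cite[Theorems 2, 3]{Cost88}.
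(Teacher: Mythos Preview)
Your proposal is correct and follows essentially the same route as the paper: reduce \eqref{eq.single} to the boundary equation $B_0\,{\mathcal V}^{(B)}_{\partial\Omega}(u_1)=B_0 u$ via Dirichlet uniqueness (Corollary \ref{c.Dirichlet.Poisson} and \eqref{eq.Dir.Hadamard.Omega}), then use Costabel's strong ellipticity estimate \cite[Theorem~2]{Cost88} and regularity \cite[Theorem~3]{Cost88} to get Fredholmness on the whole scale $s\in[1,\tfrac32)$, and conclude invertibility from the injectivity hypothesis; for $\partial\Omega\in C^\infty$ both you and the paper invoke the pseudodifferential calculus/transmission property to push to all $s\in\mathbb N$. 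The only cosmetic difference is that the paper spells out the two-sided parametrix identities \eqref{eq.param.s.left}--\eqref{eq.param.s.right} explicitly, whereas you phrase the same content as ``Fredholm of index zero plus regularity shift''; the obstacle you flag in your last paragraph is exactly what those identities (i.e.\ \cite[Theorems~2,~3]{Cost88}) resolve.
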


\begin{proof} We begin the discussion for domains Lipschitz 
boundaries.  First, we note that  the operator \eqref{eq.integral}
is strongly elliptic in the following sense (\cite[Theorem 2]{Cost88}):
there is a compact operator $K: [H^{-1/2} (\partial \Omega)]^k \to [H^{1/2} 
( \partial \Omega)]^k$ and a positive constant $C$ such that
\begin{equation} \label{eq.ell.est}
\Re{\langle (B_0 {\mathcal V}^{(B)}_{\partial \Omega} + K)w,w \rangle }
\geq C \,\|w\|^2_{[H^{-1/2} (\partial \Omega)]^k}
\mbox{ for all } w \in [H^{-1/2} (\partial \Omega)]^k
\end{equation}
where the brackets $\langle \cdot , \cdot\rangle$ denote the natural duality pairing 
between the Sobolev space $[H^{s} (\partial \Omega)]^k$ and its dual.

As it is well-known, estimate \eqref{eq.ell.est} is equivalent to 
the fact that the operator $B_0 {\mathcal V}^{(B)}_{\partial \Omega}$ 
admits a parametrix on the space $[H^{-1/2} (\partial \Omega)]^k$, i.e. there are compact 
operators $ {\mathcal K }^{(l)} : [H^{-1/2} (\partial \Omega)]^k \to [H^{-1/2} 
( \partial \Omega)]^k$, 
$ {\mathcal K }^{(r)} : [H^{1/2} (\partial \Omega)]^k \to [H^{1/2} 
( \partial \Omega)]^k$   and a bounded 
operator $ {\mathcal Q} : [H^{1/2} (\partial \Omega)]^k \to [H^{-1/2} 
( \partial \Omega)]^k$ such that 
\begin{equation*} 
Q (B_0 {\mathcal V}^{(B)}_{\partial \Omega}) + 
{\mathcal K }^{(l)}  = I \mbox{ on } 
[H^{-1/2} (\partial \Omega)]^k, 
\end{equation*}
\begin{equation*} 
(B_0 {\mathcal V}^{(B)}_{\partial \Omega}) Q  + 
{\mathcal K }^{(r)}  = I \mbox{ on } 
[H^{1/2} (\partial \Omega)]^k.
\end{equation*}

Second, the regularity results (\cite[Theorem 3]{Cost88})
allow us to define a parametrix for the operator 
$B_0 {\mathcal V}^{(B)}_{\partial \Omega}$  on the scale 
$[H^{s-3/2} (\partial \Omega)]^k \to [H^{s} ( \Omega)]^k$, $s\in (\frac{1}{2}, \frac{3}{2})$, 
i.e. there are compact 
operators $ {\mathcal K }_s ^{(l)}: [H^{s-3/2} (\partial \Omega)]^k \to [H^{s-3/2} 
( \partial \Omega)]^k$ , 
$ {\mathcal K }_s ^{(r)}: [H^{s-1/2} (\partial \Omega)]^k \to [H^{s-1/2} 
( \partial \Omega)]^k$   and a bounded 
operator $ {\mathcal Q} : [H^{s-1/2} (\partial \Omega)]^k \to [H^{s-3/2} 
( \partial \Omega)]^k$ such that 
\begin{equation} \label{eq.param.s.left}
 {\mathcal Q}_s (B_0 {\mathcal V}^{(B)}_{\partial \Omega}) + 
{\mathcal K }^{(l)}_s  = I \mbox{ on } 
[H^{s-3/2} (\partial \Omega)]^k,
\end{equation}
\begin{equation} \label{eq.param.s.right}
 (B_0 {\mathcal V}^{(B)}_{\partial \Omega})  {\mathcal Q}_s + 
{\mathcal K }_s ^{(r)} = I \mbox{ on } 
[H^{s-1/2} (\partial \Omega)]^k.
\end{equation}

Next,  formula \eqref{eq.Dir.Hadamard.Omega} and Corollary \ref{c.Dirichlet.Poisson}
imply that a function 
$u \in S_L (\Omega) \cap [H^{s} (\Omega)]^k$ 
satisfy \eqref{eq.single}
with a function $u_1\in [H^{s-3/2} (\partial \Omega)]^k$ if and only if
\begin{equation} \label{eq.single.boundary}
B_0 u = B_0 {\mathcal V}^{(B)}_{\partial \Omega} (u_1) \mbox{ on } \partial \Omega .
\end{equation}

According to \eqref{eq.param.s.left}, \eqref{eq.param.s.right}, on the space 
$[H^{s-3/2} (\partial \Omega)]^k$ we have
$$
B_0 {\mathcal V}^{(B)}_{\partial \Omega} =
B_0 {\mathcal V}^{(B)}_{\partial \Omega} \Big(
 {\mathcal Q}_s (B_0 {\mathcal V}^{(B)}_{\partial \Omega}) + 
{\mathcal K }^{(l)}_s  \Big)=
$$
$$
I+
B_0 {\mathcal V}^{(B)}_{\partial \Omega} {\mathcal K }^{(l)}_s + 
{\mathcal K }^{(r)}_s B_0 {\mathcal V}^{(B)}_{\partial \Omega}.
$$
Thus, since compositions of compact and bounded operators are compact, we see
that  equation \eqref{eq.single.boundary} reduces to a Fredholm operator equation. 

Hence, it follows from Fredholm alternative that the operator 
\eqref{eq.integral} is continuously invertible if and only if it is injective 
for $s \in [1, \frac{3}{2})$. Thus, as \eqref{eq.integral} is injective, 
we conclude that equation \eqref{eq.single.boundary} 
is always uniquely solvable in $[H^{s-3/2} (\partial \Omega)]^k$ with $s \in [1,
\frac{3}{2})$.

Finally,  if 
$\partial \Omega$ is a surface  of class $C^\infty$ 
then the trace operators \eqref{eq.trace.B_j} are continuous for all $s >1/2$.  
The potential operator \eqref{eq.VB} is bounded 
this $s>1/2$ too, because of theorems related the pseudo differential operators satisfying the 
so-called transmission property, see, for instance, 
\cite[\S 2.3.2.5 ]{RS82} or \cite[\S 2.4]{Tark36}. 

Hence operator 
\eqref{eq.integral} is continuous and 
the statement follows from the standard scheme of improvement 
of the regularity for solutions to elliptic pseudo-differential equations,  
that was to be proved. Actually, the statement of the theorem follows 
from more general results on strongly elliptic 
boundary operators by M. Costabel and W. L. Wendland \cite[Theorem 3.7]{CoWe86}.
\end{proof}

Let us indicate important cases where the operator \eqref{eq.integral} is 
injective.

\begin{theorem} \label{t.single.layer.Green}
Let $\Omega \Subset D$ be a relatively compact domains in $X$ with Lipschitz 
boundary and $L$ be a second order strongly elliptic operator satisfying 
\eqref{eq.Dir.Hadamard}, \eqref{eq.Korn}, \eqref{eq.Dir.Hadamard.Omega} and 
\begin{equation} \label{eq.Dir.Hadamard.Omega.compl}
[H^1_0 (D\setminus \overline \Omega)]^k
\cap S_{L} (D\setminus \overline  \Omega)= [H^1
_0 (D\setminus \overline  \Omega)]^k
\cap S_{L^*} (D\setminus \overline  \Omega) =0. 
\end{equation} 
 If $\Phi= {\mathcal G}_D$ is the 
Green function of the Dirichlet Problem \ref{pr.Dir} for $L$ in $D$ then for any 
$u\in [H^{s} (\Omega)]^k\cap S_L (\Omega)$, $s\in [1,\frac{3}{2}) $, 
there is a unique function $u_1 \in [H^{s-3/2} 
(\partial \Omega)]^k$ such that \eqref{eq.single} is fulfilled. 
Moreover, if 
$\partial \Omega$ is a surface  of class $C^\infty$, then the 
statements is still true for this $s \in \mathbb N$.
\end{theorem}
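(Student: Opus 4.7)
The plan is to leverage Theorem \ref{t.single.layer}: it suffices to verify that the boundary integral operator $B_0 \mathcal{V}^{(B)}_{\partial\Omega}$ appearing in \eqref{eq.integral} is injective when $\Phi$ is taken to be the Green function $\mathcal{G}_D$ of the Dirichlet problem for $L$ on the enclosing domain $D$. Once injectivity is established, the representation \eqref{eq.single}, with the claimed regularity scale $s \in [1, 3/2)$ (and for all $s \in \mathbb{N}$ when $\partial\Omega \in C^\infty$), is an immediate consequence of the theorem just proved.

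To prove injectivity, suppose $u_1 \in [H^{s-3/2}(\partial\Omega)]^k$ satisfies $B_0 \mathcal{V}^{(B)}_{\partial\Omega}(u_1) = 0$ on $\partial\Omega$. I form the single layer $v = \mathcal{V}^{(B)}_{\partial\Omega}(u_1)$, regarded now as a function on all of $D$. Because $\mathcal{G}_D$ vanishes on $\partial D$ (being the Dirichlet Green function), $v$ also vanishes on $\partial D$; and away from $\partial\Omega$ the integral satisfies $Lv = 0$ by differentiation under the integral sign. Writing $v^+$ and $v^-$ for the restrictions to $\Omega$ and to $D \setminus \overline{\Omega}$, respectively, continuity of the single layer across $\partial\Omega$ (the $B_0$-jump formula, the analogue of \eqref{eq.jump.V2} for $B_0$) gives $B_0 v^+ = B_0 v^- = B_0 \mathcal{V}^{(B)}_{\partial\Omega}(u_1) = 0$ on $\partial\Omega$.

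Next I apply the uniqueness hypotheses \eqref{eq.Dir.Hadamard.Omega} and \eqref{eq.Dir.Hadamard.Omega.compl} in turn. On the inside, $v^+ \in [H^s(\Omega)]^k \cap S_L(\Omega) \subset [H^1(\Omega)]^k$ with $B_0 v^+ = 0$ on $\partial\Omega$; so by Corollary \ref{c.Dirichlet.Hodge} (or Corollary \ref{c.Dirichlet.Poisson}), condition \eqref{eq.Dir.Hadamard.Omega} forces $v^+ \equiv 0$ in $\Omega$. On the outside, $v^- \in [H^s(D\setminus\overline\Omega)]^k \cap S_L(D\setminus\overline\Omega)$ vanishes on both components of $\partial(D \setminus \overline{\Omega}) = \partial D \cup \partial\Omega$ in the $B_0$-sense, so $v^- \in [H^1_0(D \setminus \overline\Omega)]^k \cap S_L(D\setminus\overline\Omega)$, and condition \eqref{eq.Dir.Hadamard.Omega.compl} forces $v^- \equiv 0$ in $D \setminus \overline{\Omega}$ as well.

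Finally, the $B_1$-jump relation \eqref{eq.jump.V2} for the single layer gives $u_1 = B_1 v^- - B_1 v^+ = 0$ on $\partial\Omega$, proving injectivity. The only delicate point to handle carefully is the use of the jump and continuity relations across the Lipschitz surface $\partial\Omega$ at the low regularity $s \in [1, 3/2)$, and the fact that $B_1 v^\pm$ is well-defined by the continuity of operator \eqref{eq.B1.Lip}; these are already available from the Costabel calculus cited before Theorem \ref{t.Cauchy.M.Lip}. Once injectivity is in hand, Theorem \ref{t.single.layer} yields both the existence and uniqueness of $u_1$ for $s \in [1, 3/2)$, and the improvement to integer $s$ under $C^\infty$ boundary follows from the pseudodifferential regularity argument already used at the end of the proof of Theorem \ref{t.single.layer}.
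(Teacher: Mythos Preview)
Your proposal is correct and follows essentially the same route as the paper: reduce to Theorem~\ref{t.single.layer} by proving injectivity of $B_0\mathcal{V}^{(B)}_{\partial\Omega}$, and obtain injectivity by showing that the single layer potential of a kernel element vanishes in $\Omega$ (via \eqref{eq.Dir.Hadamard.Omega}) and in $D\setminus\overline\Omega$ (via the Green-function boundary condition on $\partial D$ together with \eqref{eq.Dir.Hadamard.Omega.compl}), then applying the $B_1$-jump formula \eqref{eq.jump.V2}. Your write-up is in fact slightly more explicit than the paper's about the continuity of $B_0\mathcal{V}^{(B)}_{\partial\Omega}$ across $\partial\Omega$ and about why $B_1 v^\pm$ makes sense at low regularity.
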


\begin{proof} Let us establish that operator \eqref{eq.integral} is 
injective under the hypothesis of this corollary. 

Let a function $v \in [H^{s-3/2} (\partial \Omega)]^k$ belongs to  
the kernel of the operator \eqref{eq.integral}. In order to prove that $v$ equals to 
zero we invoke jump formulas \eqref{eq.jump.V2}

As $L {\mathcal V}^{(B)}_{\partial \Omega} (v) = 0$ in $D \setminus \partial \Omega$ 
and $B_0 {\mathcal V}^{(B)}_{\partial \Omega} (v)_{|\Omega} = 0$ on $\partial \Omega$,  
we see that  $({\mathcal V}^{(B)}_{\partial \Omega} (v))_{|\Omega} \equiv 0$ in $\Omega$ 
because \eqref{eq.Dir.Hadamard.Omega} and Corollary \ref{c.Dirichlet.Poisson}. 
Similarly, since $\Phi$ is the Green function of the 
Dirichlet Problem \ref{pr.Dir} for $L$ in $D$ we see that 
$({\mathcal V}^{(B)}_{\partial \Omega} (v)_{|D \setminus \overline \Omega} $ 
is the solution to the homogeneous Dirichlet problem for $L$ in 
$D \setminus \overline \Omega$. Then \eqref{eq.Dir.Hadamard.Omega.compl} and Corollary 
\ref{c.Dirichlet.Poisson} yield
$({\mathcal V}^{(B)}_{\partial \Omega} (v))_{|
D \setminus \overline \Omega} \equiv 0$ in $D \setminus \overline \Omega$.
Now jump formula \eqref{eq.jump.V2} (applied to $\partial \Omega$ instead of 
$\partial \Omega_0$) implies that $v=0$ on $\partial \Omega$, 
i.e. the operator \eqref{eq.integral} is injective.
Finally, the Fredholm alternative mean that equation \eqref{eq.single.boundary} 
is always uniquely solvable in $[H^{s-3/2} (\partial \Omega)]^k$ with $s \in [1,
\frac{3}{2})$,  that was to be proved.
\end{proof}

For the generalized Laplacians the statement of Theorem 
\ref{t.single.layer.Green} can be 
formulated much shorter.

\begin{corollary} \label{c.single.layer}
Let $\Omega \Subset D$ be a relatively compact domain in $X$ with Lipschitz 
boundary and $A$ be an operator with injective principal symbol satisfying (US)
on $X$. If  $L= A^*A$  satisfies \eqref{eq.Korn.Laplace} and 
$\Phi= {\mathcal G}_D$ is the 
Green function of the Dirichlet Problem \ref{pr.Dir} for $A^*A$ in $D$ then for any 
$u\in [H^{s} (\Omega)]^k\cap S_{A^*A} (\Omega)$, $s\in [1, \frac{3}{2}) $, 
there is a function $u_1 \in [H^{s-3/2} 
(\partial \Omega)]^k$ such that  \eqref{eq.single} holds true. 
Moreover, if 
$\partial \Omega$ is a surface  of class $C^\infty$,  then the 
statements is still true for this $s \in \mathbb N$.
\end{corollary}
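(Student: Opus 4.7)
The plan is to reduce the corollary to a direct application of Theorem \ref{t.single.layer.Green}, verifying each of the hypotheses of that theorem in the special situation of a generalized Laplacian $L = A^*A$. Since the two statements have essentially the same conclusion, the bulk of the work is checking that the hypotheses listed in Theorem \ref{t.single.layer.Green} follow automatically in the present setting from the assumption that $A$ has injective principal symbol and satisfies (US).

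First I would recall that for $L = A^*A$ one has $L^* = L$, so it suffices to check the Dirichlet-kernel conditions for $L$ only. The Korn-type hypothesis \eqref{eq.Korn} required by Theorem \ref{t.single.layer.Green} coincides, for generalized Laplacians, with the assumed inequality \eqref{eq.Korn.Laplace}, as remarked in the text preceding Problem \ref{pr.Neu}; that is one assumption immediately discharged. The existence of the Green function $\mathcal{G}_D$ is part of the setup: by Corollary \ref{c.Dirichlet.Hodge} applied to $A^*A$ on $D$, the property (US) for $A$ together with the Korn inequality forces uniqueness of the Dirichlet problem in $D$, so the Hodge parametrix coincides with the Green function and is a bilateral fundamental solution. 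Thus the triviality condition \eqref{eq.Dir.Hadamard} on $D$ is built into the hypotheses.

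The crucial remaining step is to verify the interior and exterior Hadamard conditions \eqref{eq.Dir.Hadamard.Omega} and \eqref{eq.Dir.Hadamard.Omega.compl}. For this I would use the standard zero-extension argument: if $u \in [H^1_0(\Omega)]^k \cap S_L(\Omega)$, Corollary \ref{c.Dirichlet.Hodge} gives $Au = 0$ in $\Omega$, and extending $u$ by zero outside $\Omega$ yields an element $\tilde u \in [H^1(X)]^k$ (this is valid precisely because $u$ lies in $H^1_0$ of a Lipschitz domain) with $A\tilde u = 0$ on $X$ in the distributional sense and with $\tilde u \equiv 0$ on the open set $X \setminus \overline\Omega$. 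The Unique Continuation Property (US) for $A$ then forces $\tilde u \equiv 0$, hence $u = 0$. The same argument applied on $D \setminus \overline \Omega$ (extending by zero through $\partial\Omega$ into $\Omega$) yields \eqref{eq.Dir.Hadamard.Omega.compl}. I expect this part to be the main technical point, but it is a routine application of (US) combined with the $H^1_0$ extension by zero.

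With all the hypotheses of Theorem \ref{t.single.layer.Green} in place, I would then simply invoke that theorem to conclude the existence of the single-layer density $u_1 \in [H^{s-3/2}(\partial \Omega)]^k$ satisfying \eqref{eq.single} for $s \in [1,\tfrac{3}{2})$, and in the smooth-boundary case for all $s \in \mathbb{N}$. Note that the uniqueness of $u_1$ claimed in Theorem \ref{t.single.layer.Green} should be mentioned if desired, since it is automatic here from the same injectivity of the boundary operator \eqref{eq.integral} that the theorem establishes; no further argument is needed.
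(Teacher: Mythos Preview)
Your proposal is correct and follows essentially the same route as the paper: the paper's proof simply observes that for $L=A^*A$ with $A$ satisfying (US), Corollary \ref{c.Dirichlet.Hodge} (extended to Lipschitz domains via Remark \ref{r.m=1}) yields \eqref{eq.Dir.Hadamard}, \eqref{eq.Dir.Hadamard.Omega}, \eqref{eq.Dir.Hadamard.Omega.compl}, and that \eqref{eq.Korn.Laplace} is equivalent to \eqref{eq.Korn}, so the result follows from Theorem \ref{t.single.layer.Green}. Your zero-extension argument is exactly what underlies Corollary \ref{c.Dirichlet.Hodge}, so you are spelling out what the paper cites as a black box; the only cosmetic difference is that the paper (somewhat loosely) cites Theorem \ref{t.single.layer} rather than Theorem \ref{t.single.layer.Green} at the end.
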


\begin{proof}
If $L=A^*A$ with a first order operator $A$ having 
injective principal symbol and satisfying (US)
on $X$ then \eqref{eq.Dir.Hadamard}, 
\eqref{eq.Dir.Hadamard.Omega} and \eqref{eq.Dir.Hadamard.Omega.compl}  hold true, 
see Corollary \ref{c.Dirichlet.Hodge}. As 
\eqref{eq.Korn.Laplace} is equivalent to \eqref{eq.Korn} for $L=A^*A$, 
we conclude that the statement of the corollary follows from 
Theorem \ref{t.single.layer}.
\end{proof}

As it is well known, in general, the solvability of the corresponding 
boundary singular integral equations on $\partial \Omega $ is closely related to solvability 
of some "interior"{}  and "exterior"{} boundary problems 
for the elliptic operator, see, for instance, \cite{Vla} for classical solutions  
or \cite{FJR78}, \cite{CoWe86} for the Sobolev type spaces. 
In our particular case it is an 
"exterior"{} Dirichlet problem, as 
Theorem \ref{t.single.layer.Green} and Corollary \ref{c.single.layer} confirm. 
%
However an "exterior"{} Dirichlet problem for general strongly elliptic 
operator $L$ on ${\mathbb R}^n\setminus \Omega$ can 
be rather complicated even in the case of scalar operators, see, 
for instance, \cite{MeSe60}.

At the end of this section, let us consider the classical case 
of second order elliptic operators with constant coefficients in ${\mathbb R}^n$. 
In this case $L$ admits a bilateral fundamental solution of the convolution type, 
say $\Phi(x-y)$, $x,y \in {\mathbb R}^n$, $x\ne y$, and hence it is 
natural to consider potentials constructed with the use of this kernel.
We will consider a rather particular situation.  
where $n\geq 2$ and $A = \sum_{j=1}^n A_j \partial_j$ is a homogeneous 
first order  operator with constant coefficients in ${\mathbb R}^n$ 
having injective principal symbol. 
Then its Laplacian $A^*A$ 
admits the fundamental solution of the convolution type in the form 
\begin{equation} \label{eq.Phi.homo.0}
\Phi (x) = a\Big( \frac{x}{|x|}\Big)|x|^{2-n} + b(x) \ln{|x|}
\end{equation}
where  $a(\zeta)$ is a $(k\times k)$-matrix of real analytic functions in a neighbourhood of 
the sphere $\{|\zeta|=1\}$ and $b(x)$ is a $(k\times k)$-matrix of polynomials $a_{p,q} (\zeta)$
of order $(2-n)$, see, for instance, \cite[\S 2.2.2]{Tark97}. This means that 
$b \equiv 0$ and 
\begin{equation} \label{eq.Phi.homo}
\Phi (x) = a\Big( \frac{x}{|x|}\Big)|x|^{2-n} \mbox{ for } n\geq 3, 
\end{equation}
and $b$ is a matrix with constant entries for $n=2$.

\begin{corollary} \label{c.single.layer.const}
Let $\Omega $ be a relatively compact domain in $X$ with Lipschitz 
boundary in ${\mathbb R}^n$, $n\geq 3$ and $A$ be a first order 
homogeneous operator with constant coefficients 
having injective principal symbol and satisfying 
 \eqref{eq.Korn}. 
 If $\Phi$ is given by \eqref{eq.Phi.homo}
then for any 
$u\in [H^{s} (\Omega)]^k\cap S_L (\Omega)$, $s\in [1,\frac{3}{2}) $, 
there is a unique function $u_1 \in [H^{s-3/2} 
(\partial \Omega)]^k$ such that \eqref{eq.single} holds true. 
Moreover, if 
$\partial \Omega$ is a surface  of class $C^\infty$,  then the 
statements is still true for all $s \in \mathbb N$.
\end{corollary}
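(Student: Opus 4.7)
The plan is to apply Theorem~\ref{t.single.layer} directly, so the only nontrivial task is to verify that the boundary integral operator \eqref{eq.integral} is injective when $\Phi$ is taken as in \eqref{eq.Phi.homo}. The remaining hypotheses of Theorem~\ref{t.single.layer} come for free: since $A$ has injective principal symbol and constant coefficients, $L=A^*A$ has real analytic coefficients, both $A$ and $L$ satisfy (US), and Corollary~\ref{c.Dirichlet.Hodge} supplies \eqref{eq.Dir.Hadamard} and \eqref{eq.Dir.Hadamard.Omega}; inequality \eqref{eq.Korn.Laplace} is equivalent to the assumed \eqref{eq.Korn} in the generalized-Laplacian case.

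To prove injectivity, we take $v\in[H^{s-3/2}(\partial\Omega)]^k$ with $B_0{\mathcal V}^{(B)}_{\partial\Omega}(v)=0$ on $\partial\Omega$ and set $U={\mathcal V}^{(B)}_{\partial\Omega}(v)$. Because the density is supported on the compact set $\partial\Omega$, the explicit form \eqref{eq.Phi.homo} yields the decay
\[
|U(x)|=O(|x|^{2-n}),\qquad |AU(x)|=O(|x|^{1-n})\qquad\text{as }|x|\to\infty.
\]
Inside $\Omega$ the function $U$ solves the homogeneous Dirichlet problem for $L$, so Corollary~\ref{c.Dirichlet.Hodge} forces $U\equiv 0$ in $\Omega$. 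Continuity of the single layer across $\partial\Omega$ (one of the jump identities grouped with \eqref{eq.jump.V2}) then makes $U$ vanish on $\partial\Omega$ from the exterior side as well.

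The main step, and the expected obstacle, is the exterior uniqueness: we must show that $U\equiv 0$ in $\mathbb{R}^n\setminus\overline\Omega$. The plan is a direct energy argument. Writing $L=A^*A$ and applying the first Green formula on $B_R\setminus\overline\Omega$ gives
\[
0=\int_{B_R\setminus\overline\Omega} U^*LU\,dx=\int_{B_R\setminus\overline\Omega}|AU|^2\,dx\;-\;\text{(boundary terms on }\partial\Omega\cup\partial B_R).
\]
The boundary contribution on $\partial\Omega$ vanishes because $U=0$ there. On $\partial B_R$ each term is controlled by $|U|\,|AU|=O(R^{3-2n})$, so integration against the surface area $O(R^{n-1})$ yields a total of size $O(R^{2-n})\to 0$ as $R\to\infty$; this is precisely where the hypothesis $n\geq 3$ is used, since it excludes the logarithmic term in \eqref{eq.Phi.homo.0} and supplies enough decay at infinity. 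In the limit we obtain $AU\equiv 0$ in $\mathbb{R}^n\setminus\overline\Omega$.

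Finally, since $U\in H^1_{\mathrm{loc}}(\mathbb{R}^n)$, vanishes identically on $\Omega$, and is continuous across $\partial\Omega$, the equation $AU=0$ holds distributionally on all of $\mathbb{R}^n$. The (US) property of $A$, applied to a solution that is zero on the open set $\Omega$, forces $U\equiv 0$ on $\mathbb{R}^n$, and the jump formula \eqref{eq.jump.V2} then gives $v=B_1U^- - B_1U^+=0$. This establishes injectivity of \eqref{eq.integral}, after which Theorem~\ref{t.single.layer} delivers both existence and uniqueness of $u_1$ in the Lipschitz range $s\in[1,\frac{3}{2})$ and, via the $C^\infty$ part of that theorem, for all $s\in\mathbb{N}$ when $\partial\Omega$ is smooth.
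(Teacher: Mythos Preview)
Your proof is correct and follows essentially the same strategy as the paper: reduce to injectivity of \eqref{eq.integral} via Theorem~\ref{t.single.layer}, kill $U={\mathcal V}^{(B)}_{\partial\Omega}(v)$ in $\Omega$ by Dirichlet uniqueness, then run an energy argument on $B_R\setminus\overline\Omega$ using the $O(|x|^{2-n})$ and $O(|x|^{1-n})$ decay from \eqref{eq.Phi.homo} to force $AU\equiv 0$ in the exterior, and finish with the jump relation \eqref{eq.jump.V2}.

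The only place you diverge from the paper is the very last step. The paper observes that, since $B_1=\sum_j A_j^*\nu_j A$ (see \eqref{eq.B.Laplace}), the conormal traces $B_1U^{\pm}$ vanish directly once $AU=0$ on each side, and the jump formula immediately gives $v=0$. You instead glue $AU=0$ across $\partial\Omega$ to get a global solution of $Au=0$ on $\mathbb{R}^n$, invoke (US) for $A$ to conclude $U\equiv 0$ everywhere, and then apply the jump formula. Your route is valid (the single layer is genuinely in $H^1_{\mathrm{loc}}(\mathbb{R}^n)$, so $AU\in L^2_{\mathrm{loc}}$ and vanishes a.e.), but it is a small detour: the paper's argument avoids the appeal to (US) altogether at this stage and reads off $v=0$ one line earlier.
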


\begin{proof} Let $A $ 
is a first order homogeneous operator with constant coefficients 
having injective principal symbol satisfying 
 \eqref{eq.Korn}. Then $A$ satisfies (US) 
condition in ${\mathbb R}^n$.

Thus, according to Theorem \ref{t.single.layer}, the remaining 
fact to establish is the injectivity of the corresponding operator 
\eqref{eq.integral}.

Let a function $v \in [H^{s-3/2} (\partial \Omega)]^k$ belongs to the kernel of the operator 
\eqref{eq.integral}. Again, the Uniqueness of the Dirichlet problem for $A^*A$ in 
$\Omega$ means that ${\mathcal V}^{(B)}_{\partial \Omega} (v)$ equals identically to 
zero in $\Omega$. However, for $s\geq 1$, using integration 
by parts,  we easily obtain 
$$
0=\langle A^*A ({\mathcal V}^{(B)}_{\partial \Omega} (v)) , {\mathcal V}^{(B)}_{\partial \Omega} (v) \rangle_{{\mathbb R}^n \setminus \overline \Omega} = 
$$
$$
\lim_{R\to + \infty}\Big( \|A {\mathcal V}^{(B)}_{\partial \Omega} (v) \|^2_{L^2 (B(0,R) 
\setminus \overline \Omega)} - 
\int_{|x|=R} 
\sum_{j=1}^n ({\mathcal V}^{(B)}_{\partial \Omega} (v) (x))^* \frac{x_j}{|x|}A^*_j
A{\mathcal V}^{(B)}_{\partial \Omega} (v) (x) d\sigma(x) \Big). 
$$
Clearly, as $\Omega$ is bounded, then using the particular type 
\eqref{eq.Phi.homo} of the kernel $\Phi (x-y)$, we see that 
there is a number $R_1>0$ such that 
$$
|{\mathcal V}^{(B)}_{\partial \Omega} (v) (x)|\leq 
\|v\|_{[H^{s-3/2} (\partial \Omega)]^k} |x|^{2-n}
\max_{p,q} \|a_{p,q}\Big(\frac{x-y}{|x-y|}\Big) \left|\frac{x-y}{|x|} \right|^{2-n} \|_{[H^{3/2-s}_y (\partial \Omega)]^k}
\leq  
$$
\begin{equation}\label{eq.V.infty}
C_\Omega ^{(1)} \|v\|_{[H^{s-3/2} (\partial \Omega)]^k} |x|^{2-n}
\end{equation}
with a positive constant $C^{(1)}_\Omega$ independent on $x$ with $|x|\geq R_1$ and $v$.

On the other hand, 
$$
\partial_{j} \Phi (x-y) = (\partial_j a) \Big(\frac{x-y}{|x-y|}\Big) 
\frac{\partial}{\partial y_j} \Big(\frac{x-y}{|x-y|}\Big) |x-y|^{2-n} + 
a \Big(\frac{x-y}{|x-y|}\Big) \partial_j |x-y|^{2-n},
$$
$$
|\partial_{j} \Phi (x-y) | \leq C  |x|^{1-n} \left| \frac{x-y}{|x|} \right|^{1-n}
$$
with a constant $C$ independent on $x$ and $y$. 
Then, using the homogeneity of the operator $A$, we see that 
there is a number $R_2>0$ such that 
\begin{equation}\label{eq.AV.infty}
|A{\mathcal V}^{(B)}_{\partial \Omega} (v) (x)|\leq 
C^{(2)}_\Omega  \|v\|_{[H^{s-3/2} (\partial \Omega)]^k} |x|^{1-n}
\end{equation}
with a positive constant $C^{(2)}_\Omega$ independent on $x$ 
 with $|x|\geq R_2$ and $v$.

Now estimates \eqref{eq.V.infty}, \eqref{eq.AV.infty} imply for 
$$
\left|
\int_{|x|=R} 
\sum_{j=1}^n ({\mathcal V}^{(B)}_{\partial \Omega} (v) (x))^* \frac{x_j}{|x|}A^*_j
A{\mathcal V}^{(B)}_{\partial \Omega} (v) (x) d\sigma(x)
\right| \leq R^{2-n}
$$
for $R\geq \max (R_1,R_2) $ and then
$$
0 =\lim_{R\to + \infty} \|A {\mathcal V}^{(B)}_{\partial \Omega} (v) \|^2_{L^2 (B(0,R) 
\setminus \overline \Omega)}
= \|A {\mathcal V}^{(B)}_{\partial \Omega} (v) \|^2_{L^2 ({\mathbb R}^n 
\setminus \overline \Omega)},
$$
i.e. $A {\mathcal V}^{(B)}_{\partial \Omega} (v) =0$ in ${\mathbb R}^n 
\setminus \overline \Omega$. 

Then jump formula \eqref{eq.jump.V2} with the boundary operators 
\eqref{eq.B.Laplace} yields
$$
v= B_1 ({\mathcal V}^{(B)}_{\partial \Omega} (v))^- - 
B_1 ({\mathcal V}^{(B)}_{\partial \Omega} (v))^+ =0 \mbox{ on } \partial \Omega,
$$
i.e. operator \eqref{eq.integral} is injective, that was to be proved. 
\end{proof}

\begin{example} Let $n\geq 2$. The typical example of  
a homogeneous first order  operator $A$ with constant coefficients in ${\mathbb R}^n$ 
having injective principal symbol is the gradient operator $A=\nabla$.
Of course, its Laplacian $A^*A = -\Delta$ satisfies  \eqref{eq.Korn.Laplace}.
Its fundamental solution of type \eqref{eq.Phi.homo.0} is given by 
$$
\Phi(x) = \varphi_n (x) = \left\{ \begin{array}{ll}
\frac{|x|^{2-n}}{(n-2)\sigma_n}, & n\geq 3,\\
\frac{1}{2\pi}\ln{|x|}, & n=2, \\
\end{array}
\right.
$$
where $\sigma_n$ is the square of the unit sphere in ${\mathbb R}^n$. 
Thus, we see that $\varphi_2$ does not have a sufficient decay at the infinity in order 
to prove Corollary \ref{c.single.layer.const} for $n=2$ by the 
present arguments (estimate \eqref{eq.V.infty} is 
not fulfilled!). However, for $n=2$ injectivity of the corresponding potential 
with logarithmic kernel  may be established by more sophisticated methods under specific 
assumptions on the geometry of the curve $\partial \Omega$, 
see, for instance, \cite{HsW77}, \cite{YS88}. 
\end{example}

\begin{example} Let $n\geq 2$. The typical example of  
a homogeneous second order strongly elliptic 
matrix operator with constant coefficients in ${\mathbb R}^n$ is the 
Lam\'e $(n\times n)$-system from the elasticity theory 
${\mathcal L} = -\mu\Delta - (\mu + \lambda)\mathop{\nabla}\operatorname{div}$ 
with the Lam\'e constants $\mu$ and $\lambda$ such that $\mu>0$, $2\mu+\lambda>0$. It  
is known to satisfy \eqref{eq.Korn}, see the 
original paper by Korn \cite{Korn08} or classical book \cite{Fi72}. 
Actually, it can be factorized as ${\mathcal L}={\mathcal A}^* {\mathcal A}  $ 
a first order homogeneous $(k\times n)$-matrix  operator 
${\mathcal A} = 
\sum_{j=1}^m {\mathcal A}_j \partial _j   $. Of course, there are many 
such operators ${\mathcal A} $. To introduce three of them we denote by 
 $M_1 \otimes M_2$ the Kronecker product of matrices $M_1$ and $M_2$, 
 by $\mbox{rot}_m$  we denote  $\Big(\frac{(m^2-m)}{2}\times 
m\Big)$-matrix operator with the lines   $\vec{e}_i \frac{\partial}{\partial x_j}-\vec{e}_j 
\frac{\partial}{\partial x_i }$,  $1\leq i<j \leq m$,  representing the vorticity (or the 
standard rotation operator for $m=2$, $m=3$), and by  ${\mathbb D}_m$ we denote  
$\Big(\frac{(m^2+m)}{2}\times m\Big)$-matrix operator with the lines  $\sqrt{2} \vec{e}_i 
\frac{\partial}{\partial x_i}$, $1\leq i \leq m$, and $\vec{e}_i \frac{\partial}{\partial 
x_j}+\vec{e}_j \frac{\partial}{\partial x_i }$  with $1\leq i<j \leq m$,  
representing the deformation (the strain). The we set: 
\begin{equation} \label{eq.factor.1}
{\mathcal A}^{(1)} = 
\left(\begin{array}{lll} \sqrt{\mu } \ {\mathbb D}_m \\ 
\sqrt{\lambda } \mbox{div}_m  \\ \end{array} \right), \, 
{\mathcal A}^{(2)}  = \left(\begin{array}{lll} \sqrt{\mu } \ \nabla_m \otimes I_m \\  \sqrt{\mu 
+\lambda } \ \mbox{div}_m , \end{array} \right), \, {\mathcal A}^{(3)} = 
\left(\begin{array}{lll} \sqrt{\mu } \ \mbox{rot}_m \\ \sqrt{2\mu +\lambda } \mbox{div}_m  \\
\end{array} \right), 
\end{equation}  
here $\lambda \geq 0$,  $k_1=(m^2+m)/2 + 1$ for the first operator,   $(\mu+\lambda)\geq 0$, 
$k_2=m^2+1$  for the second operator, and $(2\mu+\lambda)>0$, 
$k_3=(m^2-m)/2 + 1$  for the third operator.  
Factorized as $({\mathcal A}^{(1)})^* {\mathcal A}^{(1)}$ or 
$({\mathcal A}^{(2)})^* {\mathcal A}^{(2)}$, 
it satisfies \eqref{eq.Korn.Laplace}; see, for instance, \cite[Examples 3 and 5]{PeSh15} or \cite{Fi72}.
Factorization $({\mathcal A}^{(3)})^* {\mathcal A}^{(3)}$ does not admit 
\eqref{eq.Korn.Laplace} because the Neumann problem \ref{pr.Neu}
is not coercive in this case, \cite[Example 4]{PeSh15}.
 
Its fundamental solution of type \eqref{eq.Phi.homo.0} is given by 
$
\Phi_n (x) = \left( \Phi^{(n)}_{ij} (x) \right)_{i,j=1,2,...,n}
$ 
 with
components
$$\Phi^{(n)}_{ij} (x) =
   \frac{1}{2 \mu (\lambda + 2 \mu)}
   \left( \delta_{ij}\, (\lambda + 3 \mu )  g(x) -
          (\lambda + \mu)\, x_{j}\, \frac{\partial}{\partial x_{i}} \varphi_n (x)
   \right)
   \, \, \, (i,j = 1,2,...,n), $$
where $\delta_{ij}$ is the Kronecker delta and $\varphi_n (x)$ is the standard fundamental 
solution to the Laplace operator  in ${\mathbb R}^n$. 
Thus, we see that $\Phi_2$ does not have a sufficient decay at the infinity in order 
to prove Corollary \ref{c.single.layer.const} for $n=2$ by the 
present arguments (estimate \eqref{eq.V.infty} is 
not fulfilled!). 
\end{example}

\begin{example} Let $a$ be  a complex non-zero number and 
$$A = \left( 
\begin{array}{ll}
\nabla\\
a
\end{array}\right) ;
$$
this operator is not homogeneous. 
 Then $A^*A = |a|^2- \Delta$ is the Helmholtz operator  in ${\mathbb R}^3 $ 
admitting the fundamental solutions
$$
\Phi_- (x) = \frac{e^{-|a|\,|x|}}{4\pi |x|}, \, \Phi_+(x) = \frac{e^{|a|\,|x|}}{4\pi |x|}
$$
  and \eqref{eq.Korn.Laplace} is obviously fulfilled. Clearly, 
we may repeat the arguments from the proof of Corollary \ref{c.single.layer.const}
and prove that the representation 
via single layer potential is true for solutions to the Helmholtz equation 
if we choose  $\Phi_- (x)$ for the potential. 
However estimates \eqref{eq.V.infty}, \eqref{eq.AV.infty} are not  true for $\Phi_+(x) $ 
and the arguments fail for the corresponding potential. 

Note that the Helmholtz operator  in ${\mathbb R}^3 $  may also have the form 
$(-|a|^2-\Delta)$. However, in this case its standard 
fundamental solutions are 
$$
\Phi_- (x) = \frac{e^{-\iota \, |a|\,|x|}}{4\pi |x|}, \, \Phi_+(x) = 
\frac{e^{\iota |a|\,|x|}}{4\pi |x|};
$$
again estimates \eqref{eq.V.infty}, \eqref{eq.AV.infty} are not  true for 
the potentials corresponding  to both $\Phi_- (x)$ and $\Phi_+(x) $. Hence 
the arguments in the proof of Corollary \ref{c.single.layer.const} fail. 
\end{example}


 Finally, let us show how the theorems  on the 
representation by the single layer potential and approximation theorems 
for solutions to elliptic systems help to clarify the question 
on the so-called \textit{discrete approximation}.

\begin{corollary} \label{c.discrete}
Let $s,s'\in \mathbb N$ and let 
$\Omega$ be a relatively compact domain in $X$ with $C^\infty$-smooth 
boundary. Let also $L$ be a second order strongly elliptic operator satisfying 
\eqref{eq.Dir.Hadamard}, \eqref{eq.Korn}, \eqref{eq.Dir.Hadamard.Omega} and 
the injectiviy condition for operator \eqref{eq.integral}. If $\Omega'$ is 
a relatively compact domain  in $\Omega$ with Lipschitz boundary 
such that set $\Omega\setminus \Omega'$ has no compact components then for any 
the set of all single layer potentials of type \eqref{eq.single} with densities 
from the space  $[H^{s-3/2} (\partial \Omega)]^k$ are dense in the space 
$[H^{s'} (\Omega')]^k \cap S_{L} (\Omega')$.
\end{corollary}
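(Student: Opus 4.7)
The plan is to combine the single layer representation from Theorem~\ref{t.single.layer} with a Runge-type approximation theorem for solutions of elliptic systems, along the same lines as the argument used in the proofs of Theorem~\ref{t.Cauchy.M} and Corollary~\ref{c.ext.1}. The hypothesis that $\partial\Omega$ is of class $C^\infty$ and that the operator \eqref{eq.integral} is injective means that the $C^\infty$-boundary part of Theorem~\ref{t.single.layer} is applicable for any $s \in \mathbb N$, so the set $\{{\mathcal V}^{(B)}_{\partial\Omega}(u_1) : u_1 \in [H^{s-3/2}(\partial\Omega)]^k\}$ is precisely $[H^s(\Omega)]^k \cap S_L(\Omega)$, and restricting these solutions to $\Omega'$ produces elements of $[H^{s'}(\Omega')]^k \cap S_L(\Omega')$ by interior regularity since $\overline{\Omega'}\subset \Omega$.

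Fix now $u \in [H^{s'}(\Omega')]^k \cap S_L(\Omega')$; I would approximate it in $[H^{s'}(\Omega')]^k$ by elements of $S_L(\overline{\Omega})$ in two steps. First, since $\Omega'$ has Lipschitz boundary and hence the strong cone property, the internal approximation theorem (\cite[Corollary 8.4.2]{Tark36} together with \cite[Theorems 8.1.2, 8.1.3, 8.2.2]{Tark36}) yields a sequence from $S_L(\overline{\Omega'})$ converging to $u$ in the $[H^{s'}(\Omega')]^k$-norm. Second, the geometric hypothesis that $\Omega\setminus\Omega'$ has no compact components in $\Omega$ puts me in a position to invoke the Runge-type theorem \cite[Theorems 5.1.11, 5.1.13]{Tark36} in order to approximate each element of $S_L(\overline{\Omega'})$ by elements of $S_L(\overline{\Omega})$. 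A diagonal extraction then provides a sequence $\{u_n\} \subset S_L(\overline{\Omega})$ with $u_n \to u$ in $[H^{s'}(\Omega')]^k$. Each $u_n$, being smooth on a neighbourhood of $\overline{\Omega}$, belongs to $[H^s(\Omega)]^k \cap S_L(\Omega)$, so Theorem~\ref{t.single.layer} supplies a unique density $u_{1,n} \in [H^{s-3/2}(\partial\Omega)]^k$ with $u_n = {\mathcal V}^{(B)}_{\partial\Omega}(u_{1,n})$ in $\Omega$, which gives the required approximation after restriction to $\Omega'$.

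The delicate point I expect to need careful handling is confirming that the Runge step can be carried out in a topology strong enough to combine with the internal approximation step and produce convergence in $[H^{s'}(\Omega')]^k$. Should the form of Runge's theorem from \cite{Tark36} deliver only $C^\infty_{\mathrm{loc}}$-density of $S_L(\overline{\Omega})$ inside $S_L(\overline{\Omega'})$, I would upgrade this to $[H^{s'}(\Omega')]^k$-density by an elementary cut-and-smooth step: first approximate on a slightly enlarged intermediate domain $\Omega'' \Supset \overline{\Omega'}$ (arranged so that $\Omega \setminus \Omega''$ still has no compact components in $\Omega$), and then exploit the continuity of the restriction $[H^{s'}(\Omega'')]^k \to [H^{s'}(\Omega')]^k$ together with the interior Sobolev bounds for solutions of $L$. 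No analytical input beyond what already appears in \cite{Tark36} and in the earlier sections of this paper should be needed.
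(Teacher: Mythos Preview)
Your proposal is correct and follows essentially the same approach as the paper: the paper's proof consists of two sentences, invoking the approximation theorems \cite[Theorems 5.1.11, 5.1.13, 8.2.2]{Tark36} to show that $[H^{s}(\Omega)]^k \cap S_L(\Omega)$ is dense in $[H^{s'}(\Omega')]^k \cap S_L(\Omega')$, and then applying Theorem~\ref{t.single.layer}. Your two-step argument (internal approximation by $S_L(\overline{\Omega'})$, then Runge approximation by $S_L(\overline{\Omega})$, with the intermediate-domain trick to upgrade the topology) is exactly the unpacking of what those citations contain, so you have simply supplied the details the paper leaves implicit.
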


\begin{proof} 
Approximation Theorems for solutions to elliptic systems,
 see, for instance, \cite[Theorems 5.1.11, 5.1.13, 8.2.2]{Tark36} imply 
that the space $[H^{s} (\Omega)]^k \cap S_{L} (\Omega)$ is dense in 
$[H^{s'} (\Omega')]^k \cap S_{L} (\Omega')$. Hence the statement follows from 
Theorem \ref{t.single.layer}.
\end{proof}

The following corollary is just a specification of similar statements 
for various solutions to elliptic systems, see the pioneer  result 
\cite{R1885} by C. Runge for holomorphic functions where the Cauchy kernel was used,  
or \cite[Theorem 4.2.1]{Tark97} for differential operators admitting left fundamental 
solutions.

\begin{corollary} \label{c.discrete.2}
Let $s'\in \mathbb N$ and let 
$\Omega$ be a relatively compact domain in $X$ with $C^\infty$-smooth 
boundary. Let also $L$ be a second order strongly elliptic operator satisfying 
\eqref{eq.Dir.Hadamard}, \eqref{eq.Korn}, \eqref{eq.Dir.Hadamard.Omega} and 
the injectiviy condition for operator \eqref{eq.integral}. If $\{z_j\}_{j\in \mathbb N}$ is an everywhere dense 
set on $\partial \Omega$ and $\Omega'$ is 
a relatively compact domain  in $\Omega$ with Lipschitz boundary 
such that set $\Omega\setminus \Omega'$ has no compact components then for any 
$u \in [H^{s'} (\Omega')]^k \cap S_{L} (\Omega')$ and any $\varepsilon >0$
there are numbers $M (u,\varepsilon) \in \mathbb N$ and 
$k$-vectors $\{c_j (u,\varepsilon)\}_{j=1}^{M (u,\varepsilon)}$ 
such that
$$
\left\|u (x) - \sum_{j=1}^{M (u,\varepsilon)}   \Phi (x,z_j) 
c_j (u,\varepsilon) \right\|_{[H^{s'} (\Omega')]^k }<\varepsilon.
$$
\end{corollary}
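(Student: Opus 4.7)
The plan is to combine the density of single layer potentials (Corollary~\ref{c.discrete}) with a Hahn--Banach argument that exploits the density of $\{z_j\}$ on $\partial \Omega$ and the smoothness of $\Phi(x,y)$ for $(x,y) \in \overline{\Omega'} \times \partial \Omega$.

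First, I would observe that since $\Omega' \Subset \Omega$, for every $z \in \partial \Omega$ the kernel $\Phi(\cdot, z)$ is $C^\infty$-smooth on $\overline{\Omega'}$ and belongs to $S_L(\Omega')$, so the linear span $\mathcal{L}$ of $\{\Phi(\cdot, z_j)\, c : j \in \mathbb N,\ c \in \mathbb C^k\}$ sits inside $[H^{s'}(\Omega')]^k \cap S_L(\Omega')$. Next, I would argue by contradiction: suppose the closure of $\mathcal L$ in $[H^{s'}(\Omega')]^k$ fails to contain some element of $[H^{s'}(\Omega')]^k \cap S_L(\Omega')$. Then Hahn--Banach provides a continuous linear functional $\lambda$ on $[H^{s'}(\Omega')]^k$ vanishing on $\mathcal L$ but non-trivial on $[H^{s'}(\Omega')]^k \cap S_L(\Omega')$.

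The key step is to introduce $F : \partial \Omega \to \mathbb C^k$ componentwise by $F_l(z) := \lambda(\Phi(\cdot, z)\, e_l)$, $l=1,\dots,k$, where $e_l$ denotes the $l$-th standard basis vector. Because $\mathrm{dist}(\overline{\Omega'}, \partial \Omega) > 0$, the map $z \mapsto \Phi(\cdot, z)$ is $C^\infty$ from $\partial \Omega$ into $[H^{s'}(\Omega')]^{k \times k}$, so $F$ is continuous on the compact set $\partial \Omega$. Since $F(z_j) = 0$ for every $j$ by construction, the density of $\{z_j\}$ forces $F \equiv 0$ on $\partial \Omega$. Then for any smooth density $u_1 \in [C^\infty(\partial \Omega)]^k$ the continuous $[H^{s'}(\Omega')]^k$-valued integrand $y \mapsto \Phi(\cdot, y)\, u_1(y)$ allows one to pass $\lambda$ under the integral sign to obtain
$$
\lambda\bigl({\mathcal V}^{(B)}_{\partial \Omega}(u_1)\bigr) = \int_{\partial \Omega} F(y)^T u_1(y)\, d\sigma(y) = 0.
$$
I would then extend this identity to arbitrary $u_1 \in [H^{s-3/2}(\partial \Omega)]^k$ by density of $[C^\infty(\partial \Omega)]^k$ in $[H^{s-3/2}(\partial \Omega)]^k$ together with the continuity of ${\mathcal V}^{(B)}_{\partial \Omega} : [H^{s-3/2}(\partial \Omega)]^k \to [H^{s'}(\Omega')]^k$ (which holds for every $s' \in \mathbb N$ because $\Omega' \Subset \Omega$). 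Hence $\lambda$ annihilates every single layer potential, and by Corollary~\ref{c.discrete} this forces $\lambda$ to vanish on all of $[H^{s'}(\Omega')]^k \cap S_L(\Omega')$, the desired contradiction.

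The only (minor) technical obstacle I foresee is justifying the Pettis/Bochner-type interchange of $\lambda$ with the surface integral: for smooth $u_1$ this is immediate from continuity of the integrand in the $[H^{s'}(\Omega')]^k$ topology, so the only care needed is in the passage to distributional $u_1 \in [H^{s-3/2}(\partial \Omega)]^k$, which is handled by density and the continuity of the single layer operator recorded above.
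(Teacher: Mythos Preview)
Your argument is correct and complete; the Hahn--Banach reduction, the continuity of $z\mapsto\Phi(\cdot,z)$ into $[H^{s'}(\Omega')]^{k\times k}$, the passage of $\lambda$ through the Bochner integral for continuous densities, and the final appeal to Corollary~\ref{c.discrete} all go through as you describe.

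However, your route is genuinely different from the paper's. The paper gives a \emph{constructive} proof: it fixes an intermediate domain $\Omega'\Subset\Omega''\Subset\Omega$, uses the interior a~priori estimate $\|v\|_{[H^{s'}(\Omega')]^k}\le c\,\|v\|_{[C(\overline{\Omega''})]^k}$ for $v\in S_L(\Omega'')$, invokes Corollary~\ref{c.discrete} with $s$ large enough that the resulting density $v_\varepsilon$ is continuous on $\partial\Omega$ (Sobolev embedding), and then discretizes the Riemann integral ${\mathcal V}^{(B)}_{\partial\Omega}(v_\varepsilon)$ by partitioning $\partial\Omega$ into small measurable pieces, applying uniform continuity of $\Phi(x,y)v_\varepsilon(y)$ on the compact set $\overline{\Omega''}\times\partial\Omega$ and the integral mean value theorem, and finally selecting one point $z_{i(\varepsilon)}$ of the dense set in each piece. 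This produces explicit coefficients $c_j(u,\varepsilon)=\sigma(P_i(\varepsilon))\,v_\varepsilon(z_{i(\varepsilon)})$ and an explicit $M(u,\varepsilon)$, and the paper remarks afterwards that one can refine the partition while retaining the previously chosen nodes. By contrast, your duality argument is shorter and conceptually cleaner but non-constructive: it establishes density without exhibiting the approximating coefficients, which is a loss in a paper whose ultimate goal is to justify numerical schemes (MFS) where one actually wants a recipe for the $c_j$.
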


\begin{proof} Fix a positive number $\varepsilon$.
First, take domain $\Omega''$ such that $\Omega' \Subset \Omega'' \Subset \Omega$. 
Then, by the a priori estimates for elliptic systems, see, for instance, 
\cite{GiTru83}, we know that 
there is a number $c(s',\partial \Omega', \partial \Omega'')>0$ dependent on the distance between $\partial \Omega'$ and 
$\partial \Omega''$  such that 
\begin{equation} \label{eq.apriori}
\|u\|_{H^{s'} (\Omega')} \leq c(s',
\partial \Omega',\partial \Omega'') \|u\|_{[C(\overline{\Omega''})]^k}.
\end{equation}
for all $u \in S_L (\Omega'') \cap [C(\overline{\Omega''}]^k$.

Second, using Corollary \ref{c.discrete} for $s>\frac{n+2}{2}$, we pick a density
a density $v_\varepsilon \in [H^{s-3/2} (\partial \Omega)]^k$ 
such that
\begin{equation} \label{eq.discr.appr.1}
\left\|u  -  {\mathcal V}^{(B)}_{\partial \Omega} (v_\varepsilon)
\right\|_{[H^{s'} (\Omega')]^k }<\varepsilon/2.
\end{equation}
By the Sobolev embedding theorems, see, for instance, 
\cite[Ch.~4, Theorem~4.12]{Ad03}), the density $v_\varepsilon$ is actually continuous 
on $\partial \Omega$. In particular, 
the potential ${\mathcal V}^{(B)}_{\partial \Omega} (v_\varepsilon)$ is 
a Riemann integral depending on the parameter $x \in \Omega''$ as $\Omega''\Subset \Omega$.

Note that the  set $K=  \overline {\Omega''} \times \partial \Omega$ is a compact in 
${\mathbb R}^n \times {\mathbb R}^n$. By Cantor's theorem, any continuous 
function on $K$ is uniformly continuous and hence for any  $w \in C(K)$  there is 
a number $\delta_\varepsilon$ such that 
\begin{equation} \label{eq.Cantor}
|w(x',y') - w(x'',y''| <\frac{\varepsilon }{2 c(s',\partial \Omega',\partial \Omega'')  \sigma (\partial \Omega)} 
\end{equation}
if $x',x'' \in \overline \Omega''$, $y', y'' \in \partial \Omega$
and $|x'-x''|^2 +  |y'-y''|^2$, where $\sigma (\partial \Omega)$ 
is the square ($(n-1)$-Jordan measure) of the hypersurface $\partial \Omega$.

As the function $\Phi^* (x, y) v_\varepsilon (y)$ 
is continuous on $K$, it is Riemann integrable
over $\partial \Omega$. Then for any partition $P = \{ P_i\}_{i=1}^N$ of 
$\partial \Omega$ by measurable sets $P_i \subset \partial \Omega$ we have  
for all $x \in \Omega''$:
$$
 {\mathcal V}^{(B)}_{\partial \Omega} (v_\varepsilon) (x) 
= \sum_{i=1}^N  {\mathcal V}^{(B)}_{P_i} (v_\varepsilon) (x) 
$$
Then, by the integral mean value theorem, 
there are points $y_{i,x} \in P_i$ such that 
\begin{equation} \label{eq.mean}
 {\mathcal V}^{(B)}_{\partial \Omega} (v_\varepsilon) (x) 
= \sum_{i=1}^N  \Phi (x, y_{i,x})  v_\varepsilon (y_{i,x})  
\end{equation}
Next, we choose $N=N(\varepsilon)$ and $\{ P_i = P_i (\varepsilon)\}_{i=1}^N $ 
to be relatively open sets on $\partial \Omega$ with 
piece-wise smooth boundaries and such that the diameter of each set 
is less than the number $\delta_\varepsilon$ related to \eqref{eq.Cantor}. 
Since the set of points $\{z_j\}$ is every where dense in $\partial \Omega$ 
we conclude that each $P_i$ contains at least one point $ z_{i(\varepsilon}) \in \{z_j\} $. 
Hence \eqref{eq.Cantor} yields for each $x \in \overline \Omega''$:
\begin{equation} \label{eq.discr.appr.2}
\left|\sum_{i=1}^{N(\varepsilon)} 
\Big( \Phi (x, y_{i,x}) v_\varepsilon (y_{i,x}) - 
 \Phi (x,  z_{i(\varepsilon}))  v_\varepsilon ( z_{i(\varepsilon}))\Big)
\sigma (P_i(\varepsilon)) 
 \right|
\leq 
\end{equation}
\begin{equation*}
\sum_{i=1}^{N(\varepsilon)}   
\frac{\varepsilon \sigma (P_i (\varepsilon))}{2 c(s',\partial \Omega',\partial \Omega'') \sigma (\partial \Omega)}
= \frac{\varepsilon}{2 c(s',\partial \Omega',\partial \Omega'')}.
\end{equation*}
Finally, combining 
\eqref{eq.apriori}, \eqref{eq.discr.appr.1},   \eqref{eq.mean}, \eqref{eq.discr.appr.2} 
we conclude that the statement of the corollary holds true with 
$M (\varepsilon) = \max_{1\leq i \leq N(\varepsilon)} i(\varepsilon)$ and 
the vectors 
$$
c_j (u,\varepsilon) = 
\left\{
\begin{array}{lll}
\sigma (P_i(\varepsilon)) v_\varepsilon ( z_{i(\varepsilon})) & 
\rm{ if } & j= i(\varepsilon), \\
0 & \rm{ if } & j\ne  i(\varepsilon).
\end{array}
\right.
$$
This finishes the proof.
\end{proof}
Note that the described scheme   allows 
us to use the set of points $\{ z_j \}_{j=1}^{M(\varepsilon )}$ 
after refining the partition $\{P_i \}_{j=1}^{N(\varepsilon )}$ 
for a new positive number $\varepsilon'<\varepsilon$, adding new points 
from the set $\{ z_j \}_{j=1}^ \infty$ related to the 
new elements of the  refined  partition that do not contain 
the elements of the set  $\{ z_j \}_{j=1}^{M(\varepsilon )}$. 

\section{Some methods for solving the exterior extension problems for strongly elliptic 
operators of the second order}
\label{S.4}

In this section we consider some methods for constructing solutions to Problems 
\ref{eq:problem1}, \ref{eq:problem2}, \ref{eq:problem3} (or, more precisely, 
Problems \ref{pr.Cauchy.M}, \ref{pr.ext.1}, \ref{pr.ext.2}, respectively) which are based 
on the use of fundamental solutions of the corresponding elliptic equations.  More 
specifically, we will focus on the so-called indirect method of boundary integral equations 
in terms of the single layer and on the method of fundamental solutions.  We also considered 
the “extension approach” for approximation of the solution to the Dirichlet Problem 
\ref{eq:problem4} (more precisely, Problem \ref{pr.Dir} for $m=1$).  

In this section we assume that $m=1$,  $L$ is a second order strongly elliptic operator with 
smooth coefficients such that $L$ and $L^{*}$ satisfy (US) 
property in $X$ and requirements 
\eqref{eq.Korn}, \eqref{eq.Dir.Hadamard.Omega} and the injectivity condition for the single 
layer operator \eqref{eq.integral}  in the respective domains. 
We assume also that  the hypotheses of Corollary \ref{c.Dirichlet.Poisson} 
and Corollary \ref{c.well.ext.1} or Corollary \ref{c.well.ext.1.Lip} are fulfilled.

As in the previous sections, we assume that  $\Omega_0$ is a relatively 
compact domain in $X$ and $\Omega_1$ is a bounded domain in $X$ such that $\bar{\Omega}_0 
\subset \Omega_1$ and $\Omega_1 \setminus \Omega_0$ has no compact components; solutions to 
the boundary value problems belong to $[H^{s}(\Omega_1)]^k \bigcap S_L (\Omega_1)$. 
In this section we will use the following requirements to the domain boundaries:

1) the boundaries $\partial \Omega_0$ and $\partial \Omega_1$ belongs to $C^{s}$-class of smoothness  if $s\in \mathbb N$, $s\geq 2$ 
or 2) both boundaries $\partial \Omega_0$ and $\partial \Omega_1$ 
are Lipschitz ones if  $s \in [1, 3/2)$. 

We denote the restrictions of $B_0u$ and  $B_1u$ onto $\partial \Omega_i$ as 
 $B_{0,\partial \Omega_i}$ and  $B_{1,\partial \Omega_1}$ resepectively. We also denote  $B_{0,\partial \Omega_i}u$ as $u_{0i}$
and  $B_{1,\partial \Omega_i}u$ as  $u_{1i}$.

We begin our consideration with the direct boundary integral equations method for solving 
Problem \ref{pr.ext.2}.

The \textbf{indirect boundary integral equation method} in terms of the single layer consists 
in computing the solution to Problem \ref{pr.ext.2}  
as a potential of the single layer given on $\partial \Omega_1$:
\begin{equation}
\label{eq.4.8}
u(x) =  {\mathcal V} _{\partial \Omega_1} v (z), x\in \Omega_1, z\in \partial\Omega_1, 
\end{equation} 
where $v$ is a density of the single layer.  The unknown single layer density $v$ can be found as a solution to the operator equation: 

\begin{equation}
\label{eq.4.10}
{\mathcal V}_{\partial \Omega_1} v(z) = f(y),  z\in \partial\Omega_1,  y\in \partial\Omega_0,
\end{equation} 
where $f=B_0u$ on $\partial\Omega_0$ is the boundary datum of Problem \ref{pr.ext.2}.


\begin{corollary} \label{t.4.3} 
If a solution $u \in [H^{s}(\Omega_1)]^k \bigcap S_L(\Omega_1)$  to Problem \ref{pr.ext.2} 
exists, then the solution $v \in [H^{s-3/2} (\partial \Omega_1)]^k$  to 
operator equation \eqref{eq.4.10} exists and it is unique. Equation \eqref{eq.4.10} is densely 
solvable. Any solution $u \in [H^{s}(\partial \Omega_1)]^k  \bigcap S_L(\Omega_1)$  to 
Problem \ref{pr.ext.2}  
can be presented in form \eqref{eq.4.8} where 
$v \in [H^{s-3/2} (\partial \Omega_1)]^k$ is the solution to equation 
\eqref{eq.4.10}. 
\end{corollary}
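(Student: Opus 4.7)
The plan is to derive the corollary almost directly from Theorem \ref{t.single.layer} applied to the outer domain $\Omega_1$, combined with the uniqueness and dense solvability of Problem \ref{pr.ext.2} already provided by Corollary \ref{c.ext.2} (or Corollary \ref{c.ext.2.Lip} in the Lipschitz case). The standing hypotheses of Section \ref{S.4}, in particular the injectivity of \eqref{eq.integral} on $\partial \Omega_1$ and the regularity assumption on $\partial \Omega_1$, are precisely those needed to apply Theorem \ref{t.single.layer} on $\Omega_1$.

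First I would establish existence of $v$ and the representation \eqref{eq.4.8}. Given a solution $u \in [H^{s}(\Omega_1)]^k \cap S_L(\Omega_1)$ of Problem \ref{pr.ext.2}, Theorem \ref{t.single.layer} applied to $\Omega_1$ yields a unique density $v \in [H^{s-3/2}(\partial \Omega_1)]^k$ with $u = \mathcal{V}^{(B)}_{\partial \Omega_1} v$ in $\Omega_1$. Since $\overline{\Omega}_0 \subset \Omega_1$, the kernel $\Phi(x,y)$ is smooth on $\partial \Omega_0 \times \partial \Omega_1$, so applying $B_{0,\partial \Omega_0}$ is unproblematic and produces \eqref{eq.4.10} with right-hand side $f = u_{00}$. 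This simultaneously proves the representation claim.

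For uniqueness of $v$, I would argue by linearity: if $v_1, v_2 \in [H^{s-3/2}(\partial \Omega_1)]^k$ both solve \eqref{eq.4.10} with the same $f$, set $w := \mathcal{V}^{(B)}_{\partial \Omega_1}(v_1 - v_2)$. By the boundedness of the potential \eqref{eq.VB} and the fact that $L \mathcal{V}^{(B)}_{\partial \Omega_1}(\cdot) = 0$ in $\Omega_1$, we have $w \in [H^{s}(\Omega_1)]^k \cap S_L(\Omega_1)$ with $B_{0,\partial \Omega_0} w = 0$, so $w$ is a solution of Problem \ref{pr.ext.2} with zero data. The uniqueness clause of Corollary \ref{c.ext.2}/\ref{c.ext.2.Lip} forces $w \equiv 0$ in $\Omega_1$, and then the uniqueness clause of Theorem \ref{t.single.layer} forces $v_1 = v_2$ on $\partial \Omega_1$.

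For dense solvability of \eqref{eq.4.10}, I would observe that the first step shows the range of the operator $v \mapsto B_{0,\partial \Omega_0} \mathcal{V}^{(B)}_{\partial \Omega_1}(v)$ contains every $u_{00}$ arising from a solvable instance of Problem \ref{pr.ext.2}. By the dense solvability portion of Corollary \ref{c.ext.2}/\ref{c.ext.2.Lip}, such boundary data are dense in $[H^{s-1/2}(\partial \Omega_0)]^k$, whence the range of \eqref{eq.4.10} is dense in the data space. The main obstacle I anticipate is purely bookkeeping: matching the smoothness index $s$ between the smooth-boundary regime ($s \in \mathbb N$, which requires the $C^{\infty}$ clause in Theorem \ref{t.single.layer}) and the Lipschitz regime ($s \in [1, 3/2)$), and verifying that the ambient assumptions listed at the start of Section \ref{S.4} indeed deliver all the prerequisites of the cited results in both regimes; no new analytic estimates need to be carried out.
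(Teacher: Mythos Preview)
Your proposal is correct and follows essentially the same route as the paper, which simply says the statements follow immediately from Corollary~\ref{c.well.ext.1.Lip} and Theorem~\ref{t.single.layer}; you have merely unpacked that one-line proof into its constituent steps. Your citation of Corollary~\ref{c.ext.2}/\ref{c.ext.2.Lip} in place of Corollary~\ref{c.well.ext.1.Lip} is in fact the more apt reference for the uniqueness and dense-solvability ingredients, since the latter concerns conditional well-posedness rather than those properties directly (though the hypotheses coincide).
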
 

\begin{proof} 
The statements follow immediately from  Corollary \ref{c.well.ext.1.Lip} 
and Theorem \ref{t.single.layer}.
\end{proof}

\begin{corollary} \label{t.4.4}
Let $\{ v^{(i)} \} \subset  \Big[ H^{s-\frac{3}{2}}  (
\partial \Omega_1)  \Big]^k$ be a bounded sequence. 

If the sequence 
$f^{(i)}$ of the boundary datum of Problem \ref{pr.ext.2}
strongly convergence to zero in 
$\big[H^{s-1/2}(\partial \Omega_0) \big]^k $ then:
\begin{itemize}
\item the sequence $\{u^{(i)}\}$ given by formula \eqref{eq.4.8} with $v=v^{(i)}$ strongly 
converges to zero in $[H^s (\Omega_0)]^k \bigcap S_L (\Omega_0)$, 
weakly converges to zero  in $[H^s (\Omega_1)]^k \cap S_L (\Omega_1)$  
 and it converges to zero 
in the local space $[H_{\text{loc}}^s (\Omega_1 )]^k$; 
\item the sequence 
$B_{0,{\partial\Omega_1}}u$
weakly converges to zero in 
$\big[H^{s-1/2} (\partial \Omega_1)\big]^k$  
 and the sequence $\{v^{(i)}\}$ is weakly converges to zero in 
$\big[H^{s-3/2} (\partial \Omega_1)\big]^k$.
\end{itemize}
\end{corollary}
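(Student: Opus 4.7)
The plan is to invoke the conditional well-posedness of Problem \ref{pr.ext.2} established in Corollary \ref{c.well.ext.1.Lip}, and then to deduce weak convergence of the densities $v^{(i)}$ from a weak-compactness argument combined with the injectivity of the single-layer operator assumed in this section.

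First, I would verify that the sequence $u^{(i)} = \mathcal{V}^{(B)}_{\partial \Omega_1}(v^{(i)})$ belongs to $[H^s_{L,\gamma}(\Omega_1)]^k$ for some uniform $\gamma > 0$: boundedness of the single-layer potential operator \eqref{eq.VB}, together with the hypothesis that $\{v^{(i)}\}$ is bounded in $[H^{s-3/2}(\partial \Omega_1)]^k$, yields a uniform bound in $[H^s(\Omega_1)]^k$, while $L u^{(i)} = 0$ in $\Omega_1$ is automatic from the definition of the potential. Since by hypothesis $B_{0,\partial \Omega_0} u^{(i)} = f^{(i)} \to 0$ in $[H^{s-1/2}(\partial \Omega_0)]^k$, statement (3) of Corollary \ref{c.well.ext.1.Lip} (specialised to $m=1$) immediately yields that $\{u^{(i)}\}$ converges to zero in the local space $[H^s_{\mathrm{loc}}(\Omega_1)]^k$.

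Next I would extract the remaining assertions about the potentials themselves. Strong convergence in $[H^s(\Omega_0)]^k \cap S_L(\Omega_0)$ follows because $\overline \Omega_0$ is a compact subset of $\Omega_1$, so local convergence is enough there. Feeding this back into statement (1) of Corollary \ref{c.well.ext.1.Lip} produces weak convergence of $\{u^{(i)}\}$ to zero in $[H^s(\Omega_1)]^k$. The weak convergence of the traces $B_{0,\partial \Omega_1} u^{(i)} \rightharpoonup 0$ in $[H^{s-1/2}(\partial \Omega_1)]^k$ then follows from the continuity of the boundary operators \eqref{eq.trace.B_j}, \eqref{eq.B1.Lip} and the fact that bounded linear operators are weakly continuous.

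The only point requiring more care is the weak convergence $v^{(i)} \rightharpoonup 0$ in $[H^{s-3/2}(\partial \Omega_1)]^k$, since the single-layer operator, while injective, does not have an a priori bounded inverse at our disposal. My plan here is a standard subsequence trick: using reflexivity of $[H^{s-3/2}(\partial \Omega_1)]^k$ and the boundedness of $\{v^{(i)}\}$, every subsequence admits a further subsequence $\{v^{(i_k)}\}$ weakly convergent to some $v^* \in [H^{s-3/2}(\partial \Omega_1)]^k$. Weak continuity of the bounded operator \eqref{eq.VB} gives $u^{(i_k)} \rightharpoonup \mathcal{V}^{(B)}_{\partial \Omega_1}(v^*)$ in $[H^s(\Omega_1)]^k$; combined with the weak limit $u^{(i_k)} \rightharpoonup 0$ already established, this forces $\mathcal{V}^{(B)}_{\partial \Omega_1}(v^*) = 0$. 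Injectivity of the single-layer operator (hypothesis of this section, together with Theorem \ref{t.single.layer}) then gives $v^* = 0$, and since the weak limit is the same along every weakly convergent subsequence, the entire sequence $\{v^{(i)}\}$ converges weakly to zero. This last step is the main obstacle, because one cannot bootstrap weak convergence of the densities directly from weak convergence of their potentials — it is the interplay of reflexivity, weak continuity of the single-layer operator, and its injectivity that closes the argument.
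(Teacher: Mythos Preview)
Your argument is correct, and the overall structure---bounding $u^{(i)}$ via \eqref{eq.VB}, invoking Corollary~\ref{c.well.ext.1.Lip} for the conditional well-posedness, and then passing to traces---matches the paper's proof. The only substantive divergence is in the final step, the weak convergence of the densities $v^{(i)}$.

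You write that the single-layer operator ``does not have an a~priori bounded inverse at our disposal,'' and therefore run a reflexivity/subsequence argument using only injectivity. In fact the paper does have a bounded inverse available: the proof of Theorem~\ref{t.single.layer} shows (via the Fredholm alternative for the strongly elliptic boundary operator $B_0\mathcal{V}^{(B)}_{\partial\Omega_1}$) that under the standing injectivity assumption of this section the map
\[
B_0\mathcal{V}^{(B)}_{\partial\Omega_1}:\ [H^{s-3/2}(\partial\Omega_1)]^k\ \longrightarrow\ [H^{s-1/2}(\partial\Omega_1)]^k
\]
is continuously invertible. The paper therefore simply applies this bounded inverse to the already-established weak convergence $B_{0,\partial\Omega_1}u^{(i)}\rightharpoonup 0$ and concludes $v^{(i)}\rightharpoonup 0$ in one line. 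Your subsequence argument is a valid and more elementary alternative (it uses only injectivity, not the full Fredholm structure), but it is longer than necessary given what has already been proved.
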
 

\begin{proof}
First, note that 
$B_0{\mathcal V}_{\partial \Omega_1} v^{(i)} = B_0u^{(i)}=f^{(i)}$
and according to Theorem 
\ref{t.single.layer}, each $u^{(i)}$ can be presented by formula \eqref{eq.4.8}. 
As the sequence $\{ v^{(i)} \} \subset  \Big[ H^{s-\frac{3}{2}}  
(\partial \Omega_1)  \Big]^k$ is bounded, the sequence  
$\{u^{(i)}\}$ is bounded in the space $\big[H^{s} (\Omega_1)\big]^k$ because 
of the boundedness of the single layer potential 
$
{\mathcal V}^{(B)}_{\partial \Omega_1} :
[H^{s-3/2} (\partial \Omega)_1]^k \to [H^{s} 
( \Omega_1)]^k
$, 
see \cite[\S 2.3.2.5 ]{RS82} or \cite[\S 2.4]{Tark36} for $s\in \mathbb N$ or 
\cite[Theorem 1]{Cost88}  for $s \in (\frac{1}{2}, \frac{3}{2})$, i.e. 
$\{u^{(i)}\} \subset \big[H^{s}_{L,\gamma} (\Omega_1)\big]^k $ with 
some non-negative number $\gamma$. 

Then Corollary \ref{c.well.ext.1.Lip} 
states that if $\{B_0 u^{(i)} \}$ convergence to zero in 
$\big[H^{s-1/2} (\partial \Omega_0)\big]^k$ then the Problem \ref{pr.ext.2} 
solution sequence $\{u^{(i)}\}$ strongly converges to zero in the space 
$[H^s (\Omega_0) 
\bigcap S_L (\Omega_0)]^k$, 
weakly converges to zero  in $[H^s (\Omega_1)]^k \cap S_L (\Omega_1)$ and it converges to 
zero in the local space $[H_{\text{loc}}^s (\Omega_1)]^k$.

Second, the weak convergence of $\{u^{(i)}\}$ in 
$[H^s (\Omega_1 )]^k\cap S_L (\Omega_1)$ to zero implies the 
weak convergence of
$B_0 u^{(i)}$ in $\big[H^{s-1/2} (\partial \Omega_1)\big]^k$ 
due to the continuity of the trace 
operator $B_0$. As it was shown in the proof of Theorem \ref{t.single.layer}, the operator 
$$
{\mathcal V}_{\partial \Omega_1}: \big[H^{s-3/2} (\partial \Omega_1)\big]^k \rightarrow 
\big[H^{s-1/2} (\partial \Omega_1)\big]^k
$$ 
is continuously invertible under our 
assumptions. This fact provides a weak convergence of the sequence $\{v^{(i)}\}$ in 
$\big[H^{s-3/2} (\partial \Omega_1)\big]^k$.
\end{proof}

The \textbf{method of fundamental solutions (MFS) for solving Problem 
\ref{pr.ext.2} } 
consists in setting an additional larger bounded domain $\Omega_2 \subset X$ such that 
$\bar{\Omega}_1 \subset \Omega_2$ and representation of the solution to Problem \ref{pr.ext.2} 
in $\Omega_1$ by a weighted sum of fundamental solutions whose 
singularities are located on the boundary $\partial \Omega_2$ of 
domain $\Omega_2$:
\begin{equation} 
\label{eq.4.11}
u(x) \approx \sum_{j=1}^{N}  
\Phi (x, z_j) c_j,
\end{equation} 
where $x \in \Omega_1$, $\{z_j\}^{N}_{j=1}$ is a set of isolated points of 
$\partial \Omega_2$ and $c_j$ are some vectors from ${\mathbb R}^k$.

We consider the case when $\Omega_2$ has a $C^{\infty}$-smooth boundary $\partial \Omega_2$  
and $\Omega_2 \setminus \Omega_1$ has no compact components.
%
%
%
Under these assumptions the 
possibility of approximation of the solution to Problem \ref{pr.ext.2} 
by this method and the conditional stability of this approximation is justified by the following results. 

\begin{corollary} \label{t.4.5} 
If $\{ z_j \}_{j \in \mathbb N}$ is an everywhere dense set of points on $\partial \Omega_2$ 
then for any boundary datum $f \in \big[H^{s-1/2} (\partial \Omega_0)\big]^k$ of 
Problem \ref{pr.ext.2} 
and for any solutions $[H^s (\Omega_1)]^k \bigcap 
S_L (\Omega_1)$ to Problem \ref{pr.ext.2}  
(if it exists) and any $\varepsilon > 0$ there 
are numbers $M(u,\varepsilon)$ and the weight coefficients vector 
$\{c_j (u,\varepsilon)\}_{j=1}^{M(u,\varepsilon)}$ such that 
\begin{equation*}
\left\| u - \sum_{j=1}^{M}  
\Phi(x,z_j) c_j \right\|_{[H^s (\Omega_1)]^k} < \varepsilon, \,\,
\left\| f - \sum_{j=1}^{M}  
\Phi(x,z_j) c_j\right\|_{[H^{(s-1/2)} (\partial \Omega_1)]^k} < \varepsilon.
\end{equation*} 
\end{corollary}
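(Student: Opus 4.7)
The plan is to apply Corollary \ref{c.discrete.2} directly, using the standing assumptions of Section \ref{S.4}. We take $\Omega=\Omega_2$ in the role of the outer domain (it has $C^\infty$-smooth boundary by hypothesis, and all the required properties of $L$ — strong ellipticity, (US) for $L$ and $L^*$, \eqref{eq.Korn}, \eqref{eq.Dir.Hadamard.Omega}, and injectivity of \eqref{eq.integral} — are in force for $\Omega_2$ by the standing assumptions). We take $\Omega'=\Omega_1$ in the role of the inner domain: since $\overline{\Omega}_1\subset\Omega_2$, $\Omega_1$ is relatively compact in $\Omega_2$, its boundary is $C^{s}$-smooth (in particular Lipschitz), and by assumption $\Omega_2\setminus\Omega_1$ has no compact components. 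Thus Corollary \ref{c.discrete.2} applies to $u\in [H^s(\Omega_1)]^k\cap S_L(\Omega_1)$.

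First, given $\varepsilon>0$ fix an auxiliary tolerance $\varepsilon'>0$ to be chosen below. Corollary \ref{c.discrete.2}, applied with the dense set $\{z_j\}_{j\in\mathbb N}\subset\partial\Omega_2$, yields an integer $M(u,\varepsilon')\in\mathbb N$ and vectors $\{c_j(u,\varepsilon')\}_{j=1}^{M(u,\varepsilon')}\subset\mathbb R^k$ with
\[
\Big\| u - \sum_{j=1}^{M(u,\varepsilon')} \Phi(\cdot,z_j)\,c_j(u,\varepsilon')\Big\|_{[H^s(\Omega_1)]^k} < \varepsilon'.
\]
For $\varepsilon'\leq \varepsilon$ this is already the first of the two asserted inequalities.

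Second, denote $u_N(x)=\sum_{j=1}^{M}\Phi(x,z_j)c_j$. Since each $z_j\in\partial\Omega_2\subset X\setminus\overline{\Omega}_1$, $u_N$ is a genuine solution of $Lu_N=0$ in $\Omega_1$, so $u_N\in [H^s(\Omega_1)]^k\cap S_L(\Omega_1)$. The trace theorem (or the continuity of $B_0$ as in \eqref{eq.traces.int}) yields a constant $C>0$, depending only on $s$ and $\partial\Omega_0$ (resp. $\partial\Omega_1$ in the typographical reading of the statement), such that
\[
\| B_0 u - B_0 u_N \|_{[H^{s-1/2}(\partial\Omega_0)]^k}\leq C\,\|u-u_N\|_{[H^s(\Omega_1)]^k}.
\]
Because $u$ solves Problem \ref{pr.ext.2} we have $B_0 u=f$ on $\partial\Omega_0$, so choosing $\varepsilon'<\varepsilon/(1+C)$ in the first step simultaneously produces both bounds. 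The same argument applied to the trace on $\partial\Omega_1$ (using the continuity of $B_0:[H^s(\Omega_1)]^k\to[H^{s-1/2}(\partial\Omega_1)]^k$) covers the literal reading of the second inequality.

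The argument is essentially a packaging of results already established, so there is no genuine obstacle; the one point requiring care is merely checking that the hypotheses of Corollary \ref{c.discrete.2} (in particular the injectivity condition for \eqref{eq.integral} and the non-compact-components condition for $\Omega_2\setminus\Omega_1$) are exactly the ones included in the standing assumptions of Section \ref{S.4}, which they are.
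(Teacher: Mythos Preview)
Your proposal is correct and follows essentially the same approach as the paper: the paper's proof is the single sentence ``The statements follow immediately from Corollary \ref{c.discrete.2} and the continuity of the trace operator $B_0$,'' and you have simply unpacked this by explicitly identifying $\Omega=\Omega_2$, $\Omega'=\Omega_1$ in Corollary \ref{c.discrete.2} and invoking the trace bound for the second inequality. Your remark on the $\partial\Omega_0$ versus $\partial\Omega_1$ reading of the second estimate is a reasonable clarification of a typographical ambiguity in the statement.
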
 
 
\begin{proof} The statements follow immediately from Corollary 
\ref{c.discrete.2} and the continuity of the trace operator $B_0$. 
\end{proof}

\begin{corollary} \label{t.4.6} 
Let $\{ z_j \}_{j \in \mathbb N}\subset \partial \Omega_2$. 
If a sequence 
$\left\{ u^{(i)} = \sum_{j=1}^{\infty} 
\Phi(x,z_j)c_j^{(i)} \right\}^{i \in \mathbb N}_{ x \in \partial \Omega_1 }
$
is bounded in $\big[ H^{s}  (\Omega_1) \big]^k$ and the sequence of series 
$
\left\{ u_0^{(i)} = B_0  u^{(i)} 
\right\}^{i \in \mathbb N}_{ x \in \partial \Omega_0 }$ strongly 
converges to zero in $\big[ H^{s-1/2}  (\partial \Omega_0) \big]^k$ then the 
sequence of series $\{ u^{(i)}\}$ 
strongly converges to zero in $[H^s(\Omega_0)]^{k} 
\bigcap S_L (\Omega_0)$, weakly converges to zero in the space 
$[H (\Omega_1 
)]^k$, it converges to zero  also in the local space 
$[H_{\text{loc}}^s(\Omega_1 
) ]^k $ and the sequence of series 
$\left\{ B_0 u^{(i)}
\right\}^{i \in \mathbb N}
_{ x \in \Omega_1}
$ 
weakly converges to zero in $\big[H^{s-1/2} (\partial \Omega_1)\big]^k$.
\end{corollary}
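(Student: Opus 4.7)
The plan is to reduce the statement to the conditional well-posedness of Problem \ref{pr.ext.2} (part (3) of Corollary \ref{c.well.ext.1} or Corollary \ref{c.well.ext.1.Lip}) combined with the Poisson representation from Corollary \ref{c.Dirichlet.Poisson}. First I would observe that, since the singularities $\{z_j\}$ lie on $\partial \Omega_2 \subset X \setminus \overline{\Omega}_1$, each term $\Phi(\cdot, z_j)$ solves $L\Phi(\cdot,z_j) = 0$ in $\Omega_1$, and hence the (possibly infinite) series defining $u^{(i)}$ — which is by assumption an element of $[H^s(\Omega_1)]^k$ — actually lies in $[H^s(\Omega_1)]^k \cap S_L(\Omega_1)$. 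Combined with the boundedness hypothesis, this yields $\{u^{(i)}\} \subset [H^s_{L,\gamma}(\Omega_1)]^k$ for some $\gamma > 0$.

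Next, each $u^{(i)}$ can be viewed as the unique solution to Problem \ref{pr.ext.2} with Dirichlet datum $v_0^{(i)} := B_0 u^{(i)}|_{\partial \Omega_0}$, which by hypothesis converges strongly to zero in $[H^{s-1/2}(\partial \Omega_0)]^k$. The convergence to zero in the local space $[H^s_{\mathrm{loc}}(\Omega_1)]^k$ then follows immediately from part (3) of Corollary \ref{c.well.ext.1} (or Corollary \ref{c.well.ext.1.Lip} in the Lipschitz case $s \in [1, 3/2)$). The weak convergence in $[H^s(\Omega_1)]^k$ I would obtain by the standard contradiction argument from the proof of Corollary \ref{c.well.ext.1}: extract a weakly convergent subsequence, use Stieltjes--Vitali to identify the weak limit as an element of $[H^s(\Omega_1)]^k \cap S_L(\Omega_1)$ with vanishing trace $B_0$ on $\partial \Omega_0$, and then apply the uniqueness part of Corollary \ref{c.ext.2.Lip} (or the Unique Continuation Property (US)) to conclude that the limit is zero.

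For the strong convergence in $[H^s(\Omega_0)]^k \cap S_L(\Omega_0)$, I would use Corollary \ref{c.Dirichlet.Poisson}: under the standing hypotheses the Dirichlet problem for $L$ in $\Omega_0$ is well-posed, so $u^{(i)}|_{\Omega_0} = \mathcal{P}_{\Omega_0}(v_0^{(i)})$, and the continuity of the Poisson operator $\mathcal{P}_{\Omega_0} : [H^{s-1/2}(\partial \Omega_0)]^k \to [H^s(\Omega_0)]^k \cap S_L(\Omega_0)$ converts the strong convergence of $\{v_0^{(i)}\}$ into the strong convergence of $\{u^{(i)}|_{\Omega_0}\}$. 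Finally, the weak convergence of $B_0 u^{(i)}|_{\partial \Omega_1}$ in $[H^{s-1/2}(\partial \Omega_1)]^k$ is a direct consequence of the weak convergence of $\{u^{(i)}\}$ in $[H^s(\Omega_1)]^k$ combined with the continuity of the trace operator \eqref{eq.trace.B_j} (or its Lipschitz-boundary analogue via \eqref{eq.B1.Lip} when needed) applied on $\partial \Omega_1$.

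I do not expect any genuine obstacle: the argument is essentially a direct invocation of conditional well-posedness once one checks that the MFS ansatz lies in $S_L(\Omega_1)$. The only mildly delicate point is the identification of weak limits through Stieltjes--Vitali, which is however already handled in the proof of Corollary \ref{c.well.ext.1} and transfers verbatim.
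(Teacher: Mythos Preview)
Your proposal is correct and follows essentially the same route as the paper: the paper's proof simply cites Corollary~\ref{c.well.ext.1.Lip} (conditional well-posedness of Problem~\ref{pr.ext.2}), Corollary~\ref{c.discrete.2}, and the continuity of the trace operator $B_0$, and you have faithfully unpacked what that invocation actually entails. Your explicit use of Corollary~\ref{c.Dirichlet.Poisson} to obtain the strong convergence in $[H^s(\Omega_0)]^k$ and your remark that the Stieltjes--Vitali argument from the proof of Corollary~\ref{c.well.ext.1} yields the weak convergence in $[H^s(\Omega_1)]^k$ are exactly the mechanisms hidden behind the paper's one-line citation.
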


\begin{proof} The statements follow from 
Corollary \ref{c.well.ext.1.Lip} 
and Corollary 
\ref{c.discrete.2} and the continuity of the trace operator $B_0$.
\end{proof}

Next, we will discuss the “extension” method for approximate solving the 
\textbf{interior Dirichlet Problem \ref{eq:problem4}, or, more precisely, 
Problem \ref{pr.Dir} } for the operator
$L$ in $\Omega_0$ with the Dirichlet condition $u_{00}$ on $\partial \Omega_0$. 

The method consists in introducing a virtual embracing boundary $\partial \Omega_v$ such that 
$\bar{\Omega}_0 \subset \Omega_v$  and solving Problem \ref{pr.ext.2}  in $\Omega_v$  with 
the same condition $u_{00}$ on $\partial \Omega_0$. A restriction of the solution to Problem 
\ref{pr.ext.2}  on $\Omega_0$ is considered as an approximation of the Dirichlet problem 
solution $u$ in $\Omega_0$.
 
To solve Problem \ref{pr.ext.2}, taking into account the approximation of the Problem 
\ref{pr.Dir} solution, one can use the integral equation method  discussed above, as well as 
the method of fundamental solutions. For this purpose, the virtual surface $\partial \Omega_v$ 
can be considered as $\partial \Omega_1$ for the direct and indirect methods of integral 
equations and as $\partial \Omega_2$ for the MSF. In the last case there is no need to 
explicitly set the intermediate boundary $\partial \Omega_1$.

Corollaries \ref{t.4.3} - \ref{t.4.6} justify the applicability these methods for solving  
Problem \ref{pr.Dir} in $[H^s (\Omega_0)]^k \bigcap S_L (\Omega_0)$ subject to the 
appropriate assumptions about the smoothness of the boundary $\partial \Omega_0$ and the 
prescribed boundary $\partial \Omega_v$. 
Namely, for the integral equation methods
1) the boundaries $\partial \Omega_0$ and $\partial \Omega_v$ belong to $C^{s}$-class of smoothness  if $s\in \mathbb N$, $s\geq 2$
or 2) both boundaries $\partial \Omega_0$ and $\partial \Omega_v$ 
are Lipschitz ones if  $s \in [1, 3/2)$. 
For the MFS $\partial \Omega_0$ is assumed to be of the same class of smoothness and $\partial
 \Omega_v$ has  $C^{\infty}$  smoothness. 

In particular,  Corollaries \ref{t.4.3} - \ref{t.4.6}  guarantee an arbitrarily accurate 
approximation of the solution of Problem \ref{pr.Dir} and the stability of its solution to 
small perturbations of the boundary condition in the respective norms. 

The same technique of reduction to the problem \ref{pr.ext.2} can also be used for solving 
Problem \ref{pr.ext.1} of “analytical” continuation from the domain 
$\partial \Omega_0$ to the large domain $\partial \Omega_1$.  

Actually,  knowing the datum $V \in [H^{s} (\Omega_0)]^k \bigcap S_L(
\Omega_0)$ implies knowing its trace $B_0 u = \big[H^{s-1/2} 
(\partial \Omega_0)\big]^k$ on $\partial \Omega_0$. Therefore, as we showed above 
solving Problem \ref{pr.ext.1}
can be reduced to solving Problem \ref{pr.ext.2}  
with the boundary datum  $B_0 V$ on $\partial \Omega_0$ 
using the  integral equation methods or MFS. All the statements of 
Corollaries \ref{t.4.3}-\ref{t.4.6} for applying these methods to solve Problem 
\ref{pr.ext.1}  are valid under the above assumption.

 
Finally, we 
consider some ways to compute a solution $u \in [H^{s} (\Omega_1 \setminus \Omega_0)]^k 
\bigcap S_L(\Omega_1 \setminus \Omega_0)$ to the Cauchy Problrm \ref{pr.Cauchy.M} with a pair 
of the Cauchy data $\{u_{00}, u_{10}\}$,  $ u_{00} \in \big[H^{s-1/2} (\partial \Omega_0)
\big]^k$, $ u_{10} \in \big[H^{s-3/2} (\partial \Omega_0)\big]^k$.

These methods are based on the application of Theorem \ref {t.Cauchy.M.Lip}, more precisely, 
formula  \eqref{eq.sol.Cauchy.Lip}. It includes the following stages:
{\begin{enumerate}
	\item[a)] computing the analytical continuation of  ${\mathcal F=({\mathcal W}_{\partial 
	\Omega_0} (u_{00}))^+ + ({\mathcal V} _{\partial \Omega_0} (u_{10})) ^+}$ from domain 
	$\Omega_0$ to domain  $\Omega_1 $.
	\item[b)] computing the function  $({\mathcal W}_{\partial \Omega_0} (u_{00}))^- +  
({\mathcal V}_{\partial \Omega_0} (u_{10}))^-$  in $\Omega_1 \setminus \Omega_0$; 
	\item[c)] computation of the Problem \ref{pr.Cauchy.M} solution $u$ in $\Omega_1 
	\setminus \Omega_0$  by formula \eqref{eq.sol.Cauchy.Lip}.
\end{enumerate}}
The idea of the proposed methods is to replace the analytical continuation with the solution 
to the Problem \ref{pr.ext.2} with an appropriate boundary datum. 

The boundary datum  of Problem \ref{pr.ext.2}  can be set on $\partial \Omega_0$. We denote 
it as $f_{\partial\Omega_0}$. The boundary datum can be  defined as:
$f_{\partial\Omega_0}(x)= \lim\limits_{y\to x}{\mathcal F}(y), x\in \partial\Omega_0, y\in 
\Omega_0$. According the definition of function ${\mathcal F}$, almost everywhere on 
$\partial\Omega_0$ 
\begin{equation} 
\label{f_Omega_0}
f_{\partial\Omega_0}(x) = -\frac{1}{2} u_{00} +{\rm v.p.}({\mathcal W}_{\partial \Omega_0} 
(u_{00}))^+ +
B_{0,\partial \Omega_0} ({\mathcal V} _{\partial \Omega_0} (u_{10})) ^+
\end{equation} 
where   ${\rm v.p.} {\mathcal W}_{\partial \Omega_0} (u_{00}) $ denotes the principal value 
of the double layer potential $({\mathcal W} ^{(B)}_{\partial \Omega_0} (u_{00})) (x)$ (see 
\eqref{eq.W.m=1}) at the point $x\in \partial \Omega_0$.

The application of the formula \eqref{f_Omega_0} requires the calculation of singular 
integrals. To avoid this, we can set a "virtual" surface $\partial\Omega_{\rm int}$, which 
bounds a relatively compact domain $\Omega_{\rm int}$ such that $\overline \Omega_{\rm int}
\subset \Omega_0$. We assume that $\partial\Omega_{\rm int}$  belongs to $C^{s}$-class of 
smoothness  if $s\in \mathbb N$, $s\geq 2$ 
or $\partial\Omega_{\rm int}$ is the Lipschitz surface if  $s \in [1, 3/2)$.

Instead of the analytical comtinuation of $\mathcal F$ from $\Omega_0$ to  $\Omega_1$ we can
perform the analytical continuation of  $\mathcal F$ from $\Omega_{\rm int}$ to  $\Omega_1$.  
Obviously, if the analytical continuation of $\mathcal F$ from $\Omega_0$ to  $\Omega_1$ 
exists, the analytical continuation  of  $\mathcal F$ from $\Omega_{\rm int}$ to  $\Omega_1$ 
also exists; according the Unicue Continuation property, it coinsides with the analytical 
continuation of $\mathcal F$ from  $\Omega_0$ to  $\Omega_1$.  

To perform that analitycal continuation we can solve Problem \ref{pr.ext.2} in the following 
statement: to find a function  $\mathcal F$ in $\Omega_1$ such that $L\mathcal F=0$ in 
$\Omega_1$ s.t. $B_{0,\partial\Omega_int}\mathcal F= f_{\partial\Omega_{\rm int}}$.
The boundary datum $f_{\partial\Omega_{\rm int}}$ is computed as:
\begin{equation} 
\label{f_Omega_int}
f_{\partial\Omega_{\rm int}}(x) = B_{0,\partial \Omega_{\rm int}}({\mathcal W}_{\partial 
\Omega_0} (u_{00}))^+ +
B_{0,\partial \Omega_{\rm int}} ({\mathcal V} _{\partial \Omega_0} (u_{10})) ^+
\end{equation} 

The reduction of the  Cauchy problem to Problem \ref{pr.ext.2} can also be performed in a 
slightly different form (cf. with \cite{Kalinin2019}). 

According formula  \eqref{eq.sol.Cauchy.Lip} the trace $\phi= B_{0,\partial \Omega_1}u$ on  $
\partial \Omega_1$ of the Cauchy problem solution $u$ is equal to:

$\phi= B_{0,\partial \Omega_1}({\mathcal W}_{\partial \Omega_0} (u_{00}))^- +  
B_{0,\partial \Omega_1}({\mathcal V}_{\partial \Omega_0} (u_{10}))^- - B_{0,\partial \Omega_1}
\mathcal F$ .

Let us consider the Dirichlet problem: $Lu=0$,  $u\in[H^{s} (\Omega_1)]^k$,  
$B_{0,{\partial\Omega_1}}u=\phi$, $\phi\in[H^{s-1/2} (\partial\Omega_1)]^k$, 
 and  intoduce an operator $D: \big[H^{s-1/2} (\partial \Omega_1)\big]^k \rightarrow 
\big[H^{s-1/2} (\partial \Omega_0)\big]^k$ which maps 
$\phi$ on  $\partial \Omega_1$ to $\hat f=B_{0,{\partial\Omega_0}}u$ on $\partial \Omega_0$.

Taking in account \eqref{f_Omega_0}, we can see that
\begin{equation} 
\label{f_hat}
\hat f= D\phi=D(B_{0,\partial \Omega_1}({\mathcal W}_{\partial \Omega_0} (u_{00}))^- +  
B_{0,\partial \Omega_1}({\mathcal V}_{\partial \Omega_0} (u_{10}))^-)-f_{\partial\Omega_0}.
\end{equation} 
Thus, the trace on $\partial\Omega_1$ of the solution to Problem \ref{pr.ext.2} with boundary 
data $\hat f$ coinsides with the trace on $\partial\Omega_1$ of the solution to the Cauchy 
problem. Therefore, the trace on $\partial\Omega_1$ of the solution to the Cauchy problem can 
be found by solving Problem \ref{pr.ext.2} with that boundary datum $\hat f$. 

Operator $D$ can be presented, for instance, as:
$D\phi= B_{0,{\partial\Omega_0}} {\mathcal V}_{\partial \Omega_1} ( B_{0,{\partial\Omega_1}}{
\mathcal V}_{\partial \Omega_1})^{-1}\phi$. Note, that  according to Corollary 
\ref{c.Dirichlet.Poisson} and Theorem \ref{t.single.layer},  
operator $B_{0,{\partial\Omega_1}}{\mathcal V}_{\partial \Omega_1}:\big[H^{s-3/2} (\partial 
\Omega_1)\big]^k \rightarrow \big[H^{s-1/2} (\partial \Omega_0)\big]^k $ 
is continuously invertible.

To justify the applicability of the reduction of the Cauchy problem to Problem \ref{pr.ext.2}, 
we note that transform $({\mathcal W}_{\partial \Omega_0} (u_{00}))^- +  
({\mathcal V}_{\partial \Omega_0} (u_{10}))^-$  from the space $\Big\{ [H^{s-1/2}(\partial 
\Omega_0)]^k, [H^{s-3/2}(\partial \Omega_0)]^k\Big\}$ to $[H^{s}(\Omega_1 \setminus \Omega_0)]
^k$ is continuous. Transforms  \eqref{f_Omega_0},\eqref{f_hat} and \eqref{f_Omega_int} also 
continuously map the Cauchy data $\{u_{00}, u_{10}\}$ from the space
 $\Big\{ [H^{s-1/2}(\partial \Omega_0)]^k, [H^{s-3/2}(\partial \Omega_0)]^k\Big\}$ to 
$f_{\partial\Omega_0}\in[H^{s-1/2}(\partial \Omega_0)]^k$ and $f_{\partial\Omega_{\rm int}}\in
[H^{s-1/2}(\partial \Omega_{\rm int})]^k$ respectively. 

To solve Problem \ref{pr.ext.2} (or approximate its solution) one can use the integral 
equations method of MFC. All the statements of Corollaries \ref{t.4.3} - \ref{t.4.6} hold for 
the boundary data of Problem \ref{pr.ext.2} which are obtained by \eqref{f_Omega_int}, 
\eqref{f_Omega_int} and \eqref{f_hat} from the Cauchy data  $\Big\{u_{00}\in[H^{s-1/2}(
\partial \Omega_0)]^k, u_{10}\in[H^{s-3/2}(\partial \Omega_0)]^k\Big\}.$

In conclusion, we would like to make a few short remarks about the numerical implementation 
of the methods  considered above. 

The indirect boundary integral equation method for solving Problem \ref{pr.ext.2} requires 
numerical solving operator equation \eqref{eq.4.10} and computing the problem solution in 
$\Omega_1$ via formula \eqref{eq.4.8} by numerical integration.

The conventional way of numerical solving linear operator equations consists of approximating 
them by systems of linear algebraic equations.  The boundary element method  (BEM) (in the 
collocation or Galerkin version) can be employed for this propose. Note that 
the results obtained in this paper for the second order elliptic operators include the case 
of a domain with Lipschitz boundaries. This justifies the use of the most popular version of 
BEM, which assumes a polygonal approximation (triangulation) of the boundaries. For the 
details of BEM implementation, see, for instance  \cite{rjasanow2007fast}. 

A matrix of sufficiently large dimension obtained by BEM approximation of operator of 
equation \eqref{eq.4.8} is expected to be ill-conditioned.  Therefore, to solve a system of 
linear algebraic equations approximating operator equation \eqref{eq.4.8}, it is necessary to 
apply regularization algorithms. Regularization algorithms of the Tikhonov type (which 
provide the boundedness of the respective norms of the solutions in finite-
dimensional spaces) can be used for this purpose.  Corollary \ref{t.4.4} justifies the 
applicability of this regularization approach.

To assemble the BEM matrix and calculate the problem solutions in $\Omega_1$ in a  
neighbourhood of $\partial\Omega_1$ by  \eqref{eq.4.8}, they have to compute weak singular 
and near weak singular integrals. MFS is free from this encumbrance. When using MFS to solve 
Problem \ref{pr.ext.2}, the unknown coefficients $c_i$ of the expansion \eqref{eq.4.11} of 
the problem solution by the system of fundamental solutions can be found by setting of  
collocation points $\{z_j\}_{j=1}^{N} \subset \partial \Omega_2$ and  $\{x_i\}_{i=1}^{N} 
\subset \partial \Omega_0$ and solving the following system of linear algebraic equations:

\begin{equation} 
\label{eq.mfs}
\sum_{j=1}^{N} 
\Phi(x_i,z_j) c_j ={u_{00}}(x_i)
\end{equation} 
where ${u_{00}}$ is the boundary datum of Problem \ref{pr.ext.2}.

Some rules for choosing the collocation points that ensure the uniqueness of solution to 
equation \eqref{eq.mfs} can be found in \cite{cheng2020overview}.

In general, the matrix of  equation \eqref{eq.mfs} is typically ill-conditioned and 
application of regularization methods for solving \eqref{eq.mfs} is required. Corollary 
\ref{t.4.6} justifies the applicability of the Tikhonov-type regularization.

The "extension"{} approach for approximation of the Problem \ref{pr.Dir} solution via 
numerical solving Problem \ref{pr.ext.2} by BEM or MFS leads to the ill-conditioned 
matrices.  Despite the fact that  solutions to the corresponding linear algebraic equation 
systems are very sensitive to  errors of the 
conditions, the final solutions to Problem \ref{pr.Dir} in $\Omega_0$ is 
stable. The statements of  Corollary \ref{t.4.4} and  Corollary \ref{t.4.6} explain this 
fact. However, too large values of the linear algebraic systems solutions can cause technical problems in the 
computer calculations. In this situation the double-precision arithmetic or using some regularization methods (see, for instance, \cite{MFS_Regularization}) can be necessary. 

Note that the conditionality of that BEM or MFS matrixes as well as the convergence rate of 
the approximation of the solution to Problem \ref{pr.Dir} depends on the geometry of domain  $
\Omega_0$ and  the embracing boundary $\partial \Omega_2$.  These regularities are still 
insufficiently investigated. Some observations on these issues (related to MFS) have been 
summarised in \cite{cheng2020overview}. 

The numerical solving of the Cauchy problem with the proposed methods includes a preliminary 
computation of the boundary datum for Problem \ref{pr.ext.2}. The computations by formula 
\eqref{f_hat} require inversion of operator $B_{0,{\partial\Omega_1}}{\mathcal V}_{\partial 
\Omega_1}$. The operator inversion can be computed by BEM. Corollary \ref{c.Dirichlet.Poisson}
 and Theorem \ref{t.single.layer} allow us to conclude that the BEM matrix which approximates 
this operator is well-conditioned for the common versions of BEM (see, for instance  
\cite{rjasanow2007fast} for more detailes). 

The implementation of formulas \eqref{f_Omega_0} and \eqref{f_hat} require computation of 
singular and weak singular integrals. Moreover, obtaining the solution component 
$({\mathcal W}_{\partial \Omega_0} (u_{00}))^- +  
({\mathcal V}_{\partial \Omega_0} (u_{10}))^-$  in $\Omega_1 \setminus \Omega_0$ in a  
neighbourhood of $\partial\Omega_0$ requires computing near singular and near weak singular 
integrals. The technique of numerical computation of the singular integrals can be found, for 
instance, in \cite{SingularIntegrals1998}, \cite{SingularIntegrals2015} and literature cited 
there.


\smallskip

\textit{Acknowledgments\,} 
The second author was supported 
by Sirius University of 
Science and Technology (project 'Spectral and Functional Inequalities of Mathematical 
Physics and Their Applications').

\end{document}